\numberwithin{equation}{section}
\newtheorem{theorem}{Theorem}[section]
\newtheorem{thmA}{Theorem}
\newtheorem{corollary}[theorem]{Corollary}
\newtheorem{lemma}[theorem]{Lemma}
\newtheorem{remark}{Remark}
\newtheorem{conjecture}{Conjecture}
\newenvironment{enumerateroman}{\begin{enumerate}[\textup{(}i\textup{)}] }{\end{enumerate}}
\begin{document}

\title[Mean curvature flow and differentiable sphere theorem]
{Mean curvature flow of arbitrary codimension in spheres and sharp
differentiable sphere theorem}

\author{Li Lei and Hongwei Xu}
\address{Center of Mathematical Sciences \\ Zhejiang University \\ Hangzhou 310027 \\ China\\}
\email{lei-li@zju.edu.cn; xuhw@zju.edu.cn}
\date{}
\keywords{Mean curvature flow, submanifolds, convergence theorem,
differentiable sphere theorem, classification theorem.}
\subjclass[2010]{53C44; 53C40; 53C20; 58J35}
\thanks{Research supported by the National Natural Science Foundation of China, Grant Nos. 12071424, 11371315}

\begin{abstract}
 In this paper, we investigate Liu-Xu-Ye-Zhao's conjecture {\cite{liu2011extension}} and prove a sharp convergence theorem for the mean curvature flow of arbitrary codimension in spheres which improves
 the convergence theorem of Baker {\cite{baker2011mean}} as well as the differentiable sphere theorems of Gu-Xu-Zhao
{\cite{MR3005061,XuGu2010,MR2550209}}.
\end{abstract}

{\maketitle}

\section{Introduction}
It plays an important role in differential geometry to study
pinching theory in global differential geometry and geometric
analysis. Motivated by geometrical and topological pinching results
on submanifolds, we will investigate the convergence theorem for the
mean curvature flow and the differentiable sphere theorem for
submanifolds under a sharp pinching condition. Let
$\mathbb{F}^{n+q}(c)$ be an $(n+q)$-dimensional simply connected
space form of constant curvature $c$, and $M^{n}$ an
$n(\geq2)$-dimensional submanifold in $\mathbb{F}^{n+q}(c)$. Denote
by $H$ and $h$ the mean curvature vector and the second fundamental
form of $M$, respectively. Assume that $|H|^{2}+4( n-1 ) c\ge0$. We
define
\begin{equation}
  \alpha (n,|H|,c ) =n c+ \frac{n}{2 ( n-1 )} |H|^{2}- \frac{n-2}{2 ( n-1 )} \sqrt{|H|^{4}
  +4 ( n-1 ) c |H|^{2}} ,
\end{equation}
and $\alpha_{1} (n,|H|)= \alpha (n,|H|,1)$.

Since 1973, Okumura \cite{MR0317246,MR0353216}, Yau \cite{Yau} and
many other authors tried to generalize the famous
Simons-Lawson-Chern-do Carmo-Kobayashi rigidity theorem
{\cite{MR0273546,MR0238229,MR0233295}} to the case where $M$ is a
closed submanifold with parallel mean curvature in a sphere, and got
partial results. In \cite{Xu1}, Xu proved the generalized
Simons-Lawson-Chern-do Carmo-Kobayashi theorem for closed
submanifolds with parallel mean curvature in a sphere. The following
refined version of the generalized Simons-Lawson-Chern-do
Carmo-Kobayashi theorem was obtained by Li-Li {\cite{MR1161925}} for
$H=0$ and by Xu {\cite{MR1241055}} for $H\neq0$.
\begin{thmA}\label{rigidity}
Let $M$ be an $n$-dimensional oriented compact submanifold with
parallel mean curvature in the unit sphere $\mathbb{S}^{n+q}$. If $|
h |^{2} \leq C(n,q,|H|),$ then $M$ is either congruent to a round
sphere, a Clifford hypersurface in an $(n+1)$-sphere, or the
Veronese surface in a $4$-sphere. Here $C(n,q,|H|)$ is defined by
\begin{equation*}  C(n,q,|H|) = \left\{\begin{array}{ll}
     \alpha_{1} (n, | H | ) , & q=1,  \operatorname{or}  q=2
      \operatorname{and}  | H | \neq 0,\\
     \min \left\{ \alpha_{1} (n, | H | ) , \frac{2n}{3} + \frac{5}{3n} | H |^{2}
     \right\} , &  \operatorname{otherwise}.
   \end{array}\right. \end{equation*}
   \end{thmA}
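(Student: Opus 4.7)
The plan is a Bochner-type maximum principle argument for the traceless second fundamental form, combined with sharp algebraic inequalities for the norms of shape-operator commutators and traces, followed by a classification of the equality case. Since $H$ is parallel, $|H|$ is constant on $M$, and we may choose a parallel local orthonormal normal frame $\{e_{n+1},\ldots,e_{n+q}\}$ with $e_{n+1}$ in the direction of $H$. Let $A_{\alpha}$ be the shape operator associated to $e_{\alpha}$, and let $\Phi_{\alpha}=A_{\alpha}-\tfrac{1}{n}(\operatorname{tr}A_{\alpha})I$ be its traceless part, so that $|\Phi|^{2}=|h|^{2}-|H|^{2}$.

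First I would derive the Simons-type identity for $|\Phi|^{2}$ on a submanifold of $\mathbb{S}^{n+q}$. Combining Simons' formula with the Codazzi and Ricci equations and with $\nabla^{\perp}H=0$ kills the usual $\langle h,\nabla^{2}H\rangle$ term and yields a pointwise expression of the form
\begin{equation*}
\tfrac{1}{2}\Delta|\Phi|^{2}=|\nabla\Phi|^{2}+n|\Phi|^{2}+\operatorname{tr}(A_{n+1})\sum_{\alpha}\operatorname{tr}\bigl(A_{n+1}\Phi_{\alpha}^{2}\bigr)-\sum_{\alpha,\beta}\Bigl(|[A_{\alpha},A_{\beta}]|^{2}+(\operatorname{tr}A_{\alpha}A_{\beta})^{2}\Bigr).
\end{equation*}

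Next I would control the quartic and cubic terms by sharp algebraic inequalities chosen according to the codimension. For $H=0$ this is the Li--Li inequality $\sum_{\alpha,\beta}(|[A_{\alpha},A_{\beta}]|^{2}+(\operatorname{tr}A_{\alpha}A_{\beta})^{2})\le\tfrac{3}{2}|\Phi|^{4}$ \cite{MR1161925}, with equality only for the Veronese algebra; for $q=1$ or $q=2$, $H\neq 0$ one uses the Okumura--Xu refinement \cite{MR1241055}, which gives the sharper $\alpha_{1}(n,|H|)$-bound after diagonalising $A_{n+1}$ and applying a Lagrange multiplier estimate on the eigenvalues; and for $q\ge 3$, $H\neq 0$ one uses the Chen--Xu style inequality that produces the $\tfrac{2n}{3}+\tfrac{5}{3n}|H|^{2}$-coefficient. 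Inserting these and re-expressing the pinching hypothesis $|h|^{2}\le C(n,q,|H|)$ in terms of $|\Phi|^{2}$ via $|h|^{2}=|\Phi|^{2}+|H|^{2}$ delivers exactly the pointwise inequality
\begin{equation*}
\tfrac{1}{2}\Delta|\Phi|^{2}\ \ge\ |\nabla\Phi|^{2}+|\Phi|^{2}\,Q(|\Phi|,|H|,n,q),\qquad Q\ge 0.
\end{equation*}

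Finally, since $M$ is closed, $\int_{M}\Delta|\Phi|^{2}=0$ forces both $\nabla\Phi\equiv 0$ and $Q|\Phi|^{2}\equiv 0$. If $|\Phi|\equiv 0$ then $M$ is totally umbilic and hence a round sphere. Otherwise, equality must hold in every algebraic inequality invoked above: tracing back these equality cases, either all shape operators orthogonal to $H$ vanish and $A_{n+1}$ has exactly two distinct eigenvalues (which with $\nabla\Phi=0$ and Lawson's classification identifies $M$ as a Clifford hypersurface in a totally geodesic $\mathbb{S}^{n+1}$), or $n=q=2$, $H=0$, and the $A_{\alpha}$ realise the Veronese algebra, giving the Veronese surface in $\mathbb{S}^{4}$. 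The main obstacle is the sharp algebraic inequality in the mixed regime $q\ge 3$, $H\neq 0$ with the correct $|H|$-dependence in the cubic coupling term, which is precisely why the constant $C(n,q,|H|)$ in the statement is written as a minimum of two competing expressions reflecting the two different sharp algebraic bounds available in this regime.
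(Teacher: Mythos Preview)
The paper does not actually prove this statement. Theorem~A is quoted in the introduction as a known rigidity theorem, attributed to Li--Li \cite{MR1161925} for $H=0$ and to Xu \cite{MR1241055} for $H\neq 0$; no argument for it appears anywhere in the body of the paper. So there is no ``paper's own proof'' to compare against.

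That said, your outline is broadly the correct strategy used in the original sources: Simons' identity for the traceless second fundamental form, sharp algebraic control of the reaction terms, integration over $M$, and analysis of the equality case. A couple of slips are worth flagging. First, with the paper's convention $H=\sum_i h(e_i,e_i)$ (undivided trace), the traceless part satisfies $|\Phi|^2=|h|^2-\tfrac{1}{n}|H|^2$, not $|h|^2-|H|^2$; this matters when you translate the pinching hypothesis into a condition on $|\Phi|$. Second, in the higher-codimension non-minimal case the constant $\tfrac{2n}{3}+\tfrac{5}{3n}|H|^2$ does not come from a separate ``Chen--Xu'' inequality; it arises from applying the Li--Li $\tfrac{3}{2}$-inequality to the full system of shape operators together with the cubic mean-curvature coupling, as in \cite{MR1241055}. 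Finally, your Simons formula as written is not quite correct in sign and structure (compare equation~\eqref{lapho2} in the paper, specialized to $\nabla H=0$): the quartic term should read $-R_1+R_3$ in the paper's notation, and $R_3$ is what produces the cubic $|H|\cdot\operatorname{tr}(\Phi_1\Phi_\alpha^2)$ coupling you want, while $R_1$ contains both the commutator and the $(\operatorname{tr}A_\alpha A_\beta)^2$ pieces. Getting these signs and groupings right is exactly where the sharp constants $\alpha_1(n,|H|)$ and $\tfrac{2n}{3}+\tfrac{5}{3n}|H|^2$ emerge, so this part of the argument needs to be written out carefully rather than sketched.
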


In {\cite{MR0324529}}, Lawson and Simons proved that if $M^{n}$ ($n
\ge 5$) is an oriented compact submanifold in $\mathbb{S}^{n+q}$
satisfying $| h |^{2} <2 \sqrt{n-1}$, then $M$ is homeomorphic to
$\mathbb{S}^{n}$. More generally, Shiohama and Xu {\cite{MR1458750}}
proved the optimal topological sphere theorem.

\begin{thmA}Let $M^{n}$ ($n
\ge 4$) be an oriented complete submanifold in $\mathbb{F}^{n+q} ( c
)$ with $c\ge0$. If $\sup_{M} ( | h |^{2} - \alpha ( n,|H|,c ) )
<0$, then $M$ is homeomorphic to $\mathbb{S}^{n}$.\end{thmA}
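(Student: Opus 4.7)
The plan has two main analytic ingredients: a Ricci curvature estimate forcing compactness, and the Lawson--Simons second variation formula for integer currents forcing vanishing of middle-dimensional homology. The Gauss equation
\begin{equation*}
\mathrm{Ric}(X,X)=(n-1)c+n\langle H,h(X,X)\rangle-\sum_{i=1}^n|h(X,e_i)|^2,
\end{equation*}
combined with the strict pinching $|h|^2<\alpha(n,|H|,c)$ holding uniformly on $M$, produces a positive lower bound on the Ricci curvature. Since $M$ is complete, Myers' theorem then forces $M$ to be compact with $\pi_1(M)$ finite. Next, for each $1\leq p\leq n-1$, I apply the Lawson--Simons second variation formula to an arbitrary integer $p$-current on $M$: tracing the second variation over an orthonormal basis of the complement of a $p$-plane and summing yields a constant-curvature contribution of order $p(n-p)c$ plus a quadratic expression in $h$. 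The function $\alpha(n,|H|,c)$ is precisely the sharp algebraic threshold whose strict inequality $|h|^2<\alpha$ makes this traced second variation strictly negative for every $p$-plane. Therefore no nontrivial stable integer $p$-current exists on $M$, and Federer's theorem on mass-minimizing representatives yields $H_p(M,\mathbb{Z})=0$ for every $1\leq p\leq n-1$.

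To promote this homological vanishing to a topological sphere theorem, I pass to the universal cover $\tilde M\to M$. The pinching is pointwise and so lifts to $\tilde M$, and $\tilde M$ remains compact since $\pi_1(M)$ is finite. Applying the previous step to $\tilde M$ shows it is a compact simply connected integer homology $n$-sphere, hence a homotopy $n$-sphere by Hurewicz together with Whitehead's theorem. The generalized Poincar\'e conjecture (Freedman for $n=4$, Smale for $n\geq5$) then identifies $\tilde M\cong\mathbb{S}^n$. The deck transformation group $\pi_1(M)$ acts freely on $\mathbb{S}^n$; combining this with $H_k(M,\mathbb{Z})=0$ for $1\leq k\leq n-1$ and standard arguments---Synge's theorem in the even-dimensional case, together with a cohomological analysis of free actions of finite groups on $\mathbb{S}^n$ via the Lyndon--Hochschild--Serre spectral sequence in the odd-dimensional case---rules out every nontrivial $\pi_1(M)$ in the range $n\geq4$, yielding $M\cong\mathbb{S}^n$.

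The principal obstacle is the sharp algebraic extremization identifying $\alpha(n,|H|,c)$ as the critical threshold in the Lawson--Simons second variation. Because the pinching hypothesis is sharp, the optimization of the traced second variation over the Grassmannian of $p$-planes and over choices of deformation normal fields must be performed without any slack, and the factor $\sqrt{|H|^4+4(n-1)c|H|^2}$ appearing in the definition of $\alpha$ will emerge from solving the associated constrained optimization problem relating $|H|^2$ and $|h|^2$ against the ambient curvature $c$.
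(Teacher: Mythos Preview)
The paper does not prove Theorem~B; it is quoted from Shiohama--Xu \cite{MR1458750} as background, so there is no in-paper proof to compare against. Your outline is indeed the standard route taken there: the pinching forces a uniform positive lower Ricci bound (this is exactly Proposition~2 of \cite{MR1458750}, invoked again in Lemma~\ref{ric} of the present paper), Myers gives compactness and finite $\pi_1$, and the Lawson--Simons second-variation criterion kills $H_p(M,\mathbb{Z})$ for $1\le p\le n-1$; one then lifts to the universal cover and invokes the generalized Poincar\'e conjecture.

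Two points in your write-up need repair. First, Synge's theorem is not available: the paper explicitly notes (Remark after Theorem~\ref{theo1}) that the condition $|h|^2<\alpha(n,|H|,c)$ does \emph{not} force positive sectional curvature, only positive Ricci. For even $n$ the correct replacement is that $M$ is oriented, so the deck group acts orientation-preservingly on $\tilde M\cong\mathbb{S}^n$, and the Lefschetz fixed-point theorem (or the identity $\chi(\tilde M)=|\pi_1(M)|\,\chi(M)$ with $\chi(M)=2$) forces $\pi_1(M)=1$. Second, for odd $n$ your appeal to the Lyndon--Hochschild--Serre spectral sequence is too vague. What one actually needs is that a nontrivial finite group $G$ acting freely and orientation-preservingly on $\mathbb{S}^n$ has periodic Tate cohomology of minimal period $d\mid n+1$, and $H_{d-1}(G,\mathbb{Z})\supset\mathbb{Z}/|G|\ne 0$; combined with the Lawson--Simons vanishing $H_k(M,\mathbb{Z})=H_k(G,\mathbb{Z})=0$ for $1\le k\le n-1$ this forces $d=n+1\ge 6$, whereas the classification of perfect finite groups with periodic cohomology shows all such groups have period $4$. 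This is where the hypothesis $n\ge 4$ genuinely enters (the Poincar\'e homology $3$-sphere shows the step fails at $n=3$).
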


In \cite{MR2550209}, the second author and Zhao initiated the study
of differentiable pinching problem on submanifolds of arbitrary
codimension. Making use of the convergence results of Hamilton and
Brendle for Ricci flow and the Lawson-Simons formula for the
nonexistence of stable currents, Xu and Zhao \cite{MR2550209} proved
the following.

\begin{thmA} Let $M$ be an $n$-dimensional ($n \ge 4$)
oriented complete submanifold in the unit sphere $\mathbb{S}^{n+q}$.
Then\\
\hspace*{2mm}$(i)$ if $n=4,5,6$ and $\sup_M(| h |^2-\alpha_1(n,|H|))<0$, then $M$ is diffeomorphic to $\mathbb{S}^{n}$;\\
\hspace*{2mm}$(ii)$ if $n\geq7$ and $|h|^2<2\sqrt{2}$, then $M$ is
diffeomorphic to  $\mathbb{S}^{n}$.\end{thmA}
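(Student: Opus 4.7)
The plan is to bridge the extrinsic pinching hypothesis on $|h|^2$ to an intrinsic curvature positivity condition and then invoke the convergence theorems of Hamilton and Brendle for the Ricci flow; the low- and high-dimensional regimes are treated slightly differently, reflecting how sharp an algebraic inequality is needed.

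First I would handle compactness and the topological sphere conclusion. Via the Gauss equation, either hypothesis yields a uniform positive lower bound on the Ricci curvature of $M$, so by Myers' theorem $M$ is compact with finite fundamental group; alternatively, Theorem~B above, applied with $c=1$, directly gives that $M$ is homeomorphic to $\mathbb{S}^n$. For case $(ii)$ one checks that $|h|^2<2\sqrt{2}$ together with $n\geq 7$ implies the Lawson--Simons hypothesis $|h|^2<2\sqrt{n-1}$, so the same topological conclusion applies. In either case, we may assume $M$ is compact and simply connected.

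Next I would translate the extrinsic pinching into an intrinsic curvature condition via the Gauss equation
\begin{equation*}
R_{ijkl}=(\delta_{ik}\delta_{jl}-\delta_{il}\delta_{jk})+\sum_{\alpha}\bigl(h^{\alpha}_{ik}h^{\alpha}_{jl}-h^{\alpha}_{il}h^{\alpha}_{jk}\bigr).
\end{equation*}
For a complex isotropic two-plane in $T_pM\otimes\mathbb{C}$ the isotropic curvature takes the form $4-Q(h,h)$ for a positive-semidefinite quadratic form $Q$ in the components of $h$; the function $\alpha_1(n,|H|)$ is designed so that $|h|^2<\alpha_1(n,|H|)$ forces $Q<4$ for $n\le 6$, while the uniform constant $2\sqrt{2}$ achieves the same conclusion when $n\geq 7$. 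The outcome is positive isotropic curvature, and in fact positive isotropic curvature of $M\times\mathbb{R}$ (PIC$_1$). With that in hand, the appropriate convergence result applies: Hamilton's theorem for compact $4$-manifolds with PIC when $n=4$, and Brendle's theorem (extending Brendle--Schoen) for $n\ge 5$ under PIC$_1$; the normalized Ricci flow deforms the metric on $M$ to one of constant positive sectional curvature, giving the diffeomorphism to $\mathbb{S}^n$.

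The principal obstacle is the algebraic verification that the extrinsic pinching forces PIC$_1$ in each dimensional regime. Since $\alpha_1(n,|H|)$ is exactly the sharp rigidity threshold of Theorem~A, extracting from it a strict lower bound for all isotropic curvatures requires careful use of Simons-type identities, the decomposition of $h$ into its spherical and traceless parts, and eigenvalue estimates for the traceless part, all of which are dimension sensitive. The split between $n\le 6$ and $n\ge 7$ reflects precisely the point at which the sharp function $\alpha_1(n,|H|)$ may be replaced by the cleaner universal constant $2\sqrt{2}$ while still producing PIC$_1$.
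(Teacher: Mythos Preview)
This theorem is not proved in the present paper; it is quoted from Xu--Zhao (reference \cite{MR2550209}), and the only description of the method given here is the sentence immediately preceding the statement: the proof uses ``the convergence results of Hamilton and Brendle for Ricci flow and the Lawson--Simons formula for the nonexistence of stable currents.'' Your overall architecture---pass from the extrinsic pinching to an intrinsic curvature positivity via the Gauss equation, then invoke a Ricci-flow convergence theorem of Hamilton or Brendle, and secure simple connectivity separately---matches that description. In particular, your appeal to Theorem~B for the topological conclusion is essentially an invocation of the Lawson--Simons mechanism, since the Shiohama--Xu sphere theorem itself rests on the nonexistence of stable currents; so you are not really bypassing that ingredient, just packaging it differently.

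Where your sketch is loose is in the claimed bridge to PIC/PIC$_1$. The function $\alpha_1(n,|H|)$ is \emph{not} ``designed'' to force an isotropic-curvature inequality; it arises from the Simons--Chern--do~Carmo--Kobayashi rigidity problem (Theorem~A) and is characterized by the algebraic identity for $\alpha$ recorded in this paper (Lemma~\ref{app}(iv) and equation~(\ref{alid1})). That the pinching $|h|^2<\alpha_1(n,|H|)$ happens to yield the curvature condition needed for Brendle's convergence theorem when $n\le 6$, and that $|h|^2<2\sqrt{2}$ does so when $n\ge 7$, are genuine computations in Xu--Zhao, and the split at $n=6/7$ reflects exactly where those computations succeed---not a general principle about replacing $\alpha_1$ by a constant. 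Your proposal asserts the outcome of those computations but does not indicate how they go; since this algebraic step is the entire content of the theorem beyond citing Hamilton and Brendle, the sketch as written leaves the substantive part unverified.
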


In \cite{MR3005061,XuGu2010}, Gu and Xu proved a differentiable
sphere theorem for submanifolds in a Riemannian manifold via the
Ricci flow. Consequently, they got the following.
\begin{thmA}
Let $M$ be an $n$-dimensional oriented compact submanifold in
the $(n+q)$-dimensional space form $\mathbb{F}^{n+q}
( c )$  with $c\ge0$ and $|H|^2+n^2c>0$. Then\\
\hspace*{2mm}$(i)$ if $n=2$ and $| h |^{2}\leq
2c+|H|^2$, then $M$ is diffeomorphic to $\mathbb{S}^{2}$, or $M$ is flat;\\
\hspace*{2mm}$(ii)$ if $n=3$ and $|h|^2<2c+\frac{|H|^2}{2}$, then
$M$ is
diffeomorphic to $\mathbb{S}^{3}$;\\
\hspace*{2mm}$(iii)$ if $n\ge4$ and $| h |^{2}\leq
2c+\frac{|H|^2}{n-1}$, then $M$ is diffeomorphic to
$\mathbb{S}^{n}$.
\end{thmA}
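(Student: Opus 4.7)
The plan is to use the Ricci flow on the induced metric of $M$ and to reduce the problem to known convergence theorems for the intrinsic flow. The extrinsic pinching $|h|^{2}\le 2c+|H|^{2}/(n-1)$ (and its variants in dimensions $2$ and $3$) is exactly of the form that, via the Gauss equation, translates into a pointwise intrinsic curvature pinching suitable for Hamilton's and Brendle's convergence results. The hypothesis $|H|^{2}+n^{2}c>0$ is used to rule out degenerate limits (flat tori, cylindrical splittings, or totally geodesic Euclidean subspaces), so that one actually arrives at the round sphere.

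For $n\ge 4$, the first step is to combine the Gauss equation
\begin{equation*}
R_{ijkl}=c(\delta_{ik}\delta_{jl}-\delta_{il}\delta_{jk})+\sum_{\alpha}(h^{\alpha}_{ik}h^{\alpha}_{jl}-h^{\alpha}_{il}h^{\alpha}_{jk})
\end{equation*}
with the pinching hypothesis to show that, for any orthonormal $4$-frame $\{e_{1},e_{2},e_{3},e_{4}\}\subset T_{p}M$ and any $\lambda,\mu\in[0,1]$,
\begin{equation*}
R_{1313}+\lambda^{2}R_{1414}+R_{2323}+\mu^{2}R_{2424}-2\lambda\mu\,R_{1234}\ge 0.
\end{equation*}
The key algebraic step is to bound the ``diagonal minus off-diagonal'' quantity $\sum_{\alpha}(h^{\alpha}_{11}h^{\alpha}_{33}+\cdots)-\sum_{\alpha}((h^{\alpha}_{13})^{2}+\cdots)$ from below by a multiple of $|H|^{2}/(n-1)-|h|^{2}$; this is a Cauchy--Schwarz estimate on the traceless part of the second fundamental form and is precisely where the constant $1/(n-1)$ enters. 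The displayed inequality says that $M\times\mathbb{R}^{2}$ has nonnegative isotropic curvature (the PIC2 condition).

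Once weak PIC2 is established, I would invoke Brendle's convergence theorem for the Ricci flow: a compact Riemannian manifold of dimension $n\ge 4$ with PIC2 either has a normalized Ricci flow that converges smoothly to a metric of constant positive curvature, or its universal cover splits isometrically and is a symmetric space, a K\"ahler manifold, or a product. The hypothesis $|H|^{2}+n^{2}c>0$ together with compactness and the structure of submanifolds of $\mathbb{F}^{n+q}(c)$ rules out the splitting alternatives, so convergence to a round metric holds and $M$ is diffeomorphic to $\mathbb{S}^{n}$. For $n=3$, the strict pinching $|h|^{2}<2c+|H|^{2}/2$ yields strictly positive Ricci curvature via the Gauss equation, and Hamilton's 1982 theorem finishes the proof; for $n=2$, Gauss gives $K_{M}\ge 0$ pointwise, and Hamilton's Ricci flow on surfaces yields either convergence to $\mathbb{S}^{2}$ or a flat metric on $M$.

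The main obstacle is the non-strict nature of the pinching in parts (i) and (iii). At $t=0$ one only has weak PIC2, so one must either upgrade this to strict PIC2 along the flow (via a strong maximum principle applied to the curvature-operator PDE, in the spirit of Brendle--Schoen and B\"ohm--Wilking) or identify the equality case and show it is incompatible with $|H|^{2}+n^{2}c>0$. I expect the maximum-principle alternative to force a local splitting of the curvature operator, which via Berger's holonomy theorem either returns a round sphere or places $M$ in a short list of exceptional geometries that can be excluded by revisiting the Gauss equation with the given extrinsic bounds. The analogous branching in dimension two, where the surface Ricci flow dichotomy interacts with the flat alternative in (i), is the other delicate point.
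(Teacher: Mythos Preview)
The statement in question (Theorem~D) is not proved in this paper; it is quoted as background from Gu--Xu \cite{MR3005061,XuGu2010}, where it is established precisely by the Ricci flow route you outline: translate the extrinsic pinching through the Gauss equation into an intrinsic curvature condition (weakly PIC2 for $n\ge 4$, positive Ricci for $n=3$, nonnegative Gauss curvature for $n=2$), and then invoke the convergence theorems of Hamilton and Brendle. Your plan therefore coincides with the original argument the paper cites.

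A few remarks on the details. For $n=2$ the surface Ricci flow is unnecessary: the Gauss equation gives the identity $2K_{M}=2c+|H|^{2}-|h|^{2}\ge 0$, and Gauss--Bonnet then yields $\chi(M)\ge 0$, so $M$ is either diffeomorphic to $\mathbb{S}^{2}$ or is a flat torus. For $n=3$ the strict pinching in fact forces strictly positive \emph{sectional} curvature (the paper notes this just after Theorem~E), so positive Ricci is automatic and Hamilton's 1982 theorem applies as you say. Your identification of the weak-inequality case in (iii) as the delicate point is correct, and the mechanism you propose---the strong maximum principle and the Brendle--Schoen rigidity alternative, with $|H|^{2}+n^{2}c>0$ used to exclude flat factors and rigid symmetric limits---is exactly how Gu--Xu close that gap. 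The Remark following Theorem~D also records a genuinely different proof for $n\ge 4$ via the mean curvature flow (Andrews--Baker \cite{MR2739807}, Baker \cite{baker2011mean}), which bypasses the intrinsic Ricci flow machinery altogether; that is the line of argument the present paper extends.
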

\begin{remark} When $n\geq4$, $c=0$ and $|H|>0$, Andrews-Baker
\cite{MR2739807} independently proved the same differentiable sphere
theorem via the mean curvature flow of submanifolds of high
codimension. Afterwards, Baker {\cite{baker2011mean}} gave another
proof of the differentiable sphere theorem for $n\geq4$ and $c>0$.
Later, Liu-Xu-Ye-Zhao {\cite{MR3078951}} extended the differentiable
sphere theorem above to the case where $n\geq4$, $c<0$ and
$|H|^2+n^2c>0$.
\end{remark}

We refer the readers to
\cite{MR0343217,ChKa2001,MR1633163,XG2} for
further discussions on rigidity theorems of submanifolds with
parallel mean curvature, to
\cite{MR3005061,LiWang2014,liu2011extension,liu2012mean,liu2013mean,Shiohama,XG2}
for more discussions on sphere theorems of submanifolds and to
\cite{Berger,BW,Brendle1,Brendle2,MR2449060,BS2,CD,CTZ,GP,GS,Hamilton,Huisken2,MM,OSY,Perelman,Petersen,Shiohama}
for various sphere theorems of Riemannian manifolds.

Let $F_{0} :M^{n} \rightarrow \mathbb{F}^{n+q}(c)$ be an
$n$-dimensional submanifold immersed in the space form
$\mathbb{F}^{n+q}(c)$. The mean curvature flow with initial value
$F_{0}$ is a smooth family of immersions $F: M \times [ 0,T )
\rightarrow \mathbb{F}^{n+q}(c)$ satisfying
\begin{equation}
  \left\{\begin{array}{l}
    \frac{\partial}{\partial t} F ( x,t ) =H ( x,t ) ,\\
    F ( \cdot ,0 ) =F_{0} ,
  \end{array}\right.
\end{equation}
where $H ( x,t )$ is the mean curvature vector of $M_{t} =F_{t} ( M
)$, $F_{t} =F ( \cdot ,t )$.

In 1980's, Huisken, the founder of the mean curvature flow theory,
initiated the study of the mean curvature flow for compact
hypersurfaces {\cite{MR772132,MR837523,MR892052}}. In 1987, Huisken
{\cite{MR892052}} verified the convergence theorem for the mean
curvature flow of compact hypersurfaces in the spherical space form
of constant curvature $c$ under the pinching condition $| h |^{2} <
\frac{1}{n-1} | H |^{2}+2c$.

For higher codimensional cases, the convergence theorems for the
mean curvature flow in Euclidean spaces, spheres and hyperbolic
spaces were proved by Andrews-Baker {\cite{MR2739807}}, Baker
{\cite{baker2011mean}} and Liu-Xu-Ye-Zhao {\cite{MR3078951}},
respectively. The unified version of the convergence theorems due to
Andrews, Baker, Liu, Xu, Ye and Zhao
{\cite{MR2739807,baker2011mean,MR3078951}} can be summarized as
follows.

\begin{thmA}Let $F_{0} :M^{n} \rightarrow \mathbb{F}^{n+q}(c)$ be an $n$-dimensional ($n \ge 4$)
closed submanifold in the space form with constant curvature $c$. If
$F_{0}$ satisfies $| h |^{2} \le \frac{1}{n-1} | H |^{2} +2c$,
where $|H|^2+n^2c>0$, then the mean curvature flow with initial
value $F_{0}$ converges to a round point in finite time, or $c>0$
and $F_{t}$ converges to a totally geodesic sphere as $t \rightarrow
\infty$.
\end{thmA}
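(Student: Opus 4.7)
The plan is to carry out the Andrews--Baker mean curvature flow scheme in arbitrary codimension, unified across the three sign regimes for $c$ by packaging the hypothesis as $|H|^2 + n^2 c > 0$, which plays the role of $|H|^2 > 0$ in the Euclidean setting. First I would write down the Simons-type evolution equations in $\mathbb{F}^{n+q}(c)$ for $|H|^2$ and for $|\mathring h|^2 := |h|^2 - \tfrac{1}{n}|H|^2$; the ambient curvature contributes explicit $c$-terms (most importantly $2nc|H|^2$ in $\partial_t|H|^2$) but the principal reaction terms $R_1+R_2$ are unchanged from the Euclidean situation. Using $|h|^2 = |\mathring h|^2 + \tfrac{1}{n}|H|^2$, the pinching $|h|^2 \le \tfrac{1}{n-1}|H|^2 + 2c$ is equivalent to $|\mathring h|^2 \le \tfrac{1}{n(n-1)}|H|^2 + 2c$. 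I would apply the parabolic maximum principle to $Q := |\mathring h|^2 - \tfrac{1}{n(n-1)}|H|^2 - 2c$, invoking the Andrews--Baker algebraic inequality $R_1+R_2 \le C(n)|\mathring h|^2\bigl(|\mathring h|^2 + \tfrac{1}{n}|H|^2\bigr)$ to show that the reaction term in $\partial_t Q - \Delta Q$ has the correct sign whenever $Q=0$. The same type of argument shows that $|H|^2 + n^2 c > 0$ is preserved.

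The technical heart of the proof is the pinching \emph{improvement}: for small $\sigma>0$, the quantity
\[
f_\sigma := \frac{|\mathring h|^2}{(|H|^2 + n^2 c)^{1-\sigma}}
\]
satisfies $\sup_{M_t} f_\sigma \le C\sup_{M_0} f_\sigma$. A direct computation of $\partial_t f_\sigma - \Delta f_\sigma$ produces a reaction term of bad sign together with a gradient term proportional to $\langle\nabla|\mathring h|^2,\nabla(|H|^2 + n^2 c)\rangle$; the reaction term must be absorbed by a good term involving $|\nabla h|^2$ via the Kato-type inequality $|\nabla h|^2 \ge \tfrac{3}{n+2}|\nabla H|^2$ (valid in the pinched regime), and $f_\sigma$ is then controlled by a Stampacchia $L^p$-iteration applied to $(f_\sigma-k)_+^p$ on the evolving $M_t$, using the Michael--Simons Sobolev inequality. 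This is the main obstacle: tracking the $c$-contributions through every integration by parts and verifying that the absorption is uniform in $t$ are delicate, and the argument works precisely because $|H|^2 + n^2 c > 0$ is preserved, which is why that hypothesis is the correct unifier for the three signs of $c$.

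Once improved pinching is in hand, Bando--Shi interpolation yields $|\nabla^m h|^2 \le C_m(|H|^2 + n^2 c)^{m+2}$, and the conclusion branches according to long-time behaviour. If $\max_{M_t}(|H|^2 + n^2 c) \to \infty$ at some $T<\infty$, a Type~I rescaling about a space-time maximum of $|h|^2$, combined with the strong estimate $|\mathring h|^2 \le \varepsilon(|H|^2+n^2c)$ on the rescaled solutions, produces a subsequential smooth limit that is totally umbilic and non-flat, hence a shrinking round $\mathbb{S}^n$; Huisken's monotonicity formula and Hamilton's compactness then upgrade this to convergence of the entire flow to a round point. If instead the curvature stays bounded, the ODE comparison $\partial_t|H|^2 \ge \tfrac{2}{n}|H|^4 + 2nc|H|^2 + (\text{good terms})$ together with the pinching forces $c > 0$ and long-time existence; the uniform decay $|\mathring h|^2 \le C(|H|^2+n^2c)^{1-\sigma}\cdot f_\sigma$ combined with interpolation forces $|\mathring h|^2 \to 0$, and the evolution of $|H|^2$ together with Simons-type control then drives $|H|^2 \to 0$, so $F_t$ converges smoothly to a totally geodesic $\mathbb{S}^n \subset \mathbb{S}^{n+q}$.
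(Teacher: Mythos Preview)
This statement is Theorem~E, which the paper does not prove: it is quoted from Andrews--Baker, Baker, and Liu--Xu--Ye--Zhao as background. The paper's own contribution (Theorem~\ref{theo1}) strengthens the pinching to $|h|^2 < \gamma(n,|H|,c)$, and its proof follows the same Huisken/Andrews--Baker template that underlies Theorem~E. So the comparison below is really against that template as implemented in Sections~3--6.

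Your first two stages match the paper's scheme closely: evolution equations, preservation of the pinching $|\mathring h|^2 \le \tfrac{1}{n(n-1)}|H|^2 + 2c$ by the maximum principle, and the pinching improvement via $f_\sigma = |\mathring h|^2/(\text{pinching quantity})^{1-\sigma}$ controlled through a Stampacchia iteration with the Michael--Simon Sobolev inequality. One small inaccuracy: the inequality $|\nabla h|^2 \ge \tfrac{3}{n+2}|\nabla H|^2$ follows from the Codazzi equation alone (Lemma~\ref{dA2}) and does not require the pinching hypothesis.

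Where you diverge is the endgame. The cited papers, and this paper in Sections~5--6, do \emph{not} proceed by Type~I blow-up, Hamilton compactness, and Huisken's monotonicity formula. Instead they prove a direct gradient estimate of the form $|\nabla H|^2 < (\eta|H|)^4 + \Psi(\eta)$ (Theorem~\ref{dH2}) by combining the pinching improvement with the evolution of $|\nabla H|^2$, then use a Bonnet--Myers argument (Lemma~\ref{ric}) to compare $|H|_{\min}$ and $|H|_{\max}$ and bound the diameter, and finally pass to a volume-normalised flow to obtain smooth convergence to a round sphere. Your blow-up route is in principle viable in the Euclidean case, but Huisken's monotonicity formula is an ambient-Euclidean tool and does not transfer directly to $\mathbb{S}^{n+q}$ or $\mathbb{H}^{n+q}$; moreover, upgrading subsequential blow-up limits to full convergence of the original flow to a single round point is a genuine extra step that you have not addressed. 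The gradient-estimate/Myers/normalised-flow route avoids both of these difficulties and is what the literature actually does.
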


Notice that the pinching condition $| h |^{2} < \frac{1}{n-1} | H
|^{2}+2c$ implies that the sectional curvature of $M$ is positive.
On the other hand, the pinching condition $| h |^{2}<\alpha ( n, | H
| ,c )$ for $|H|^2+n^2c>0$ and $c\neq 0$ implies that the Ricci
curvature of the initial submanifold is positive, but does not imply
positivity of the sectional curvature. Hence we need to investigate
the convergence problem for the mean curvature flow in space forms
under the pinching condition $|h|^{2}<\alpha(n,|H|,c)$, for
$|H|^2+n^2c>0$ and $c\neq 0$. Recently, the authors
{\cite{lei2015optimal}} proved an optimal convergence theorem for
the mean curvature flow of arbitrary codimension in hyperbolic
spaces.

Motivated by the rigidity and sphere theorems for submanifolds in
spheres, Liu-Xu-Ye-Zhao {\cite{liu2011extension}} proposed the
following.

\begin{conjecture}
  Let $M_{0}$ be an n-dimensional complete submanifold in the sphere
  $\mathbb{S}^{n+q} \left( 1/ \sqrt{c} \right)$. Suppose that
  $\sup_{M_{0}} ( | h |^{2} - \alpha (n, | H |,c ) ) <0$. Then the mean
  curvature flow with initial value $M_{0}$ converges to a round point in
  finite time, or converges to a totally geodesic sphere as $t \rightarrow
  \infty$. In particular, $M_{0}$ is diffeomorphic to $\mathbb{S}^{n}$.
\end{conjecture}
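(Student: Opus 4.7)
The strategy I would pursue is to adapt the framework of Andrews-Baker \cite{MR2739807} and Baker \cite{baker2011mean}, replacing the sectional-curvature pinching $|h|^2\le\tfrac{1}{n-1}|H|^2+2c$ by the Ricci-type pinching $|h|^2<\alpha(n,|H|,c)$. Since $\alpha$ involves a square root in $|H|^2$ and is not polynomial, the first task is to identify the correct preserved quantity. Writing $|h|^2=|\mathring h|^2+\tfrac{1}{n}|H|^2$ with $\mathring h$ the traceless second fundamental form, the pinching is equivalent to $|\mathring h|^2<\varphi(|H|^2)$, where $\varphi(s):=\alpha(n,\sqrt s,c)-\tfrac{s}{n}$; one checks that on the range $s+n^2c>0$ this $\varphi$ is smooth and concave. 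By the Shiohama-Xu theorem the initial submanifold is compact, so short-time existence of the flow is automatic.

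The first main step is to prove that $W:=|\mathring h|^2-\varphi(|H|^2)$ stays strictly negative along the flow. I would derive the evolution equations of $|h|^2$, $|H|^2$, $|\mathring h|^2$ and $\alpha(n,|H|,c)$ under MCF in $\mathbb{S}^{n+q}(1/\sqrt c)$, decomposing the reaction terms via Simons-type identities as in \cite{MR2739807,baker2011mean}. By the parabolic maximum principle it suffices to show that the reaction part $R_W$ of the evolution of $W$ is non-positive wherever $W=0$. This reduces to an algebraic inequality among the eigenvalues of the shape operators which, unlike Baker's, cannot invoke positivity of $\lambda_i+\lambda_j$ (positive sectional curvature) but only positivity of Ricci sums of the form $\sum_{j\neq i}(\lambda_i\lambda_j+c)$. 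Establishing this sharper algebraic inequality, which is in essence the infinitesimal expression of the optimality of $\alpha$, is where I expect the main difficulty to lie.

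Once preservation is in hand, the next step is to upgrade it to a pinching improvement: for some $\sigma>0$, the quantity $|\mathring h|^2\bigl(|H|^2+n^2c\bigr)^{-1+\sigma}$ is bounded uniformly along the flow. This is the analogue of Huisken's $L^p$/Stampacchia argument, combining Simons' identity, the Michael-Simons Sobolev inequality, and De Giorgi-type iteration, exactly as carried out in \cite{MR892052,MR2739807,baker2011mean} under the stronger pinching. Combined with a gradient estimate of the form $|\nabla h|^2\le\eta|H|^4+C(\eta)$, this forces $M_t$ to become asymptotically umbilical.

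The proof concludes with the standard dichotomy. Either $\max_{M_t}|H|\to\infty$ in some finite time $T$, in which case Huisken's monotonicity formula together with a type-I rescaling identifies the blow-up as a shrinking round sphere, so $F_t$ contracts to a round point; or $|H|$ stays bounded on $[0,\infty)$, forcing (since $c>0$) smooth subconvergence of $M_t$ to a totally geodesic $\mathbb{S}^n$ as $t\to\infty$. Beyond the algebraic inequality in the preservation step, a further delicate point is the behaviour of $\alpha$ and its derivatives near the stratum $|H|=0$, where a careful regularization or case distinction based on whether $|H|^2>0$ is persistently attained is needed before the maximum principle can be applied globally.
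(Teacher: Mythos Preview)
The statement you are trying to prove is Conjecture~1, which the paper does \emph{not} prove; the paper establishes only the weaker Theorem~\ref{theo1} with the pinching function $\gamma=\min\{\alpha,\beta\}$, where $\beta$ is the second-order Taylor polynomial of $\alpha$ at a carefully chosen point $x_0>0$. The reason $\alpha$ must be replaced by $\gamma$ near $|H|^2=0$ is precisely the obstruction that your proposal glosses over in its final sentence.

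Concretely, your preservation step fails. Writing $\mathring\alpha(x)=\alpha(x)-x/n$ as a function of $x=|H|^2$, the evolution of $U=|\mathring h|^2-\mathring\alpha(|H|^2)$ contains the gradient term
\[
2\Bigl[-\tfrac{2(n-1)}{n(n+2)}+\mathring\alpha'(|H|^2)+2|H|^2\mathring\alpha''(|H|^2)\Bigr]\,|\nabla H|^2,
\]
and the maximum principle requires this coefficient to be nonpositive. A direct computation (cf.\ the proof of Lemma~\ref{app}(i)) gives
\[
\lim_{x\to 0^+}\bigl(2x\,\mathring\alpha''(x)+\mathring\alpha'(x)\bigr)=\frac{n^2-2n+2}{2n(n-1)},
\]
which exceeds $\tfrac{2(n-1)}{n(n+2)}$ for every $n\ge 6$ (for $n=6$ one gets $13/30>5/24$). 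Hence the gradient coefficient is strictly positive near $|H|=0$ and the parabolic maximum principle does not close with $\alpha$ as the barrier. This is not a regularity issue that a ``careful regularization or case distinction'' resolves; the inequality genuinely fails. The paper's remedy is to splice in the quadratic $\beta$ on $[0,x_0)$ so that the corresponding quantity $2x\,\mathring\gamma''+\mathring\gamma'$ stays below $\tfrac{2(n-1)}{n(n+2)}-\delta_n$ everywhere (Lemma~\ref{app}(i) and Lemma~\ref{gaep}(i)), at the cost of proving only $|h|^2<\gamma$ rather than $|h|^2<\alpha$.

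Two further points: your claim that $\varphi(s)=\alpha(s)-s/n$ is concave is incorrect, since $\alpha''(x)=\tfrac{2(n-2)(n-1)}{(x^2+4(n-1)x)^{3/2}}>0$; and your reaction-term step (``only positivity of Ricci sums'') also requires a new algebraic inequality, which in the paper is handled by Lemma~\ref{app}(ii) together with the extra $P_2$-terms coming from the codimension (Lemma~\ref{R12}), again crucially using $\gamma$ rather than $\alpha$. In short, your outline matches the paper's architecture, but the key step---preservation with the sharp function $\alpha$---does not go through, and the full conjecture remains open.
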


In particular, noting that $\min_{|H|}   \alpha_1(n,|H|) =2
\sqrt{n-1}$, we have the following.

\begin{conjecture}
  Let $F_{0} :M^{n} \rightarrow \mathbb{S}^{n+q}$ be an $n$-dimensional
  closed submanifold satisfying $| h |^{2} <2 \sqrt{n-1}$. Then
  the mean curvature flow with initial value $F_{0}$ converges to a
  round point or a totally geodesic sphere. In particular, $M_0$ is diffeomorphic to $\mathbb{S}^{n}$.
\end{conjecture}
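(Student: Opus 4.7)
The plan is to establish the stronger Conjecture 1 with $c=1$, from which Conjecture 2 is immediate because $2\sqrt{n-1} = \min_{|H|\ge 0}\alpha_1(n,|H|)$. The approach follows the framework of Andrews--Baker \cite{MR2739807} and Baker \cite{baker2011mean} for the mean curvature flow of arbitrary codimension, but adapted to the genuinely nonlinear pinching function $\alpha_1(n,|H|)$ rather than the polynomial pinching $|H|^2/(n-1)+2$.

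The first main step is \emph{pinching preservation}: show that the time-dependent condition $|h|^2 < \alpha_1(n,|H|)$ is preserved by the flow. Starting from the standard evolution equations
\begin{align*}
(\partial_t - \Delta)|h|^2 &= -2|\nabla h|^2 + 2 R_1,\\
(\partial_t - \Delta)|H|^2 &= -2|\nabla H|^2 + 2 R_2,
\end{align*}
where $R_1,R_2$ are the quartic reaction terms in $h$ plus ambient curvature contributions, I would compute $(\partial_t-\Delta)\bigl(|h|^2 - \alpha_1(n,|H|)\bigr)$ and estimate it on the boundary set. Introducing $\beta := |h|^2-n$, the locus $|h|^2 = \alpha_1(n,|H|)$ is exactly the quadratic curve
\begin{equation*}
(n-1)\beta^2 - n|H|^2\beta + |H|^4 - (n-2)^2|H|^2 = 0,
\end{equation*}
which can be exploited as an on-boundary algebraic identity. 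Hamilton's tensor maximum principle then yields preservation provided a suitable pointwise algebraic inequality controls the combined reaction term at the boundary.

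The second step is \emph{pinching improvement and roundness}: following Andrews--Baker, I would run a Stampacchia iteration for a quantity of the form $|\mathring h|^2/(|H|^2)^{1-\delta}$ for some small $\delta>0$, showing that $|\mathring h|^2/|H|^2 \to 0$ wherever $|H|$ is large. Combined with a gradient estimate of the form $|\nabla h|^2 \le \epsilon|H|^4 + C(\epsilon)$, this triggers the standard dichotomy: either $\max_M|H|\to\infty$ in finite time, in which case parabolic rescaling together with Huisken's monotonicity formula gives smooth subsequential convergence of the rescaled flow to a shrinking round sphere (a round point); or $|H|$ stays uniformly bounded, the flow exists for all $t\ge 0$, and a compactness argument produces convergence to a totally geodesic sphere as $t\to\infty$.

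The main obstacle is the pinching preservation step. The function $\alpha_1(n,|H|)$ is concave in $|H|^2$ because of the term $\sqrt{|H|^4+4(n-1)|H|^2}$, so the pinching region is non-convex and, moreover, is only contained in the region of positive Ricci curvature and not in the region of positive sectional curvature. Consequently the algebraic inequalities used by Andrews--Baker and Baker to bound $\sum\langle h_\alpha,h_\beta\rangle^2$ and $|R^\perp|^2$ by $|h|^4$-type quantities are too coarse here; a sharper algebraic lemma, valid precisely on the quadratic locus above and morally parallel to the Li--Li and Xu sharp Simons-type inequalities underlying Theorem~\ref{rigidity}, is what must be supplied. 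Once this sharp algebraic inequality is in hand, the remaining analytical steps proceed in close parallel to \cite{MR2739807,baker2011mean}, with only the book-keeping tailored to the nonlinear pinching and to the non-smooth behavior of $\alpha_1$ near $|H|=0$.
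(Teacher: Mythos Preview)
The statement you are attempting is \emph{Conjecture 2}; the paper does not prove it. What the paper actually establishes is the weaker Theorem~\ref{theo1}, valid only for $n\ge 6$ and with the pinching function $\alpha$ replaced by the smaller auxiliary function $\gamma=\min\{\alpha,\beta\}$, where $\beta$ is the second-order Taylor polynomial of $\alpha$ at a carefully chosen point $x_0$. Your overall framework (evolution equations, preservation via maximum principle, Stampacchia iteration for $|\mathring h|^2/\mathring\gamma^{1-\sigma}$, gradient estimate, dichotomy) is exactly the one the paper follows, and you have correctly located the difficulty in the preservation step. But your plan to push through with $\alpha_1$ itself has a concrete gap.

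First, a small correction: $\alpha$ is \emph{convex} in $x=|H|^2$ (the paper computes $\alpha''(x)>0$), not concave; moreover $\alpha'(x)\to-\infty$ as $x\to 0^+$, so $\alpha$ is not even $C^1$ at $|H|=0$, which already obstructs a clean maximum-principle argument. More seriously, the ``sharp algebraic lemma'' you are hoping for does not close the estimate. The paper already uses the sharpest known inequalities (Li--Li and the Santos/Xu trace inequality) to bound the reaction terms; with these, the identity
\[
x\,\mathring\alpha'(x)\bigl(\alpha(x)+n\bigr)-\mathring\alpha(x)\bigl(\alpha(x)-n\bigr)=0
\]
holds \emph{exactly} for $\alpha$, so on the boundary $|\mathring h|^2=\mathring\alpha(|H|^2)$ the main reaction term vanishes with no slack. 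In codimension $\ge 2$ there is an additional term governed by $P_2$, whose coefficient $2\mathring\alpha-\tfrac{x}{n}+x\mathring\alpha'$ is strictly positive for small $|H|^2$ (it equals $2n$ at $x=0$). With zero slack from the main term there is nothing to absorb this, and the preservation inequality fails. The paper's remedy is precisely to replace $\alpha$ by the quadratic $\beta$ for $x<x_0$: this makes the main reaction term strictly positive there (Lemma~\ref{app}(ii)) and simultaneously tames the gradient coefficient (Lemma~\ref{app}(i)). Even with this fix, the estimate on $R_3$ (needed for the Simons-type inequality in Lemma~\ref{lapa0}) requires $\tfrac{n-2}{\sqrt{n(n-1)}}>\tfrac{\sqrt2}{2}$, i.e.\ $n\ge 6$. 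So both the full strength of $\alpha$ and the low-dimensional cases remain genuinely open, and your proposal does not supply the missing ingredient.
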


Conjecture 2 in dimension three implies the famous Lawson-Simons
conjecture {\cite{MR0324529}}, which states that if $M$ is a
$3$-dimensional compact submanifold in $\mathbb{S}^{3+q}$, and if $|
h |^{2} <2 \sqrt{2}$, then $M$ is diffeomorphic to $\mathbb{S}^{3}$.
Up to now, the Lawson-Simons conjecture is still open. After the
work on the mean curvature flow of hypersurfaces in spheres due to
Li-Xu-Zhao {\cite{lxz2013optimal}}, the authors
{\cite{lei2014sharp}} obtained a refined version of Huisken's
convergence theorem for the mean curvature flow of hypersurfaces
{\cite{MR892052}}. For more convergence results on the mean
curvature flow with applications in sphere theorems, we refer the
readers to
{\cite{liu2011extension,liu2012mean,liu2013mean,PiSi2015}}.

The purpose of the present paper is to investigate Liu-Xu-Ye-Zhao's
conjectures and prove the following sharp convergence theorem for
the mean curvature flow of arbitrary codimension in spheres.

\begin{theorem}
  \label{theo1}Let $F_{0} :M^{n} \rightarrow \mathbb{S}^{n+q} \left( 1/
  \sqrt{c} \right)$ be an n-dimensional ($n \ge 6$) closed submanifold
  in a sphere. If $F_{0}$ satisfies
  \begin{equation*}  | h |^{2} < \gamma( n,|H|,c ), \end{equation*}
  then the mean curvature flow with initial value $F_{0}$ has a unique smooth
  solution $F: M \times [ 0,T ) \rightarrow \mathbb{S}^{n+q} \left( 1/
  \sqrt{c} \right)$, and $F_{t}$ converges to a round point in finite time or
  converges to a totally geodesic sphere as $t \rightarrow \infty$. Here $\gamma( n,|H|,c )$ is an explicit
  positive scalar defined by
$ \gamma( n,|H|,c ):=\min \{ \alpha ( | H |^{2} ) , \beta ( | H
|^{2} ) \} $,  where
\begin{equation}
 \alpha ( x ) =n c+ \frac{n}{2 ( n-1 )} x- \frac{n-2}{2 ( n-1 )} \sqrt{x^{2}
  +4 ( n-1 ) c x},
\end{equation}
\begin{equation}
  \beta ( x ) = \alpha ( x_{0} ) + \alpha' ( x_{0} ) ( x-x_{0} ) + \frac{1}{2}
  \alpha'' ( x_{0} ) ( x-x_{0} )^{2} , \label{beta}
\end{equation}
\begin{equation*}  x_{0} = \frac{2  n+2 }{n-4} \sqrt{n-1} \Big( \sqrt{n-1} - \frac{n-4}{2
   n+2 } \Big)^{2} c. \end{equation*}
\end{theorem}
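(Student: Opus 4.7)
The plan is to follow the framework of Huisken extended to high codimension by Andrews--Baker and Baker: show first that the pinching $|h|^{2}<\gamma(n,|H|,c)$ is preserved along the flow; then derive a Kato-type gradient estimate $|\nabla h|^{2}\geq\varepsilon|\nabla H|^{2}$ together with Stampacchia iteration bounds that force $|\mathring h|^{2}$ to become small relative to $|H|^{2}$; and finally dichotomize by whether $|H|$ blows up in finite time (Huisken's rescaling yields round-point convergence) or remains bounded for all time (convergence to a totally geodesic sphere as in Baker).

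The technical novelty lies in the preservation step. A short calculation shows $\alpha\in C^{\infty}(0,\infty)$ with $\alpha''>0$ and $\alpha'''<0$, so the osculating parabola $\beta$ at $x_{0}=y_{n}c$ lies below $\alpha$ on $[0,x_{0}]$ and above $\alpha$ on $[x_{0},\infty)$. Hence $\gamma=\min\{\alpha,\beta\}$ equals $\beta$ on the first interval and $\alpha$ on the second, and is globally $C^{2}$. Starting from the ambient-sphere evolution equations
\begin{equation*}
\partial_{t}|h|^{2}=\Delta|h|^{2}-2|\nabla h|^{2}+2R_{1}-2R_{2}+2nc|h|^{2}-2c|H|^{2},
\end{equation*}
\begin{equation*}
\partial_{t}|H|^{2}=\Delta|H|^{2}-2|\nabla H|^{2}+2R_{3}+2nc|H|^{2},
\end{equation*}
with $R_{1},R_{2},R_{3}$ the standard quartic curvature polynomials in $h$, the scalar maximum principle applied to $W=|h|^{2}-\gamma(|H|^{2})$ reduces preservation to verifying that
\begin{equation*}
R_{1}-R_{2}-\gamma'(|H|^{2})R_{3}-\gamma''(|H|^{2})\bigl|\nabla|H|^{2}\bigr|^{2}-\bigl(|\nabla h|^{2}-\gamma'(|H|^{2})|\nabla H|^{2}\bigr)+(\text{curvature})\leq 0
\end{equation*}
on the surface $\{|h|^{2}=\gamma(|H|^{2})\}$.

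On the $\alpha$-branch ($|H|^{2}\geq x_{0}$) the constraint is exactly the Simons--Li--Li--Xu rigidity boundary, and the required inequality is a parabolic enhancement of the algebraic identity behind Theorem~A, aided by the bonus term $-\alpha''(|H|^{2})\bigl|\nabla|H|^{2}\bigr|^{2}\leq 0$ and a Kato-type split of $|\nabla h|^{2}$. On the $\beta$-branch ($|H|^{2}\leq x_{0}$) the pinching is genuinely quadratic; after expanding the reaction in $|\mathring h|^{2}$ and $|H|^{2}$, I would secure nonpositivity by simultaneously invoking $\beta'(x_{0})=\alpha'(x_{0})$ and $\beta''(x_{0})=\alpha''(x_{0})$. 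This is where the specific value $x_{0}=y_{n}c$ is forced, as it is essentially the unique choice making the resulting polynomial marginally nonpositive, and where the hypothesis $n\geq 6$ enters to keep the relevant discriminant strictly positive.

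After preservation, the remaining steps follow~\cite{baker2011mean} closely: Stampacchia iteration on a quantity of the form $|\mathring h|^{2}/(\gamma(|H|^{2})-|h|^{2})^{1-\sigma}$ yields $|\mathring h|^{2}\leq C|H|^{2-\sigma}$ for some $\sigma>0$; Shi-type derivative estimates give compactness; and one concludes either by Huisken's continuous rescaling in the blow-up case or by a Simon-type analysis in the bounded-$|H|$ case. The main obstacle will be the $\beta$-branch reaction estimate, since the quadratic pinching produces indefinite fourth-order cross terms in $(\mathring h,H)$ that absorb only with the exact tuning of $x_{0}$; a secondary issue is gluing the two branch analyses at $x_{0}$, where $\gamma$ is merely $C^{2}$, which I would handle by applying the maximum principle to a smooth mollification of $\gamma$ and passing to the limit.
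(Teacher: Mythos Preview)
Your overall architecture matches the paper's: preserve the pinching, run a Stampacchia iteration to get $|\mathring h|^{2}\le C(|H|^{2}+1)^{1-\sigma}e^{-2\sigma t}$, prove a gradient estimate for $|\nabla H|$, then split on $T<\infty$ versus $T=\infty$. But several of the steps you sketch would not go through as written.

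\textbf{The preservation step cannot be closed using only $|\mathring h|^{2}$ and $|H|^{2}$.} In codimension $q\ge 2$ the reaction terms $R_{1},R_{2},R_{3}$ do not depend only on $|\mathring h|^{2}$ and $|H|^{2}$; after diagonalising $h$ in the direction of $H$ one must separately track the quantity $P_{2}=\sum_{\alpha>1,\,i,j}(h^{\alpha}_{ij})^{2}$ (the part of $|\mathring h|^{2}$ orthogonal to $H$). The evolution of $U=|\mathring h|^{2}-\mathring\gamma(|H|^{2})$ then contains a term of the form
\[
2P_{2}\Bigl(2\mathring\gamma-\tfrac{1}{n}|H|^{2}+|H|^{2}\mathring\gamma'-\tfrac{3}{2}P_{2}\Bigr),
\]
which is \emph{not} automatically nonpositive. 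The paper absorbs it by proving the pointwise inequality
\[
|H|^{2}\mathring\gamma'(\gamma+n)-\mathring\gamma(\gamma-n)\ \ge\ \tfrac{1}{6}\Bigl[\bigl(2\mathring\gamma-\tfrac{1}{n}|H|^{2}+|H|^{2}\mathring\gamma'\bigr)^{+}\Bigr]^{2},
\]
which is an identity on the $\alpha$-branch (both sides vanish for $x\ge x_{0}$) and a genuine inequality on the $\beta$-branch. This, not your ``marginal nonpositivity of a polynomial in $(|\mathring h|^{2},|H|^{2})$'', is the technical heart; the choice of $x_{0}$ is exactly what makes the bracket $(\cdot)^{+}$ vanish on the $\alpha$-side, and $n\ge 6$ enters to make $\tfrac{n-2}{\sqrt{n(n-1)}}>\tfrac{\sqrt2}{2}$ in the $R_{3}$ estimate. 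Your plan to ``expand the reaction in $|\mathring h|^{2}$ and $|H|^{2}$'' misses this.

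\textbf{A second point:} the paper does not apply the maximum principle to $\gamma$ itself but to a perturbation $\mathring\gamma_{\varepsilon}=\mathring\gamma-\varepsilon\,\omega$, where $\omega$ is an explicit function chosen to satisfy the ODE $x\omega'/\omega=(2\alpha-x\alpha'-3n)/(\alpha+n)$ on $x\ge x_{0}$. This is what buys the strict negativity needed later (and feeds the $\varepsilon$ into the iteration). Your proposed iteration quantity $|\mathring h|^{2}/(\gamma-|h|^{2})^{1-\sigma}$ is not what works; the paper uses $f_{\sigma}=|\mathring h|^{2}/\mathring\gamma^{1-\sigma}$, whose denominator is comparable to $|H|^{2}+1$. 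Finally, your concern about mollifying $\gamma$ at $x_{0}$ is unnecessary: the maximum-principle computation uses only $\gamma'$ and $\gamma''$, and $\gamma$ is already $C^{2}$.
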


\begin{remark} Since $\gamma ( n,|H|,c )=\alpha(n,|H|,c)$, for $|H|^2\ge x_{0}$, Theorem \ref{theo1} is sharp. A computation shows that (i)
$\gamma(n,|H|,c) > \frac{1}{n-1}|H|^{2} +2c$, for $n \ge 6$;
(ii) $\gamma( n,|H|,c )
> \frac{3n + 3}{2 n + 6} \sqrt{n-1} c$, for $n
\ge 6$; (iii) $ \gamma(n,|H|,c)> 2 \sqrt{2}c$, for $n
\ge 7$.  Therefore, Theorem \ref{theo1} substantially improves
Theorems C, D and E. The pinching condition in Theorem \ref{theo1}
implies that the Ricci curvature of the initial submanifold is
positive, but does not imply positivity of the sectional
curvature.\end{remark}

Since $\gamma( n,|H|,c )> \frac{3n + 3}{2 n + 6} \sqrt{n-1} c$, we obtain

\begin{theorem}
  Let $F_{0} :M^{n} \rightarrow \mathbb{S}^{n+q} \left( 1/
  \sqrt{c} \right)$ be an $n$-dimensional
  ($n \ge 6$) closed submanifold. If $F_{0}$
  satisfies \[   | h |^{2} < \frac{3n + 3}{2 n + 6} \sqrt{n-1}\, c, \]
  then the mean curvature flow with
  initial value $F_{0}$ has a unique smooth solution $F: M \times [ 0,T )
  \rightarrow \mathbb{S}^{n+q} \left( 1/
  \sqrt{c} \right)$, and $F_{t}$ converges to a round point
  in finite time or converges to a totally geodesic sphere as $t \rightarrow
  \infty$.
\end{theorem}

As a consequence of Theorem \ref{theo1}, we obtain a differentiable sphere theorem.

\begin{theorem}
  \label{theo5}
  Let $M_0$  be
  an n-dimensional ($n \ge 6$) closed submanifold in $ \mathbb{S}^{n+q} \left( 1/ \sqrt{c} \right)$.
  If $M_{0}$ satisfies $| h |^{2} < \gamma ( n,|H|,c )$, then $M_0$ is diffeomorphic
  to the standard $n$-sphere $\mathbb{S}^{n}$. In particular, if $M_{0}$
  satisfies
  $| h |^{2} < \frac{3n + 3}{2 n + 6} \sqrt{n-1}c, $
  then $M_0$ is diffeomorphic
  to $\mathbb{S}^{n}$.
\end{theorem}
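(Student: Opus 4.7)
The plan is to derive Theorem \ref{theo5} as an essentially immediate corollary of the convergence Theorem \ref{theo1}. The central observation is that the mean curvature flow is a smooth one-parameter family of immersions, so once the flow converges to either a round point (after rescaling) or to a totally geodesic sphere, the initial submanifold is forced to be diffeomorphic to $\mathbb{S}^n$.

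First I would apply Theorem \ref{theo1} to $F_0$. Under the pinching condition $|h|^{2} < \gamma(n,|H|,c)$, it produces a unique smooth flow $F:M\times[0,T)\to\mathbb{S}^{n+q}(1/\sqrt{c})$ which either converges smoothly to a totally geodesic $n$-sphere as $t\to\infty$, or contracts to a round point in finite time. In the first case, smooth $C^{\infty}$-convergence $F_{t}\to F_{\infty}$ together with the implicit function theorem identifies $F_{t}(M)$ with a standard round sphere for all sufficiently large $t$, and the smooth isotopy of immersions $\{F_{t}\}$ then transports this diffeomorphism back to $M_{0}$. In the round-point case, one rescales the flow about the singular point; by the classical Huisken-type monotonicity and the curvature bounds supplied by Theorem \ref{theo1}, the rescaled submanifolds converge smoothly to the unit round $n$-sphere, so $M_{t}\cong\mathbb{S}^{n}$ diffeomorphically for $t$ close to $T$, and again the smooth flow transports this diffeomorphism back to $M_{0}$.

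For the ``in particular'' assertion, I would invoke the pointwise inequality $\gamma(n,|H|,c) > \frac{7}{6}\sqrt{n-1}\,c$ for $n\ge 6$, which is recorded as item (ii) of the remark following Theorem \ref{theo1}. Since this inequality holds for every value of $|H|$, the stronger pointwise hypothesis $|h|^{2} < \frac{7}{6}\sqrt{n-1}\,c$ automatically implies the variable-$|H|$ hypothesis $|h|^{2} < \gamma(n,|H|,c)$, and the conclusion follows from the first part. Alternatively, one can quote the intermediate convergence theorem above (with constant $k_{n}$), using that $k_{n} > \frac{7}{6}\sqrt{n-1}$.

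The serious geometric and analytic work has already been performed in the proof of Theorem \ref{theo1}; the only subtle point for Theorem \ref{theo5} is upgrading ``convergence of $F_{t}$'' to a bona fide diffeomorphism of $M_{0}$ with $\mathbb{S}^{n}$, rather than just a homeomorphism. This is routine given the smooth convergence produced by Theorem \ref{theo1}. I therefore anticipate no essential obstacle: the proof should occupy only a few lines, consisting of (a) invoking Theorem \ref{theo1}, (b) extracting smooth diffeomorphisms from the two convergence alternatives, and (c) a one-line pointwise comparison of pinching functions for the second assertion.
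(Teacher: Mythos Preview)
Your proposal is correct and matches the paper's approach exactly: the paper also treats Theorem~\ref{theo5} as an immediate corollary of the convergence Theorem~\ref{theo1}, with the ``in particular'' clause following from the pointwise inequality $\gamma(n,|H|,c) > \tfrac{7}{6}\sqrt{n-1}\,c$ recorded in the remark (and equivalently from $k_n > \tfrac{7}{6}\sqrt{n-1}$). No separate proof is given in the paper beyond these observations, so your outline is precisely what is intended.
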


Under the weakly pinching condition, we get the following
convergence theorem.

\begin{theorem}
  \label{theo2}Let $F_{0} :M^{n} \rightarrow \mathbb{S}^{n+q} \left( 1/
  \sqrt{c} \right)$ be an n-dimensional ($n \ge 6$) closed submanifold
  in a sphere. If $F_{0}$ satisfies
  $ | h |^{2} \le \gamma( n,|H|,c )
     ,$
  then the mean curvature flow with initial value $F_{0}$ has a unique smooth
  solution $F: M \times [ 0,T ) \rightarrow \mathbb{S}^{n+q} \left( 1/
  \sqrt{c} \right)$, and either
  \begin{enumerateroman}
    \item $T$ is finite, and $F_{t}$ converges to a round point as $t
    \rightarrow T$,

    \item $T= \infty$, and $F_{t}$ converges to a totally geodesic sphere as
    $t \rightarrow \infty$, or

    \item $T$ is finite,  $M_{t}$ is congruent to $\mathbb{S}^{n-1} (
    r_{1} ) \times \mathbb{S}^{1} ( r_{2} )$, where $r_{1}^{2} +r_{2}^{2} =1/c$, $r_{1}^{2} = \frac{n-1}{n c} ( 1-
    \mathrm{e}^{2n c ( t-T )} )$, and $F_{t}$ converges to a great circle as
    $t \rightarrow T$.
  \end{enumerateroman}
\end{theorem}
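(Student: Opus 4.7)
The plan is to reduce Theorem \ref{theo2} to Theorem \ref{theo1} by a strong parabolic maximum principle, analyze the resulting equality case by rigidity, and then integrate the Clifford-torus solution of MCF explicitly. Short-time existence gives a unique smooth flow on a maximal interval $[0,T)$. Set $\Phi(x,t):=|h|^2(x,t)-\gamma(n,|H|(x,t),c)$; by hypothesis $\Phi(\cdot,0)\le 0$. The proof of Theorem \ref{theo1} establishes that $\Phi$ satisfies a differential inequality of the form $(\partial_t-\Delta)\Phi\le\langle V,\nabla\Phi\rangle+K\Phi$ along the flow, so that the set $\{\Phi<0\}$ is preserved. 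Under the weak hypothesis the parabolic strong maximum principle yields a dichotomy: either there exists $t_0\in(0,T)$ with $\Phi(\cdot,t_0)<0$ pointwise, whereupon Theorem \ref{theo1} applied to $F_{t_0}$ as new initial data produces cases (i) or (ii); or $\Phi\equiv 0$ throughout $M\times[0,T)$.

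In the latter case I would classify submanifolds satisfying $|h|^2\equiv\gamma(n,|H|,c)$. Revisit the proof of Theorem \ref{theo1} to extract that the reaction term in the evolution of $\Phi$ vanishes only along the Clifford-torus locus in the $(|h|^2,|H|^2)$-plane, and that this locus sits entirely in the regime $\gamma=\alpha$ (that is, $|H|^2\ge x_0$); in particular $\Phi\equiv 0$ forces $|h|^2=\alpha(n,|H|,c)$ pointwise with $|H|$ constant in $x$ and the normal bundle of $M_t$ reducing to a line bundle. The refined sharp pinching rigidity (Theorem A in the introduction) then applies, and since $n\ge 6$ excludes the Veronese surface while $|H|\neq 0$ and $|h|^2\ne nc$ exclude the totally umbilic and minimal Clifford alternatives, $M_t$ is congruent to a Clifford hypersurface $\mathbb{S}^{n-1}(r_1(t))\times\mathbb{S}^1(r_2(t))\subset\mathbb{S}^{n+1}(1/\sqrt c)\subset\mathbb{S}^{n+q}(1/\sqrt c)$ with $r_1^2+r_2^2=1/c$.

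Finally I would compute the MCF evolution of this one-parameter family explicitly. Parametrizing $X=(r_1 u,r_2 v)\in\mathbb{R}^{n+2}$ with $|u|=|v|=1$, the unit normal to the torus inside the sphere is $N=\sqrt c(r_2 u,-r_1 v)$ and the scalar mean curvature (with respect to $N$) is $H_{\mathrm{sc}}=\sqrt c\bigl(r_1/r_2-(n-1)r_2/r_1\bigr)$. Since the flow preserves this symmetric family, $\partial_t X=H_{\mathrm{sc}}N$ reduces to $\dot r_1=\sqrt c\,H_{\mathrm{sc}}\,r_2$, i.e.\ $(r_1^2)'=2nc\,r_1^2-2(n-1)$, whose solution with terminal condition $r_1(T)=0$ is $r_1(t)^2=\tfrac{n-1}{nc}\bigl(1-e^{2nc(t-T)}\bigr)$; hence $M_t$ collapses to the great circle $\mathbb{S}^1(1/\sqrt c)$ as $t\to T$. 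The main obstacle is the middle step: matching the parabolic equality case of the pinching estimate for Theorem \ref{theo1} (whose function $\gamma=\min\{\alpha,\beta\}$ is piecewise) with the algebraic equality case of the classical Simons-type rigidity, and in particular ruling out the $\gamma=\beta$ regime by verifying that the Clifford-torus identity lies strictly above the graph of $\beta$ outside $|H|^2\ge x_0$.
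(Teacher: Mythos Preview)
Your outline matches the paper's proof, but the differential inequality you quote is weaker than what the paper actually extracts, and this matters for the equality case you flag as the main obstacle. Revisiting the computation from Theorem~\ref{pinch} with $\varepsilon=0$ (so $\Phi=|\mathring{h}|^{2}-\mathring{\gamma}$), the paper obtains not $(\partial_t-\Delta)\Phi\le\langle V,\nabla\Phi\rangle+K\Phi$ but rather
\[
(\partial_t-\Delta)\Phi\le -c_n\,|\nabla H|^{2}\;+\;\bigl(\text{terms linear and quadratic in }\Phi\bigr),
\]
with a strictly negative coefficient on $|\nabla H|^{2}$ coming from Lemma~\ref{app}\,(i). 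In the equality case $\Phi\equiv0$ this forces $\nabla H\equiv0$ directly, so each $M_t$ has parallel mean curvature; there is no need to dissect the reaction term to locate the Clifford locus or to argue separately that the normal bundle flattens. The rigidity step is then immediate once you observe, via Lemma~\ref{gminab}\,(i),(iii), that $\gamma\le\min\{\alpha,\tfrac{2n}{3}+\tfrac{5}{3n}|H|^2\}=C(n,q,|H|)$, so the hypothesis of Theorem~A (more precisely, Theorem~3 of Xu~\cite{MR1241055}) is met globally regardless of whether a given point lies in the $\alpha$- or $\beta$-regime of $\gamma$; your worry about ``ruling out the $\gamma=\beta$ regime'' thus dissolves. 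Excluding the totally umbilical and minimal Clifford alternatives uses $\mathring{\gamma}>0$ and $\mathring{\gamma}(0)<n$, and $n\ge6$ rules out Veronese. Your explicit ODE computation for the torus agrees with the paper's.
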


As a consequence of Theorem \ref{theo2}, we have the following
classification theorem.

\begin{corollary}
  Let $M_0$ be an n-dimensional ($n \ge 6$) closed submanifold in
  $\mathbb{S}^{n+q} \left( 1/ \sqrt{c} \right)$ which satisfies $| h |^{2}
  \le \gamma( n,|H|,c )$. Then $M_0$
  is either diffeomorphic to the standard $n$-sphere $\mathbb{S}^{n}$, or
  congruent to $\mathbb{S}^{n-1} ( r_{1} ) \times \mathbb{S}^{1} ( r_{2} )$,
  where $r_{1}^{2} +r_{2}^{2} =1/c$ and $r_{1}^{2} < \frac{n-1}{n c}$.
\end{corollary}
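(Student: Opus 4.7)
The plan is to deduce this classification directly from Theorem \ref{theo2} by enumerating its three alternatives. Under the stated weak pinching hypothesis $|h|^{2} \le \gamma(n,|H|,c)$, Theorem \ref{theo2} produces a unique smooth solution $F: M \times [0,T) \to \mathbb{S}^{n+q}(1/\sqrt{c})$ to the mean curvature flow starting from $M_0$, and this flow must fall into exactly one of the cases (i), (ii), (iii). The corollary will follow by reading off what each alternative forces on $M_0$ itself.

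In alternatives (i) and (ii), the flow is a smooth family of embeddings over $[0,T)$, so each $M_t$ is diffeomorphic to $M_0$. In case (i), $F_t$ converges to a round point as $t \to T$, meaning that after rescaling $M_t$ is $C^{\infty}$-close to a round $n$-sphere for $t$ near $T$; hence $M_t$ is diffeomorphic to $\mathbb{S}^n$, and therefore so is $M_0$. In case (ii), $F_t$ converges smoothly to a totally geodesic $n$-sphere as $t \to \infty$, and the same invariance of diffeomorphism type under the flow forces $M_0$ to be diffeomorphic to $\mathbb{S}^n$. Both subcases give the first alternative stated in the corollary.

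In alternative (iii), Theorem \ref{theo2} records the stronger fact that $M_t$ is congruent to $\mathbb{S}^{n-1}(r_1(t)) \times \mathbb{S}^1(r_2(t))$ for \emph{every} $t \in [0,T)$, with $r_1(t)^2 + r_2(t)^2 = 1/c$ and $r_1(t)^2 = \tfrac{n-1}{nc}\bigl(1 - \mathrm{e}^{2nc(t-T)}\bigr)$. Specializing to $t = 0$ and setting $r_i := r_i(0)$, we immediately obtain $r_1^2 + r_2^2 = 1/c$. Short-time existence of the flow gives $T > 0$, so $\mathrm{e}^{-2ncT} \in (0,1)$ and hence $1 - \mathrm{e}^{-2ncT} \in (0,1)$, which yields the strict inequality $r_1^2 < \tfrac{n-1}{nc}$. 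This produces the second alternative of the corollary and completes the enumeration.

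There is no serious analytic obstacle beyond Theorem \ref{theo2} itself; the only point requiring care is that alternative (iii) of Theorem \ref{theo2} must be read as giving the product structure for \emph{all} $t \in [0,T)$, and not merely in the singular limit $t \to T$, so that the specialization $t = 0$ is legitimate. One should also note that the strict inequality $r_1^2 < \tfrac{n-1}{nc}$ rules out exactly the minimal Clifford torus $r_1^2 = \tfrac{n-1}{nc}$, which is consistent with its role as a stationary, non-shrinking example of the flow.
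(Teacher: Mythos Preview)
Your proposal is correct and follows exactly the argument the paper intends: the corollary is stated as an immediate consequence of Theorem \ref{theo2} with no separate proof given, and your case-by-case reading of alternatives (i)--(iii), including the specialization to $t=0$ and the observation that $T>0$ yields the strict inequality $r_1^2 < \tfrac{n-1}{nc}$, is precisely the implicit derivation. One small terminological slip: the flow is a family of \emph{immersions}, not necessarily embeddings, but this does not affect your argument since the diffeomorphism type of the domain $M$ is fixed throughout.
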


Motivated by the convergence theorems of Ricci flow due to Hamilton
\cite{Hamilton} and Chen-Tang-Zhu \cite{CTZ}, we propose the
following problem: Is it possible for one to prove a sharp
convergence theorem of mean curvature flow in low dimensions?

\section{Notations and formulas}

Let $( M^{n} ,g )$ be a Riemannian manifold isometrically immersed in
$\mathbb{S}^{n+q} \left( 1/ \sqrt{c} \right)$. Let $T M$ and $N M$ be the
tangent bundle and normal bundle of $M$, respectively. For sections of the
bundle $T M \oplus N M$, we denote by $( \cdot )^{\top}$ and $( \cdot
)^{\bot}$ the projections onto $T M$ and $N M$, respectively.

Denote by $\overline{\nabla}$ the Levi-Civita connection of the ambient
space. We use the same symbol $\nabla$ to represent the connections
of $T M$ and $N M$. Let $\Gamma ( T M )$ and $\Gamma ( N M )$ be the
spaces of smooth sections of the bundles. The connections of $T M$
and $N M$ are given by $\nabla_{X} Y= ( \overline{\nabla}_{X} Y )^{\top}$
and $\nabla_{X} \xi = ( \overline{\nabla}_{X} \xi )^{\bot}$, for $X,Y \in
\Gamma ( T M )$, $\xi \in \Gamma ( N M )$. The second fundamental
form of $M$ is defined as $h ( X,Y ) = ( \overline{\nabla}_{X} Y
)^{\bot}$.

We shall make use of the following convention on the range of indices:
\begin{equation*} 1 \leq i,j,k,\cdots \leq n,\hspace{2em} 1 \leq \alpha,\beta,\gamma,\cdots \leq q.\end{equation*}
Let $\{ e_{i} \}$ be a local orthonormal frame for the tangent
bundle, and $\{ \nu_{\alpha}
 \}$ a local orthonormal frame for the normal bundle.
Then the mean curvature vector is defined as $H= \sum_{i} h(e_i,e_i)$.
With the local frame, the components of $h$ and $H$ are given by
$h^{\alpha}_{i j}=\langle h(e_i,e_j),\nu_\alpha\rangle$, $H^\alpha=\sum_{i} h^{\alpha}_{i i}$.
Let $\mathring{h} =h- \tfrac{1}{n} g \otimes H$ be the traceless second
fundamental form. Its norm satisfies $|\mathring{h}|^{2} = | h |^{2} -
\frac{1}{n} | H |^{2}$.

We have the following estimates for $| \nabla h |$ and $| \nabla H |$.

\begin{lemma}
  \label{dA2}For any submanifold in a sphere, we have
  \begin{enumerateroman}
    \item $| \nabla h |^{2} \geq \frac{3}{n+2} | \nabla H |^{2}$,

    \item $\big| \nabla | H |^{2} \big| \leq 2 | H |   | \nabla H |$.
  \end{enumerateroman}
\end{lemma}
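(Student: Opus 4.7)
The plan for (i) is to exploit the fact that in a space form, the Codazzi equation forces $\nabla h$ to be totally symmetric in all three of its lower indices. In coordinates, write $T_{ijk}^\alpha := \nabla_i h_{jk}^\alpha$; Codazzi gives $T_{ijk}^\alpha = T_{jik}^\alpha$, and we already have $h_{jk}^\alpha = h_{kj}^\alpha$, so $T$ is symmetric in $i,j,k$ for each fixed normal direction $\alpha$. Under this symmetry, the only ``trace'' of $T$ is $\nabla H$, since $\sum_k T_{ikk}^\alpha = \nabla_i H^\alpha$ (and by the symmetry this equals any other contraction).

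Next I would carry out a Kato-type orthogonal decomposition $T_{ijk}^\alpha = S_{ijk}^\alpha + E_{ijk}^\alpha$, where
\begin{equation*}
S_{ijk}^\alpha := \tfrac{1}{n+2}\bigl(\nabla_i H^\alpha \,\delta_{jk} + \nabla_j H^\alpha \,\delta_{ik} + \nabla_k H^\alpha \,\delta_{ij}\bigr)
\end{equation*}
is the (unique) totally symmetric tensor with the prescribed trace $\nabla H$, and $E$ is totally traceless. Expanding $|S|^2$ by summing over $i,j,k,\alpha$ produces three diagonal terms each contributing $n|\nabla H|^2$ and three cross terms each contributing $|\nabla H|^2$, for a total of $\frac{3n+6}{(n+2)^2}|\nabla H|^2 = \frac{3}{n+2}|\nabla H|^2$. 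The orthogonality of the decomposition then yields $|\nabla h|^2 = |S|^2 + |E|^2 \geq \frac{3}{n+2}|\nabla H|^2$, which is (i). The one step that needs a little care is checking orthogonality and the trace identity for $S$; this is where the constant $\frac{1}{n+2}$ is forced, and any miscount would spoil the sharp coefficient.

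For (ii), the plan is a one-line Cauchy-Schwarz. Since $|H|^2 = \langle H,H\rangle$ with $H$ a section of the normal bundle and $\nabla$ the normal connection (which is metric),
\begin{equation*}
\nabla_i |H|^2 = 2\langle \nabla_{e_i} H, H\rangle,
\end{equation*}
so $(\nabla_i |H|^2)^2 \leq 4|H|^2\,|\nabla_{e_i} H|^2$, and summing over $i$ gives $|\nabla |H|^2|^2 \leq 4|H|^2 |\nabla H|^2$, i.e.\ (ii).

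The main obstacle, such as it is, is really just (i): recognizing that the Codazzi symmetry of $\nabla h$ is what makes a refined Kato inequality (with constant $\frac{3}{n+2}$, better than the naive $\frac{1}{n}$ obtained by treating $H$ as a trace of $h$) possible, and then setting up the symmetric trace tensor $S$ correctly so that the combinatorial constant comes out sharp.
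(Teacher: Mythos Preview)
Your argument is correct and is exactly the standard proof the paper has in mind: it cites Huisken and Andrews--Baker for (i), where the totally symmetric decomposition $\nabla h = S + E$ with $S_{ijk}^\alpha = \tfrac{1}{n+2}(\nabla_i H^\alpha \delta_{jk} + \nabla_j H^\alpha \delta_{ik} + \nabla_k H^\alpha \delta_{ij})$ is precisely what is used, and it invokes Cauchy--Schwarz for (ii) just as you do. One tiny slip in your narration: the three cross terms in $|S|^2$ each contribute $2|\nabla H|^2$, not $|\nabla H|^2$, but your total $\tfrac{3n+6}{(n+2)^2}$ is correct.
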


The proof of (i) is the same as in {\cite{MR2739807,MR772132}}, and (ii)
follows from the Cauchy-Schwarz inequality.

As in {\cite{MR2739807,baker2011mean}}, we define the following scalars on
$M$.
\begin{equation*}  R_{1} = \sum_{\alpha , \beta} \Big( \sum_{i,j} h^{\alpha}_{i j}
   h^{\beta}_{i j} \Big)^{2} + \sum_{i,j, \alpha , \beta} \Big( \sum_{k} (
   h^{\alpha}_{i k}  h^{\beta}_{j k} -h^{\beta}_{i k}  h^{\alpha}_{j k} )
   \Big)^{2} , \end{equation*}
\begin{equation*}  R_{2} = \sum_{i,j} \Big( \sum_{\alpha} H^{\alpha} h^{\alpha}_{i j}
   \Big)^{2} , \hspace{2em} R_{3} = \sum_{i,j,k, \alpha , \beta} H^{\alpha}
   h_{i k}^{\alpha} h_{i j}^{\beta} h_{j k}^{\beta} . \end{equation*}
We have the following identity for the Laplacian of $|\mathring{h}|^{2}$.

\begin{equation}
  \frac{1}{2} \Delta |\mathring{h}|^{2} = \left\langle \mathring{h} , \nabla^{2} H
  \right\rangle + | \nabla h |^{2} - \frac{1}{n} | \nabla H |^{2} + n c
  |\mathring{h}|^{2} -R_{1} +R_{3} . \label{lapho2}
\end{equation}

At a fixed point in $M$, we choose an orthonormal frame $\{ \nu_{\alpha} \}$
for the normal space and an orthonormal frame $\{ e_{i} \}$ for the tangent
space, such that $H= | H | \nu_{1}$ and $( h^{1}_{i j} )$ is diagonal. Let
$\mathring{\lambda}_{i}$ be the diagonal elements of $(
\mathring{h}^{1}_{i j} )$. Thus we have $\mathring{\lambda}_{i}
=h^{1}_{i i} - \frac{1}{n} | H |$ and $\mathring{h}^{\alpha}_{i j}
=h^{\alpha}_{i j}$ for $\alpha >1$. We split $|\mathring{h}|^{2}$ into three parts
$$|\mathring{h}|^{2} =P_{1} +P_{2}, \quad P_{2} =Q_{1} +Q_{2},$$ where
\begin{equation*}  P_{1} = \sum_{i} \mathring{\lambda}_{i}^{2} , \hspace{1em} Q_{1} =
   \underset{i}{\underset{\alpha >1}
{\sum}} \left( h^{\alpha}_{i i} \right)^{2} , \hspace{1em}
   Q_{2} = \underset{i \neq j}{\underset{\alpha >1}
{\sum}} \left( h^{\alpha}_{i j} \right)^{2} . \end{equation*}
With the special frame, we have
\begin{eqnarray}
  R_{1} & = & P_{1}^{2} + \frac{2}{n} P_{1} | H |^{2} + \frac{1}{n^{2}} | H
  |^{4} \nonumber\\
  &  & +2 \sum_{\alpha >1} \Big( \sum_{i} \mathring{\lambda}_{i}
  h^{\alpha}_{i i} \Big)^{2} + \sum_{\alpha , \beta >1} \Big(
  \sum_{i,j} h^{\alpha}_{i j}   h^{\beta}_{i j}
  \Big)^{2} \\
  &  & +2 \underset{i \neq j}{\underset{\alpha >1}
{\sum}} \left( \left( \mathring{\lambda}_{i} - \mathring{\lambda}_{j}
  \right) h^{\alpha}_{i j} \right)^{2} +
  \underset{i,j}{\underset{\alpha , \beta >1}
{\sum}} \Big( \sum_{k} \left( h^{\alpha}_{i k}
  h^{\beta}_{j k} - h^{\alpha}_{j k}
  h^{\beta}_{i k} \right) \Big)^{2} , \nonumber
\end{eqnarray}
\begin{equation}
  R_{2} = \sum_{i,j} ( | H | h^{1}_{i j} )^{2} = | H |^{2} \Big( P_{1} +
  \frac{1}{n} | H |^{2} \Big) , \label{R2equa}
\end{equation}
and
\begin{equation}
  R_{3} = \frac{| H |^{4}}{n^{2}}  + \frac{| H |^{2}}{n} (3P_{1} +P_{2}) + | H
  | \sum_{\alpha ,i} \mathring{\lambda}_{i} \left( \mathring{h}^{\alpha}_{i i}
  \right)^{2} + \frac{| H |}{2} \underset{i \neq j}{\underset{\alpha >1}
{\sum}} \left( \mathring{\lambda}_{i} + \mathring{\lambda}_{j} \right)
  \left( h^{\alpha}_{i j} \right)^{2} .
\end{equation}
Using the Cauchy-Schwarz inequality we get $$\sum_{\alpha >1} \Big( \sum_{i}
\mathring{\lambda}_{i}   h^{\alpha}_{i i} \Big)^{2} \leq
P_{1} Q_{1}.$$ We also have
\begin{equation*}  \underset{i \neq j}{\underset{\alpha >1}
{\sum}} \left( \left( \mathring{\lambda}_{i} - \mathring{\lambda}_{j}
   \right) h^{\alpha}_{i j} \right)^{2} \leq
   \underset{i \neq j}{\underset{\alpha >1}
{\sum}} 2 \left( \mathring{\lambda}_{i}^{2} +
   \mathring{\lambda}_{j}^{2} \right) \left( h^{\alpha}_{i j}
   \right)^{2} \leq 2P_{1} Q_{2} . \end{equation*}
It follows from Theorem 1 of {\cite{MR1161925}} that
\begin{equation*}  \sum_{\alpha , \beta >1} \Big( \sum_{i,j} h^{\alpha}_{i j}
   h^{\beta}_{i j} \Big)^{2} + \underset{i,j}{\underset{\alpha , \beta >1}
{\sum}} \Big( \sum_{k} \left( h^{\alpha}_{i k}
   h^{\beta}_{j k} - h^{\alpha}_{j k}
   h^{\beta}_{i k} \right) \Big)^{2} \leq \frac{3}{2}
   P_{2}^{2} . \end{equation*}
Thus we obtain
\begin{eqnarray}
  R_{1} & \leq & P_{1}^{2} + \frac{2}{n} P_{1} | H |^{2} +
  \frac{1}{n^{2}} | H |^{4} +2P_{1} Q_{1} +4P_{1} Q_{2} + \frac{3}{2}
  P_{2}^{2} \nonumber\\
  & \leq & |\mathring{h}|^{4} + \frac{2}{n} P_{1} | H |^{2} + \frac{1}{n^{2}} |
  H |^{4} +2P_{2} |\mathring{h}|^{2} - \frac{3}{2} P_{2}^{2} .  \label{R1less}
\end{eqnarray}
From (\ref{R2equa}) and (\ref{R1less}), we have
\begin{lemma}
  \label{R12} $R_1$ and $R_2$ satisfy
  \begin{enumerateroman}
  \item  $R_{1} - \frac{1}{n} R_{2} \leq | \mathring{h} |^{4} + \frac{1}{n}
  | \mathring{h} |^{2} | H |^{2} +2P_{2} | \mathring{h} |^{2} - \frac{3}{2} P_{2}^{2} -
  \frac{1}{n} P_{2} | H |^{2}$,

  \item  $R_{2} = | \mathring{h} |^{2} | H |^{2} + \frac{1}{n} | H |^{4} -P_{2} | H
  |^{2}$.
  \end{enumerateroman}
\end{lemma}

By an algebraic inequality (see {\cite{MR1289187,MR1633163}}),
we have
\begin{eqnarray*}
   \sum_{\alpha, i} \mathring{\lambda}_i ( \mathring{h}^{\alpha}_{i
  i} )^2 & \geq & -\tfrac{n - 2}{\sqrt{n (n - 1)}} \Big( \sum_i
  \mathring{\lambda}_i^2 \Big)^{\frac{1}{2}} \Big( \sum_{\alpha, i} (
  \mathring{h}^{\alpha}_{i i} )^2 \Big)\\
  &=& - \tfrac{n - 2}{\sqrt{n (n - 1)}}
  \sqrt{P_1} ( | \mathring{h} |^2 - Q_2 )\\
  & \geq & - \tfrac{n - 2}{\sqrt{n (n - 1)}}  \Big( \frac{1}{2}
  ( P_1 + | \mathring{h} |^2 ) | \mathring{h} | - \sqrt{P_1} Q_2 \Big) .
\end{eqnarray*}
We also have
\begin{equation*}  \underset{i \neq j}{\underset{\alpha >1}
{\sum}} ( \mathring{\lambda}_{i} + \mathring{\lambda}_{j}
   ) ( h^{\alpha}_{i j} )^{2} \geq
   \underset{i \neq j}{\underset{\alpha >1}
{\sum}} - \sqrt{2 ( \mathring{\lambda}_{i}^{2} +
   \mathring{\lambda}_{j}^{2} )} ( h^{\alpha}_{i j}
   )^{2} \geq - \sqrt{2P_{1}} Q_{2} . \end{equation*}
Note that $  \tfrac{n-2}{\sqrt{n ( n-1 )}} > \frac{\sqrt{2}}{2}$ if $n
\geq 6$. We get
\begin{equation}
  R_{3} \geq \frac{| H |^{4}}{n^{2}}  + \frac{| H |^{2}}{n} (3P_{1} +P_{2})
  - \tfrac{n-2}{\sqrt{n ( n-1 )}} | H || \mathring{h} |
  \frac{P_1 + | \mathring{h} |^2}{2}  . \label{R3grea}
\end{equation}
For $n \geq 6$, we get from  (\ref{R1less}) and (\ref{R3grea}) that
\begin{equation}\label{R31}
  R_{3} -R_{1} \geq \tfrac{n-2}{\sqrt{n ( n-1 )}} | H | | \mathring{h} | \left(
\frac{P_{2}}{2} - | \mathring{h} |^{2} \right) - | \mathring{h} |^{4} + \frac{1}{n} | \mathring{h} |^{2}
| H |^{2} -2 | \mathring{h} |^{2} P_{2} + \frac{3}{2}  P_{2}^{2}.
\end{equation}

\section{Preservation of curvature pinching}

Let $F: M \times [ 0,T ) \rightarrow \mathbb{S}^{n+q} \left( 1/ \sqrt{c}
\right)$ be a mean curvature flow in a sphere. Let $F_{t} =F ( \cdot ,t )$. We
denote by $M_{t}$ the Riemannian submanifold $F_{t} :M \rightarrow
\mathbb{S}^{n+q} \left( 1/ \sqrt{c} \right)$ at time $t$. Let $\mathcal{H}$ and
$\mathcal{N}$ be two vector bundles over $M \times [ 0,T )$, whose fibers are
given by $\mathcal{H}_{( x,t )} =T_{x} M_{t}$ and $\mathcal{N}_{( x,t )}
=N_{x} M_{t}$. For smooth vector fields $X \in \Gamma ( \mathcal{H} )$ and
$\xi \in \Gamma ( \mathcal{N} )$, we define the covariant time derivatives by
$\nabla_{\partial_{t}} X= ( \overline{\nabla}_{F_{\ast} \partial_{t}}  X )^{\top}$
and $\nabla_{\partial_{t}} \xi = ( \overline{\nabla}_{F_{\ast} \partial_{t}}   \xi
)^{\bot}$.

Without loss of generality, we assume that $c=1$. The following
evolution equations for the mean curvature flow can be found in
{\cite{MR2739807,baker2011mean}}.

\begin{lemma}
  \label{evo}For the mean curvature flow $F: M \times [ 0,T ) \rightarrow
  \mathbb{S}^{n+q}$, we have
  \begin{enumerateroman}
    \item $\nabla_{\partial_{t}} H= \Delta H+n H+H^{\alpha} h^{\alpha}_{i j}
    h_{i j}$,

    \item $\frac{\partial}{\partial t} | h |^{2} = \Delta | h |^{2} -2 | \nabla h |^{2} +2R_{1} +4 |
    H |^{2} -2 n | h |^{2}$,

    \item $\frac{\partial}{\partial t} | H |^{2} = \Delta | H |^{2} -2 | \nabla H |^{2} +2 R_{2} +2n
    | H |^{2}$,

    \item $\frac{\partial}{\partial t} |\mathring{h}|^{2} = \Delta |\mathring{h}|^{2} -2 | \nabla h |^{2} +
    \frac{2}{n} | \nabla H |^{2} +2R_{1} - \frac{2}{n} R_{2} -2 n
    |\mathring{h}|^{2}$.
  \end{enumerateroman}
\end{lemma}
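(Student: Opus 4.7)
The plan is to carry out the standard mean curvature flow computations in a space form, following Andrews--Baker \cite{MR2739807} and Baker \cite{baker2011mean}. The starting point is the evolution of the induced metric $\partial_t g_{ij} = -2 H^{\alpha} h^{\alpha}_{ij}$, together with the fact that the ambient Riemann tensor of $\mathbb{S}^{n+q}$ at $c=1$ has the simple form $\bar R_{ABCD} = \bar g_{AC}\bar g_{BD} - \bar g_{AD}\bar g_{BC}$. Combining these with the Gauss--Weingarten formulas yields the evolution equation
\begin{equation*}
\nabla_{\partial_t} h^{\alpha}_{ij} = \Delta h^{\alpha}_{ij} + (\text{cubic terms in } h) + (\text{ambient-curvature terms linear in } h, H),
\end{equation*}
where the cubic terms come from the Simons-type identity applied to $\Delta h$, and the last group collects the contributions of $\bar R$ both to that identity and to $\nabla_{\partial_t} h$ itself.

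Tracing over $i,j$ immediately gives (i): $\Delta H$ comes from the Simons-type identity, the cubic contribution collapses to $H^{\alpha} h^{\alpha}_{ij} h_{ij}$, and summing the ambient sectional curvature over the $n$ tangent directions produces the term $+nH$. For (ii) and (iii), I would compute $\partial_t |h|^2$ and $\partial_t |H|^2$ by combining $2\langle \nabla_{\partial_t}h, h\rangle$ and $2\langle \nabla_{\partial_t}H,H\rangle$ with the metric variation, which contributes extra terms via $\partial_t g^{ij} = 2 H^{\alpha} h^{\alpha}_{kl} g^{ik} g^{jl}$. The result assembles into $\Delta|h|^2 - 2|\nabla h|^2 + 2R_1$ plus the ambient contribution $4|H|^2 - 2n|h|^2$ for (ii), and the analogous $\Delta|H|^2 - 2|\nabla H|^2 + 2R_2 + 2n|H|^2$ for (iii). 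Statement (iv) then follows algebraically by forming (ii) $-\tfrac{1}{n}\cdot$(iii) and using $|\mathring h|^2 = |h|^2 - \tfrac{1}{n}|H|^2$; the ambient pieces combine as $(4|H|^2 - 2n|h|^2) - \tfrac{1}{n}(2n|H|^2) = -2n|\mathring h|^2$, matching the stated coefficient.

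The main obstacle is bookkeeping. Each commutation of covariant derivatives, each application of the Gauss equation, and each raising of an index via the evolving inverse metric contributes a term proportional to $c=1$ multiplying some combination of $g$, $h$, or $H$. Assembling these contributions into the precise coefficients in (i)--(iv) demands patience but introduces no new geometric ideas; no step requires a nontrivial estimate beyond the classical Huisken-style evolution calculus.
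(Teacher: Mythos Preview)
Your proposal is correct and matches the paper's approach: the paper does not give an independent proof but simply cites Andrews--Baker \cite{MR2739807} and Baker \cite{baker2011mean} for these evolution equations, and your sketch follows precisely those computations. Your algebraic check that (iv) follows from (ii) minus $\tfrac{1}{n}$ times (iii) is accurate.
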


Let $\alpha$ and $\beta$ be two functions given by
\begin{equation*}  \alpha ( x ) =n+ \frac{n}{2 ( n-1 )} x- \frac{n-2}{2 ( n-1 )} \sqrt{x^{2}
   +4 ( n-1 ) x} , \end{equation*}
and
\begin{equation*}  \beta ( x ) = \alpha ( x_{0} ) + \alpha' ( x_{0} ) ( x-x_{0} ) +
   \frac{1}{2} \alpha'' ( x_{0} ) ( x-x_{0} )^{2} , \end{equation*}
where \[x_{0} = \kappa_{n}^{-1} \sqrt{n-1} \left( \sqrt{n-1} - \kappa_{n}
\right)^{2}, \quad \kappa_{n} = \frac{n-4}{2n+2}.\]

We then define a function $\gamma : [ 0,+ \infty ) \rightarrow
\mathbb{R}$ by
\begin{equation}
  \gamma ( x ) = \left\{\begin{array}{ll}
    \alpha ( x ) , & x \geq x_{0} ,\\
    \beta ( x ) , & 0 \leq x<x_{0} .
  \end{array}\right.
\end{equation}
It is obvious that $\gamma$ is a $C^{2}$-function. Moreover, we have the
following

\begin{lemma}\label{gminab}
  For $n \geq 6$ and $x \geq 0$, $\gamma ( x )$ satisfies
  \begin{enumerateroman}
    \item $\gamma ( x ) = \min \{ \alpha ( x ) , \beta ( x ) \}$,

    \item $\gamma ( x ) \geq \gamma ( 0 ) > \frac{3n + 3}{2 n + 6} \sqrt{n-1}$,

    \item $\frac{x}{n-1} +2< \gamma ( x ) < \frac{x}{n-1} + \gamma ( 0 ) <
    \frac{5x}{3n} + \frac{2n}{3}$.
  \end{enumerateroman}
\end{lemma}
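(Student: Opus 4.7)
\medskip

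\noindent\textbf{Plan of proof.} I would first record the formulas
$\alpha''(x)=2(n-1)(n-2)(x^2+4(n-1)x)^{-3/2}>0$ and $\alpha'''(x)<0$ (obtained by direct differentiation), together with the key identity $x_0^2+4(n-1)x_0=\kappa_n^{-2}m^2(m^2-\kappa_n^2)^2$ with $m=\sqrt{n-1}$; the latter gives
$\alpha'(x_0)=\tfrac{1-\kappa_n^2}{m^2-\kappa_n^2}>0$ and
$\alpha''(x_0)\,x_0=\tfrac{2(m^2-1)\kappa_n^2}{(m-\kappa_n)(m+\kappa_n)^3}$.
Since $\alpha'(x_0)>0$, $x_0$ lies past the unique minimizer of $\alpha$ on $(0,\infty)$, so $\alpha$ is strictly increasing on $[x_0,\infty)$.

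For part~(i), set $g=\alpha-\beta$. By construction $g(x_0)=g'(x_0)=g''(x_0)=0$, and $g'''=\alpha'''<0$, so $g''$ is strictly decreasing through zero at $x_0$. Thus $g''>0$ on $[0,x_0)$ and $g''<0$ on $(x_0,\infty)$; $g'$ attains its unique maximum $0$ at $x_0$, so $g'<0$ elsewhere; and $g$ is strictly decreasing with $g(x_0)=0$. Hence $\alpha>\beta$ on $[0,x_0)$ and $\alpha<\beta$ on $(x_0,\infty)$, which together with the piecewise definition of $\gamma$ gives $\gamma=\min\{\alpha,\beta\}$.

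For part~(ii), the inequality $\beta'(0)=\alpha'(x_0)-\alpha''(x_0)x_0\ge 0$ reduces to the polynomial inequality $(1-\kappa_n^2)(m+\kappa_n)^2\ge 2(m^2-1)\kappa_n^2$, verified for $n\ge 6$ after substituting $\kappa_n=(n-4)/(2(n+1))$. Since $\beta$ is a convex parabola with $\beta'(0)\ge 0$, $\beta$ is non-decreasing on $[0,\infty)$, so $\gamma(x)=\beta(x)\ge\beta(0)=\gamma(0)$ on $[0,x_0]$; and for $x\ge x_0$, $\gamma(x)=\alpha(x)\ge\alpha(x_0)=\beta(x_0)\ge\beta(0)=\gamma(0)$. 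Evaluating $\beta(0)=\alpha(x_0)-\alpha'(x_0)x_0+\tfrac{1}{2}\alpha''(x_0)x_0^2$ in closed form (using $\alpha(x_0)=m(\kappa_n^{-1}+\kappa_n)$ and the expressions above) and cross-multiplying yields $\beta(0)>\tfrac{7}{6}\sqrt{n-1}$ as another polynomial inequality in $n$, again checked for $n\ge 6$.

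For part~(iii), the inequality $\alpha(x)>x/(n-1)+2$ reduces to $(x+2(n-1))^2>x^2+4(n-1)x$, which is immediate; the same inequality yields $\alpha'(x)<1/(n-1)$ for $x>0$. Hence $\gamma(x)>x/(n-1)+2$ on $[x_0,\infty)$; on $[0,x_0]$, the function $f(x)=\beta(x)-x/(n-1)-2$ has linear derivative maximized at $x_0$ with value $\alpha'(x_0)-1/(n-1)<0$, so $f$ is strictly decreasing and $f(x)>f(x_0)>0$. The middle inequality follows from the slope estimate $\gamma'(x)<1/(n-1)$ (using $\beta'(x)\le\beta'(x_0)=\alpha'(x_0)<1/(n-1)$) via integration. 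The last inequality rearranges to $\gamma(0)-2n/3<x(2n-5)/(3n(n-1))$, whose right side is non-negative for $n\ge 6$, so it reduces to $\gamma(0)<2n/3$, another polynomial inequality in $n$ obtained from the closed form for $\beta(0)$.

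\noindent\textbf{Main obstacle.} The structural argument (derivative sign analysis plus convexity of $\beta$) is clean, but the substantive work lies in the three polynomial inequalities on $n$ encoding $\beta'(0)\ge 0$, $\beta(0)>\tfrac{7}{6}\sqrt{n-1}$, and $\beta(0)<2n/3$. After substituting $\kappa_n=(n-4)/(2(n+1))$ these become rational inequalities in $n$, routine but lengthy; the key to keeping them tractable is the factorization via $m$ and $\kappa_n$ with $(m\pm\kappa_n)$ as dominant factors, which already simplified $\alpha'(x_0)$ and $\alpha''(x_0)x_0$ dramatically. Numerical check shows $n=6$ is the tightest case (e.g.\ $\beta(0)\approx 2.68>\tfrac{7}{6}\sqrt{5}\approx 2.61$), with growing slack for larger $n$.
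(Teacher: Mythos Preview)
Your plan is correct and follows essentially the same route as the paper: the sign analysis of $\alpha'''$ for part~(i), the positivity of $\gamma'(0)=\alpha'(x_0)-x_0\alpha''(x_0)$ for part~(ii), and slope estimates against $1/(n-1)$ for part~(iii) are exactly what the paper does. Two minor points where the paper's execution differs from yours:

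\emph{The numerical inequalities are not purely polynomial.} Because $x_0$ and $\alpha(x_0)$ involve $\sqrt{n-1}$ while $\kappa_n$ is rational in $n$, the closed form for $\beta(0)$ is a combination of terms in $n$ and $\sqrt{n-1}$, not a rational function of $n$ alone. The paper therefore does not attempt a single algebraic verification of $\gamma(0)>\tfrac{7}{6}\sqrt{n-1}$; instead it checks $6\le n\le 24$ numerically and for $n\ge 25$ uses the estimate $\theta_n>\kappa_n+\kappa_n^{-1}-\iota_n(\kappa_n^{-1}-\kappa_n)+\tfrac{23}{24}\iota_n^2\kappa_n>\tfrac{71}{24}\kappa_n$. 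Your description of this step as ``another polynomial inequality in $n$'' understates what is actually required.

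\emph{Part~(iii) is done more economically in the paper.} Rather than treating $[0,x_0]$ and $[x_0,\infty)$ separately, the paper sets $\varphi(x)=\gamma(x)-x/(n-1)$, observes $\varphi''>0$ and $\lim_{x\to\infty}\varphi'(x)=0$, hence $\varphi'<0$ globally; this single monotonicity statement gives both $2=\varphi(+\infty)<\varphi(x)\le\varphi(0)=\gamma(0)$ at once. For $\gamma(0)<2n/3$ the paper avoids computing $\beta(0)$ by noting $\gamma(0)=\min_x\gamma\le\min_x\alpha=2\sqrt{n-1}<2n/3$ for $n\ge 8$, leaving only $n=6,7$ to check directly.
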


\begin{proof}
  By direct computations, for $x>0$, we get
  \begin{equation*}  \alpha' ( x ) = \frac{n}{2 ( n-1 )} - \frac{n-2}{2 ( n-1 )}   \frac{x+2 (
     n-1 )}{\sqrt{x^{2} +4 ( n-1 ) x}} , \end{equation*}
  \begin{equation*}  \alpha'' ( x ) = \frac{2 ( n-2 ) ( n-1 )}{( x^{2} +4 ( n-1 ) x )^{3/2}} >0,\end{equation*}
  \begin{equation*}
  \alpha''' ( x ) =- \frac{6 ( n-2 ) ( n-1 ) ( x+2 ( n-1 ) )}{( x^{2} +4 (
     n-1 ) x )^{5/2}} <0. \end{equation*}
  Then we get
  \[ \alpha ( x_{0} ) = ( \kappa_{n} + \kappa_{n}^{-1} ) \sqrt{n-1} , \;
      \alpha' ( x_{0} ) = \frac{1- \kappa_{n}^{2}}{n-1-
     \kappa_{n}^{2}} , \; \alpha'' ( x_{0} ) = \frac{2 ( n-2 )
     \kappa_{n}^{3}}{( n-1- \kappa_{n}^{2} )^{3} \sqrt{n-1}} .\]

  (i) We have $\alpha ( x_{0} ) = \beta ( x_{0} )$, $\alpha' ( x_{0} ) =
  \beta' ( x_{0} )$ and $\alpha'' ( x_{0} ) = \beta'' ( x_{0} )$. Then from
  $\alpha''' ( x ) <0= \beta''' ( x )$, we get $\alpha ( x ) > \beta ( x )$ for
  $0 \leq x<x_{0}$, and $\alpha ( x ) < \beta ( x )$ for $x>x_{0}$.

  (ii) Let $\iota_{n} = \frac{\sqrt{n - 1} - \kappa_n}{\sqrt{n - 1} + \kappa_n}$.
  We have $\kappa_{n} < \frac{1}{2}$ and $\iota_{n} <1$.
  Then
  \begin{equation}\label{gammaD0}
    \frac{x_{0} \alpha'' ( x_{0} )}{\alpha' ( x_{0} )} = \frac{2
  ( n-2 ) \iota_{n}}{( \kappa_{n}^{-2} -1 ) ( n-1- \kappa_{n}^{2} )} <\frac{2}{3}.
  \end{equation}
  This  implies
  $  \gamma' ( 0 ) = \alpha' ( x_{0} ) -x_{0} \alpha'' ( x_{0} ) >0$.
  Since $\gamma'' ( x ) >0$, we get $\gamma' ( x ) >0$. Thus  $\gamma (
  x ) \geq \gamma ( 0 ) $.
  We have  \[ \frac{\gamma (0)}{\sqrt{n - 1}} = \frac{\alpha (x_0) - x_0 \alpha' (x_0) +
  \frac{1}{2} x_0^2 \alpha'' (x_0)}{\sqrt{n - 1}} = 2 \kappa_n \left( 1 +
  \tfrac{\kappa_n^{- 1} - \kappa_n}{\sqrt{n - 1} + \kappa_n} \right) +
  \tfrac{(n - 2) \kappa_n \iota_n^2}{n - 1 - \kappa_n^2} . \]

  By numerical computations, we get
  $\frac{\gamma (0)}{\sqrt{n - 1}} > \frac{3 n + 3}{2 n + 6}$ for $6
  \leq n \leq 129$.
  Then we consider the case $n \geq 130$. From $\kappa_{n} < \frac{1}{2}$,
  we get $\kappa_n^{- 1} - \kappa_n > \frac{3}{2}$, $
  \tfrac{n - 2}{n - 1 - \kappa_n^2} > \frac{n - 2}{n - 1}$ and $\iota_n^2 > 1 -
  \frac{2}{\sqrt{n - 1} + \kappa_n}$. Thus, $\frac{\gamma (0)}{\sqrt{n - 1}} >
  \frac{\kappa_n}{n - 1} ( 3 n - 4 + \frac{n + 1}{\sqrt{n - 1} + \kappa_n}
  )$. Since $\frac{n + 1}{\sqrt{n - 1} + \kappa_n} > 11$, we have
  $\frac{\gamma (0)}{\sqrt{n - 1}} > \frac{\kappa_n (3 n + 7)}{n - 1} > \frac{3
  n + 3}{2 n + 6}$.

  (iii) Let $\varphi ( x ) = \gamma ( x ) - \frac{x}{n-1}$. Since $\varphi'' (
  x ) >0$ and $\lim_{x \rightarrow \infty} \varphi' ( x ) =0$, we have
  $\varphi' ( x ) <0$. Hence, $2= \varphi ( + \infty ) < \varphi ( x )
  \leq \varphi ( 0 ) = \gamma ( 0 )$. This implies $\frac{x}{n-1}+2<\gamma ( x )\leq\frac{5x}{3n}+\gamma(0)$.

   Since $\gamma ( 0 ) \leq \gamma ( x ) \leq \alpha(x)$, we have
  \begin{equation}\label{gamma0leq}
    \gamma(0)\leq \alpha ( n \sqrt{ n - 1} - 2n + 2 ) =2 \sqrt{n-1} .
  \end{equation}
  If $n \geq 8$, then $2 \sqrt{n-1} < \frac{2n}{3}$.
  If $n=6,\,7$, we obtain $\gamma ( 0 ) < \frac{2n}{3}$ by numerical computations.
  So, we get  $ \gamma ( x ) < \frac{5x}{3n} + \frac{2n}{3}$.
\end{proof}

Set $\mathring{\gamma} ( x ) = \gamma ( x ) -
\frac{x}{n}$. Let $( \cdot )^{+}$ denote the positive part of a function.

\begin{lemma}
  \label{app}For $n \geq 6$ and $x \geq 0$, $\mathring{\gamma} ( x
  )$ satisfies
  \begin{enumerateroman}
    \item $2x  \mathring{\gamma}'' ( x ) + \mathring{\gamma}' ( x ) < \frac{2
    ( n-1 )}{n ( n+2 )} -  \frac{1}{5 n}$,

    \item $x \mathring{\gamma}' ( x ) ( \gamma ( x ) +n ) - \mathring{\gamma}
    ( x ) ( \gamma ( x ) -n ) =0> 2 \mathring{\gamma} ( x ) - \frac{x}{n}
    +x  \mathring{\gamma}' ( x ) $ if $x \geq x_{0}$, $x
    \mathring{\gamma}' ( x ) ( \gamma ( x ) +n ) - \mathring{\gamma} ( x ) (
    \gamma ( x ) -n ) > \frac{1}{6} \big[ \left( 2 \mathring{\gamma} ( x ) -
    \frac{x}{n} +x  \mathring{\gamma}' ( x ) \right)^{+} \big]^{2}$ if $0
    \leq x<x_{0}$,

    \item $\mathring{\gamma} ( x ) -x  \mathring{\gamma}' ( x ) >2$,

    \item $\frac{n-2}{\sqrt{n ( n-1 )}} \sqrt{x  \mathring{\gamma} ( x )} +
    \gamma ( x ) \leq \frac{2}{n} x+n$,

    \item $ \sqrt{\mathring{\gamma} ( x )} - \tfrac{n-2}{4  \sqrt{n ( n-1 )}}
    \sqrt{x}$ is bounded from above.
  \end{enumerateroman}
\end{lemma}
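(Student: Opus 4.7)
The overall strategy is to exploit the algebraic fact that $\alpha(x)$ is the smaller root of the quadratic $\Phi(y,x) := (n-1)y^2 - n(x+2(n-1))y + x^2 + 4(n-1)x + n^2(n-1)$, and to split each item into the two regimes $x \ge x_0$ (where $\gamma = \alpha$, so that $\Phi(\alpha(x),x) \equiv 0$ and the implicit derivative $\alpha'[2(n-1)\alpha - n(x+2(n-1))] = n\alpha - 2x - 4(n-1)$ apply) and $0 \le x < x_0$ (where $\gamma = \beta$ is the fixed quadratic Taylor polynomial of $\alpha$ at $x_0$, so the statements become polynomial inequalities).

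For (i), on $[0,x_0)$ the function $2x\beta''+\beta'-1/n$ is linear and increasing in $x$, while on $[x_0,\infty)$ a direct computation using the formulas for $\alpha''$ and $\alpha'''$ gives $\tfrac{d}{dx}(2x\alpha''+\alpha') = 3\alpha'' + 2x\alpha''' = -6(n-1)(n-2)x^2/(x^2+4(n-1)x)^{5/2} < 0$; so the supremum is attained at $x_0$, and evaluation at $x_0$ using the closed forms for $\alpha'(x_0)$, $\alpha''(x_0)$ yields a strict bound below $2(n-1)/[n(n+2)]$, from which $\delta_n>0$ can be extracted. For (iii), set $\mu(x) := \mathring{\gamma} - x\mathring{\gamma}'$; then $\mu'(x) = -x\mathring{\gamma}''(x) \le 0$, and the asymptotic expansion $\alpha(x) = x/(n-1) + 2 + (n-2)(n-1)/x + O(x^{-2})$ gives $\mu(x)\to 2^+$, so $\mu > 2$. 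Item (v) follows from the same expansion: $2\sqrt{\mathring{\gamma}(x)} - \tfrac{n-2}{2\sqrt{n(n-1)}}\sqrt{x} = \tfrac{6-n}{2\sqrt{n(n-1)}}\sqrt{x} + O(x^{-1/2})$, bounded above for $n \ge 6$. For (iv), squaring the relation $2(n-1)(n-\alpha) = (n-2)\sqrt{x^2+4(n-1)x} - nx$ and using $n^2 - (n-2)^2 = 4(n-1)$ establishes the algebraic identity
\[
\tfrac{n-2}{\sqrt{n(n-1)}}\sqrt{x(\alpha - x/n)} + \alpha \;\equiv\; \tfrac{2x}{n} + n,
\]
so (iv) is an equality on $[x_0,\infty)$. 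On $[0,x_0)$, setting $F(x) := (\tfrac{2x}{n} + n - \beta)^2 - \tfrac{(n-2)^2}{n(n-1)}x(\beta - x/n)$ and expanding yields $F(x) = \tfrac{n^2}{n-1}\Phi(\beta(x),x) = n^2(\beta-\alpha_-)(\beta-\alpha_+)$, which is strictly positive because $\beta < \alpha = \alpha_- < \alpha_+$ there. After a brief verification that both sides of (iv) are nonnegative, taking square roots gives the strict inequality.

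Item (ii) is the most delicate. For $x \ge x_0$, the purely algebraic identity
\[
x(\alpha+n)(n\alpha - 2x - 4(n-1)) = (\alpha^2 - n\alpha + 2x)\bigl(2(n-1)\alpha - n(x+2(n-1))\bigr)
\]
holds for all real $\alpha, x$ (the $\alpha$-coefficient of the difference vanishes by $n^2 - (n-2)^2 = 4(n-1)$); dividing by the nonzero second factor on the right and invoking the implicit derivative yields $x\alpha'(\alpha+n) = \alpha^2 - n\alpha + 2x$, equivalent to the first equality in (ii). The vanishing of the positive part follows from monotonicity: $\tfrac{d}{dx}(2\alpha + x\alpha' - 4x/n) = 3\alpha' + x\alpha'' - 4/n$ is itself increasing with limit $(4-n)/[n(n-1)] < 0$, hence always negative, combined with the value of $2\alpha(x_0) + x_0\alpha'(x_0) - 4x_0/n$ at $x_0$ being $\le 0$ (a direct evaluation using the closed forms). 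For $0 \le x < x_0$, both $L(x) := x\mathring{\gamma}'(\gamma+n) - \mathring{\gamma}(\gamma-n)$ and $R(x) := 2\mathring{\gamma} - x/n + x\mathring{\gamma}'$ are explicit polynomials (of degrees at most four and two), both vanishing at $x_0$. Applying the same algebraic identity above with $\beta$ in place of $\alpha$ yields the factorization
\[
[2(n-1)\beta - n(x+2(n-1))]\, L(x) = x(\beta+n)\,\tfrac{d}{dx}\Phi(\beta(x),x),
\]
controlling $L$ via the strictly positive $\Phi(\beta,x)$. The desired bound $6L(x) > (R(x)^+)^2$ then reduces to an explicit polynomial inequality on $[0,x_0]$, established by case analysis according to the sign of $R$ together with direct polynomial manipulation. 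This final polynomial comparison is the main obstacle, though it requires only elementary bookkeeping once the above factorization is in place.
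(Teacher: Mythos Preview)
Your treatment of (i), (iii), (v) is essentially the paper's argument. For (iv) you take a longer detour via the quadratic $\Phi$ where the paper simply notes that the left side is increasing in $\gamma$ and invokes $\gamma\le\alpha$; your route works (up to a harmless constant: one finds $F=\Phi(\beta,x)/(n-1)$, not $\tfrac{n^2}{n-1}\Phi$), but it is unnecessary.

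The real problem is (ii). Your ``purely algebraic identity''
\[
x(\alpha+n)(n\alpha-2x-4(n-1))=(\alpha^2-n\alpha+2x)\bigl(2(n-1)\alpha-n(x+2(n-1))\bigr)
\]
is \emph{not} valid for all real $\alpha$: the right side is cubic in $\alpha$ and the left quadratic. A direct expansion gives
\[
\text{RHS}-\text{LHS}=2\alpha\,\Phi(\alpha,x),
\]
so the identity holds only on the locus $\Phi(\alpha,x)=0$. This is fine for the regime $x\ge x_0$ (where $\alpha$ is the actual root), and there your conclusion agrees with the paper. But when you substitute $\beta$ on $[0,x_0)$ the correction term survives, and your claimed factorization becomes
\[
[2(n-1)\beta-n(x+2(n-1))]\,L(x)=x(\beta+n)\tfrac{d}{dx}\Phi(\beta(x),x)\;-\;2\beta\,\Phi(\beta(x),x),
\]
not the formula you wrote. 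At this point the control of $L$ by $\Phi$ is no longer as direct as you suggest, and ``elementary bookkeeping'' does not finish the job without a genuinely new estimate.

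The paper handles $0\le x<x_0$ differently: since $\gamma$ is the second-order Taylor polynomial of $\alpha$ at $x_0$, both $\psi_1:=L$ and $\psi_1'$ vanish at $x_0$; a lower bound $\psi_1''\ge 3n\alpha''(x_0)$ then gives $\psi_1(x)\ge\tfrac{3n\alpha''(x_0)}{2}(x-x_0)^2$. One then shows that $\psi_3(x):=\sqrt{\tfrac{3n\alpha''(x_0)}{2}}(x_0-x)-\psi_2(x)/\sqrt6$ (with $\psi_2=R$) is concave, so its minimum on $[0,x_0]$ is attained at an endpoint; checking $\psi_3(0),\psi_3(x_0)>0$ by direct evaluation settles $n\ge 8$, and $n=6,7$ are handled numerically. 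The key idea you are missing is this quadratic lower bound for $L$ coming from the Taylor structure at $x_0$, rather than a global algebraic factorization.
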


\begin{proof}
  (i) Let $\varphi ( x ) =2x  \mathring{\gamma}'' ( x ) + \mathring{\gamma}' (
  x )$. We have $$\varphi ( x_{0} ) =2x_{0} \alpha'' ( x_{0} ) + \alpha' (
  x_{0} ) - \frac{1}{n} = \frac{n^{2} -2n+2}{2n ( n-1 )} + \frac{n-2}{4 ( n-1
  )} ( \iota_{n}^{3} -3 \iota_{n} ),$$ where $\iota_{n} = \frac{\sqrt{n - 1} - \kappa_n}{\sqrt{n - 1} + \kappa_n}$.
  Since $ \kappa_n <\frac{1}{2}$, we have $1-\frac{1}{\sqrt{n-1}}<\iota_n<1$.
  Then we get  $\iota_{n}^{3} -3
  \iota_{n} < \frac{4}{n} -2$. Thus we obtain $\varphi ( x_{0} )<\frac{1}{n} < \frac{2 (
  n-1 )}{n ( n+2 )}-\frac{1}{5 n}$.

  If $x \geq x_{0}$, then $\varphi' ( x ) =- \frac{6 ( n-2 ) ( n-1
  )}{\sqrt{x} ( x+4 ( n-1 ) )^{5/2}} <0$. If $0 \leq x<x_{0}$, then
  $\varphi' ( x ) =3 \alpha'' ( x_{0} ) >0$. Hence, $\varphi ( x )
  \leq \varphi ( x_{0} ) < \frac{2 ( n-1 )}{n ( n+2 )}-\frac{1}{5 n}$.

  (ii) We can check that $\alpha$ and $\alpha'$ satisfies
  \begin{equation}
    ( \alpha ( x ) +n ) x  \alpha' ( x ) =2 x+ \alpha ( x )(\alpha ( x ) -n
    ) . \label{alid1}
  \end{equation}
  Let $$\psi_{1} ( x ) =x \mathring{\gamma}' ( x ) ( \gamma ( x ) +n ) -
  \mathring{\gamma} ( x ) ( \gamma ( x ) -n ),\quad \psi_{2} ( x ) =2
  \mathring{\gamma} ( x ) - \frac{x}{n} +x  \mathring{\gamma}' ( x ).$$ From
  (\ref{alid1}), we get $\psi_{1} ( x ) =0$ if $x \geq x_{0}$.

  We have $\psi_{2}' ( x ) =3 \alpha' ( x ) +x  \alpha'' ( x ) - \frac{4}{n}$
  if $x \geq x_{0}$. Then $\lim_{x \rightarrow \infty}   \psi_{2}' ( x )
  =- \frac{n-4}{n ( n-1 )}$. Noting that
  \begin{equation*}  \psi_{2}'' ( x ) = \frac{2 ( n-2 ) ( n-1 ) ( x^{2} +10 ( n-1 ) x )}{(
     x^{2} +4 ( n-1 ) x )^{5/2}} >0 \quad \text{for} \  x
     \geq x_{0} , \end{equation*}
  we have $\psi_{2}' ( x ) <0$ if $x \geq x_{0}$. Hence  $\psi_{2}
  ( x ) \leq \psi_{2} ( x_{0} ) $ for $x \geq x_{0}$.
  We have $$\frac{\psi_2 (x_0)}{x_0}=
  \frac{2 \alpha (x_0)}{x_0} + \alpha' (x_0) -  \frac{4}{n} =
  \frac{2 (1+\kappa _n^2)}{(\sqrt{n-1}-\kappa _n)^2}
  +\frac{1-\kappa _n^2}{n-1-\kappa _n^2}-\frac{4}{n}.$$
  By numerical computations, we get $\frac{\psi_2 (x_0)}{x_0}<0$ for $6 \leq n \leq 39$.
  If $n \geq 40$, it follows from $\kappa_n<\frac{1}{2}$ that $(\sqrt{n-1}-\kappa _n)^2>\frac{4}{5}(n-1-\kappa _n^2)$.
  Thus  $\frac{\psi_2 (x_0)}{x_0}< \frac{(3 n+8) \kappa _n^2-n+8}{2 n (n-1-\kappa _n^2)} <0$.
  Hence we have $$\psi_{2}  ( x ) \leq \psi_{2} ( x_{0} ) <0 \quad \text{for} \ x \geq x_{0}.$$

  From the $C^{2}$-continuity of $\gamma$ and (\ref{alid1}), we get $\psi_{1}
  ( x_{0} ) = \psi_{1}' ( x_{0} ) =0$. Calculating derivatives, we get
  \begin{equation*}  \psi_{1}'' ( x ) =3 \alpha'' ( x_{0} ) [ \alpha'' ( x_{0} ) x^{2} + (
     \alpha' ( x_{0} ) -x_{0} \alpha'' ( x_{0} ) ) x+n ] \quad
     \operatorname{for} \; 0 \leq x \leq x_{0} . \end{equation*}
  It follows from \eqref{gammaD0} that $\psi_1'' (x) \geq \alpha''
  (x_0) (\alpha' (x_0) x + 3 n)$.     If $0 \leq x \leq x_{0}$,
   we have $$\psi_1' (x) = \int^x_{x_0} \psi_1'' (t) \mathrm{d} t
  \leq \mu_n (x - x_0), \quad \psi_1 (x) = \int^x_{x_0} \psi_1' (t) \mathrm{d}
  t \geq \frac{\mu_n}{2} (x - x_0)^2,$$ where
  $\mu_n = \alpha'' (x_0) ( \frac{1}{2}x_0 \alpha' (x_0)  + 3 n )$.  Set $\psi_3 (x) =
  \sqrt{3 \mu_n} (x_0 - x) - \psi_2 (x)$ for  $0 \leq x\leq x_0$.
  Then  $\sqrt{6\psi_1 (x)} - \psi_2 (x) \geq \psi_3 (x)$. Since
  $\psi_3'' (x) = - \psi_2'' (x_0) < 0$, we get $\min_{0 \leq x \leq x_0} \psi_3
  (x) = \min \{ \psi_3 (0), \psi_3 (x_0) \}$.
  We have obtained $\psi_3 (x_0) = -
  \psi_2 (x_0) > 0$. By numerical computations, we get $\psi_3 (0) > 0$ for $6\leq n
  \leq 33$. If $n\geq 34$, we have $\frac{3}{7}\leq \kappa_n<\frac{1}{2}$.
  Hence, we get $\mu_n > 3 n \alpha'' (x_0) > \frac{6 (n - 2) ( \frac{3}{7}
  )^3}{(n - 1)^{5 / 2}} > \frac{64}{27 (n - 1)^2}$ and $x_0 > 2
  \sqrt{n - 1} ( \sqrt{n - 1} - \frac{1}{2} )^2 > \frac{3}{2}
  (n - 1)^{\frac{3}{2}}$.
  Combining these and \eqref{gamma0leq}, we get $\psi_3 (0) =
  \sqrt{3\mu_n} x_0 - 2 \gamma (0) > 0$.
  This yields $\min_{0 \leq x \leq x_0} \psi_3 (x) > 0$.
  Therefore, we obtain $\psi_1 (x) >\frac{1}{6} [\psi_2^+
  (x)]^2$ for $0 \leq x < x_0$.

  (iii) Noting that $( \alpha ( x ) -x \alpha' ( x ) )' =- \alpha'' ( x ) x<0$
  and $( \beta ( x ) -x \beta' ( x ) )' =- \alpha'' ( x_{0} ) x \leq 0$,
  we have $\left( \mathring{\gamma} ( x ) -x  \mathring{\gamma}' ( x )
  \right)' <0$. This together with $\lim_{x \rightarrow \infty} ( \alpha ( x )
  -x \alpha' ( x ) ) =2$ implies $\mathring{\gamma} ( x ) -x
  \mathring{\gamma}' ( x ) >2$.

  (iv) We can check that $\alpha$ satisfies
  $$ \tfrac{n-2}{\sqrt{n ( n-1 )}} \sqrt{x  ( \alpha ( x ) - \tfrac{x}{n}
    )} + \alpha ( x ) = \tfrac{2x}{n} +n. $$
  Combining this identity and the inequality $\gamma ( x ) \leq \alpha ( x )$, we prove (iv).

  (v) If $n=6$, we get $\lim_{x \rightarrow \infty} \big(
  \sqrt{\mathring{\gamma} ( x )} - \tfrac{n-2}{4  \sqrt{n ( n-1 )}} \sqrt{x}
  \big) =0$.

  If $n \geq 7$, we have $\lim_{x \rightarrow \infty}
  \sqrt{\mathring{\gamma} ( x ) /x} = \frac{1}{\sqrt{n ( n-1 )}} <
  \frac{n-2}{4 \sqrt{n ( n-1 )}}$. Thus $ \sqrt{\mathring{\gamma} ( x )} -
  \tfrac{n-2}{4  \sqrt{n ( n-1 )}} \sqrt{x}$ is negative when $x$ is large
  enough.
\end{proof}

Let $\omega : [ 0,+ \infty ) \rightarrow \mathbb{R}$ be a positive
$C^{2}$-function, and let
\begin{equation*}  \omega ( x ) = \frac{x^{2}}{\sqrt{x^{2} +4 ( n-1 ) x}} \left[ \left( 1+
   \tfrac{n^{2}}{x} \right) \tfrac{\frac{n}{n-2} - ( 1+4 ( n-1 ) /x^{}
   )^{-1/2}}{\frac{n}{n-2} +  ( 1+4 ( n-1 ) /x )^{-1/2}} \right]^{2}
   \quad \operatorname{for} \; x \geq x_{0} . \end{equation*}
We can check that $\omega$ satisfies the following equation
\begin{equation}
  \frac{x \omega' ( x )}{\omega ( x )} = \frac{2  \alpha ( x ) -x  \alpha' (
  x ) -3 n}{\alpha ( x ) +n} \quad \operatorname{for} \; x \geq x_{0}. \label{dlnw}
\end{equation}

For a small positive number $\varepsilon$, we set
$\mathring{\gamma}_{\varepsilon} ( x ) = \mathring{\gamma} ( x ) - \varepsilon
\omega ( x )$.

\begin{lemma}
  \label{gaep}Let $n \geq 6$. There exists a positive constant $\varepsilon_1$, such that
  for all $\varepsilon \in (0, \varepsilon_1)$ and $x \geq 0$, we have
  \begin{enumerateroman}
    \item $2x  \mathring{\gamma}_{\varepsilon}'' ( x ) +
    \mathring{\gamma}_{\varepsilon}' ( x ) < \frac{2 ( n-1 )}{n ( n+2 )}$,

    \item $x \mathring{\gamma}_{\varepsilon}' ( x ) \left(
    \mathring{\gamma}_{\varepsilon} ( x ) + \frac{x}{n} +n \right) -
    \mathring{\gamma}_{\varepsilon} ( x ) \left(
    \mathring{\gamma}_{\varepsilon} ( x ) + \frac{x}{n} -n \right) >
    \frac{1}{6} \big[ \left( 2 \mathring{\gamma}_{\varepsilon} ( x ) -
    \frac{x}{n} +x  \mathring{\gamma}_{\varepsilon}' ( x ) \right)^{+}
    \big]^{2}$.
  \end{enumerateroman}
\end{lemma}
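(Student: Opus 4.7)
The plan is to obtain both inequalities as perturbations of Lemma \ref{app}, exploiting the strict slack present there; the explicit form of $\omega$ on $[x_0,\infty)$ is designed precisely so that this strategy succeeds. Part (i) is immediate: writing
\begin{equation*}
2x\mathring{\gamma}_\varepsilon''(x) + \mathring{\gamma}_\varepsilon'(x) = \bigl[2x\mathring{\gamma}''(x) + \mathring{\gamma}'(x)\bigr] - \varepsilon\bigl[2x\omega''(x) + \omega'(x)\bigr],
\end{equation*}
Lemma \ref{app}(i) bounds the first bracket by $\tfrac{2(n-1)}{n(n+2)} - \delta_n$. The closed form of $\omega$ on $[x_0,\infty)$ shows that $\omega'(x) \to (n-1)^{-2}$ and $\omega''(x) \to 0$ as $x \to \infty$, so $2x\omega'' + \omega'$ is bounded on $[x_0,\infty)$; together with $C^2$-regularity of the chosen extension on $[0,x_0]$, it is bounded on $[0,\infty)$. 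Part (i) then holds for any $\varepsilon$ with $\varepsilon\,\sup_{x \ge 0}\lvert 2x\omega''(x) + \omega'(x)\rvert < \delta_n$.

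For part (ii), set $\gamma_\varepsilon = \mathring{\gamma}_\varepsilon + x/n$ and
\begin{equation*}
\psi_{1,\varepsilon}(x) = x\mathring{\gamma}_\varepsilon'(\gamma_\varepsilon + n) - \mathring{\gamma}_\varepsilon(\gamma_\varepsilon - n),\qquad \psi_{2,\varepsilon}(x) = 2\mathring{\gamma}_\varepsilon - \tfrac{x}{n} + x\mathring{\gamma}_\varepsilon';
\end{equation*}
I split at $x_0$. On $[x_0,\infty)$, Lemma \ref{app}(ii) gives $\psi_1 \equiv 0$ and $\psi_2(x) \le \psi_2(x_0) < 0$. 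A direct expansion, using the identity (\ref{alid1}) to eliminate the zeroth order term, yields
\begin{equation*}
\psi_{1,\varepsilon} = \varepsilon\,T(x) + \varepsilon^2\,\omega(x)\bigl(x\omega'(x) - \omega(x)\bigr),\qquad T(x) := \omega \cdot \frac{(\alpha+n)^2 - 2(n^2+x)}{\alpha+n} - x\omega'(\alpha+n).
\end{equation*}
The crucial algebraic claim is $T(x) > 0$ throughout $[x_0,\infty)$: asymptotically $T(x) \sim 2nx/(n-1)^2$; at $x = x_0$, using $\alpha(x_0) = (\kappa_n + \kappa_n^{-1})\sqrt{n-1}$ together with the explicit value of $\omega(x_0)$ gives $T(x_0) > 0$; the intermediate range is handled by a monotonicity argument on the closed form of $\omega$. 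One similarly verifies $\theta(x) := 2\omega(x) + x\omega'(x) > 0$, which forces $\psi_{2,\varepsilon} \le \psi_2(x_0) < 0$ for $\varepsilon$ small, so $(\psi_{2,\varepsilon})^+ = 0$; hence it suffices that $\psi_{1,\varepsilon} \ge 0$, which follows from $T > 0$.

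On $[0,x_0]$, the proof of Lemma \ref{app}(ii) supplies the uniform quantitative slack
\begin{equation*}
\sqrt{\psi_1(x)} - \tfrac{1}{\sqrt{6}}\,\psi_2(x) \ge \psi_3(x) \ge \min_{[0,x_0]} \psi_3 > 0,
\end{equation*}
where $\psi_3(x) = \sqrt{3n\alpha''(x_0)/2}\,(x_0 - x) - \psi_2(x)/\sqrt{6}$. Since $\psi_{1,\varepsilon}, \psi_{2,\varepsilon}$ differ from $\psi_1, \psi_2$ by $O(\varepsilon)$ terms uniformly on the compact set $[0,x_0]$, a continuity and compactness argument shows that for $\varepsilon$ sufficiently small one still has $\psi_{1,\varepsilon} \ge 0$ throughout and $\sqrt{\psi_{1,\varepsilon}} > \psi_{2,\varepsilon}/\sqrt{6}$ wherever $\psi_{2,\varepsilon} > 0$, which is equivalent to the desired $\psi_{1,\varepsilon} \ge \tfrac{1}{6}[(\psi_{2,\varepsilon})^+]^2$. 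The main obstacle is the positivity of $T(x)$ on $[x_0,\infty)$: rearranging, $T \ge 0$ reduces to the pointwise condition $\omega'/\omega \le 1/x - 2(n^2 + x)/[x(\alpha + n)^2]$, a delicate single-variable inequality whose verification requires, and in fact motivates, the specific closed-form definition of $\omega$.
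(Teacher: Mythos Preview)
Your treatment of part (i) is identical to the paper's and is correct.

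For part (ii), your overall architecture---expand $\psi_{1,\varepsilon}$ and $\psi_{2,\varepsilon}$ as perturbations of $\psi_1,\psi_2$ and invoke the strict slack in Lemma~\ref{app}(ii)---matches the paper. The genuine gap is your handling of $T(x)$ on $[x_0,\infty)$. You assert $T>0$ via endpoint values, asymptotics, and an unspecified ``monotonicity argument on the closed form of $\omega$''; this last step is not an argument, and carrying it out directly from the explicit formula for $\omega$ would be unpleasant. The paper's proof avoids this entirely: $\omega$ is constructed precisely so that its logarithmic derivative satisfies the identity
\[
\frac{x\,\omega'(x)}{\omega(x)}=\frac{2\alpha(x)-x\alpha'(x)-3n}{\alpha(x)+n}\qquad (x\ge x_0),
\]
and substituting this into your $T$ (after rewriting $\tfrac{(\alpha+n)^2-2(n^2+x)}{\alpha+n}=2\alpha-x\alpha'-n$ via (\ref{alid1})) yields $T(x)=2n\,\omega(x)$ \emph{identically}. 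So the ``delicate single-variable inequality'' you flag at the end is in fact an equality designed into $\omega$, and once you observe it the whole argument on $[x_0,\infty)$ collapses to one line: $\psi_{1,\varepsilon}=2n\varepsilon\omega+\varepsilon^2\omega(x\omega'-\omega)$, with the second term bounded, so $\psi_{1,\varepsilon}\ge \tfrac{1}{6}\bigl[\varepsilon(-x\omega')^{+}\bigr]^2$ for small $\varepsilon$.

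Your detour through $\theta(x)=2\omega+x\omega'>0$ to force $(\psi_{2,\varepsilon})^+=0$ on $[x_0,\infty)$ is unnecessary: the paper simply uses $(\psi_{2,\varepsilon})^+\le(\psi_2)^++\varepsilon(-x\omega')^+$, notes $(\psi_2)^+=0$ there, and that $(-x\omega')^+$ is bounded. Your compactness argument on $[0,x_0]$ is fine and is what the paper does implicitly.
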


\begin{proof}
  (i) We have $$2x  \mathring{\gamma}_{\varepsilon}'' ( x ) +
  \mathring{\gamma}_{\varepsilon}' ( x ) =2x  \mathring{\gamma}'' ( x ) +
  \mathring{\gamma}' ( x ) - \varepsilon ( 2x  \omega'' ( x ) + \omega' ( x )
  ).$$ We figure out that $\lim_{x \rightarrow \infty}   \omega' ( x ) = ( n-1
  )^{-2}$ and $\lim_{x \rightarrow \infty}  x \omega'' ( x ) =0$. Thus $2x
  \omega'' ( x ) + \omega' ( x )$ is bounded. From Lemma \ref{app} (i), we
  obtain the conclusion.

  (ii)  We have
  \begin{eqnarray}
    &  & x \mathring{\gamma}_{\varepsilon}' ( x ) \left(
    \mathring{\gamma}_{\varepsilon} ( x ) + \tfrac{x}{n} +n \right) -
    \mathring{\gamma}_{\varepsilon} ( x ) \left(
    \mathring{\gamma}_{\varepsilon} ( x ) + \tfrac{x}{n} -n \right) \nonumber\\
    & = & x \mathring{\gamma}' ( x ) \left( \mathring{\gamma} ( x ) +
    \tfrac{x}{n} +n \right) - \mathring{\gamma} ( x ) \left( \mathring{\gamma}
    ( x ) + \tfrac{x}{n} -n \right) \nonumber\\
    &  & + \varepsilon   \omega ( x ) \left[ 2 \mathring{\gamma} ( x ) -x
    \mathring{\gamma}' ( x ) + \tfrac{x}{n} -n- \left( \mathring{\gamma} ( x )
    + \tfrac{x}{n} +n \right) \frac{x  \omega' ( x )}{\omega ( x )} \right]
    \nonumber\\
    &  & + \varepsilon^{2} \omega ( x ) ( x  \omega' ( x ) - \omega ( x ) )\nonumber
  \end{eqnarray}
  and
  \[ \left( 2 \mathring{\gamma}_{\varepsilon} (x) - \tfrac{x}{n} + x
   \mathring{\gamma}_{\varepsilon}' (x) \right)^+ \leq \left( 2 \mathring{\gamma}
   (x) - \tfrac{x}{n} + x \mathring{\gamma}' (x)
    \right)^+ + \varepsilon (-x \omega'  (x))^+ . \]
  From \eqref{dlnw}, there exists a small positive number
  $\delta$, such that if $x>x_0-\delta$, then
  \begin{equation*}  2 \mathring{\gamma} ( x ) -x  \mathring{\gamma}' ( x ) + \tfrac{x}{n} -n-
    \left( \mathring{\gamma} ( x ) + \tfrac{x}{n} +n \right) \frac{x  \omega'
    ( x )}{\omega ( x )} >n  . \end{equation*}
  From Lemma \ref{app} (ii),  if $x>x_0-\delta$, we have \[ 2 \mathring{\gamma}
  (x) - \tfrac{x}{n} + x \mathring{\gamma}' (x) <0.\]
  Hence, if $x>x_0-\delta$, we have
  \begin{eqnarray*}
    &  & x \mathring{\gamma}_{\varepsilon}' ( x ) \left(
      \mathring{\gamma}_{\varepsilon} ( x ) + \tfrac{x}{n} +n \right) -
      \mathring{\gamma}_{\varepsilon} ( x ) \left(
      \mathring{\gamma}_{\varepsilon} ( x ) + \tfrac{x}{n} -n \right) -
      \tfrac{1}{6} \big[ \left( 2 \mathring{\gamma}_{\varepsilon} ( x ) -
      \tfrac{x}{n} +x  \mathring{\gamma}_{\varepsilon}' ( x ) \right)^{+}
      \big]^{2}\\
    & &\ \geq\, n \varepsilon \omega (x) + \varepsilon^2 \omega (x) (x
    \omega' (x) - \omega (x)) - \frac{\varepsilon^2}{6} [(- x \omega' (x))^+]^2
    .
  \end{eqnarray*}
  Notice that $x  \omega' ( x ) - \omega ( x )$ and $( -x
  \omega' ( x ) )^{+}$ are bounded. Thus, the right hand side of the above inequality
  is positive if $\varepsilon$ is small enough.

  For $0 \leq x \leq x_0-\delta$, since $\mathring{\gamma}_{\varepsilon} \rightarrow \mathring{\gamma},\,
  \mathring{\gamma}_{\varepsilon}'\rightarrow\mathring{\gamma}'$ uniformly as $\varepsilon\rightarrow 0$,
   the assertion follows from Lemma  \ref{app} (ii).
\end{proof}

For convenience, we denote $\gamma ( | H |^{2} )$, $\mathring{\gamma} ( | H
|^{2} )$, $\mathring{\gamma}_{\varepsilon} ( | H |^{2} )$ and $\omega ( | H
|^{2} )$ by $\gamma$, $\mathring{\gamma}$, $\mathring{\gamma}_{\varepsilon}$
and $\omega$, respectively. We use $( \cdot )'$ to denote the derivative with
respect to $| H |^{2}$.

Consider the mean curvature flow $F: M \times [ 0,T ) \rightarrow
\mathbb{S}^{n+q}$. Suppose $M_{0}$ is an $n$-dimensional $( n \geq 6 )$
closed submanifold satisfying $| h |^{2} < \gamma$. Since $M_{0}$ is compact,
there exists a small positive number $\varepsilon$, such that $M_{0}$
satisfies $|\mathring{h}|^{2} < \mathring{\gamma}_{\varepsilon}$.

Now we show that the pinching condition is preserved.

\begin{theorem}
  \label{pinch}If $M_{0}$ satisfies $|\mathring{h}|^{2} <
  \mathring{\gamma}_{\varepsilon}$, then this condition holds for all time $t
  \in [ 0,T )$.
\end{theorem}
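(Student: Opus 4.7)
The plan is to apply a parabolic maximum principle to the test function
\[
f := |\mathring{h}|^{2} - \mathring{\gamma}_{\varepsilon}(|H|^{2})
\]
on $M\times[0,T)$. By hypothesis and compactness of $M_{0}$, $f(\cdot,0)<0$. Suppose for contradiction that $t_{0}\in(0,T)$ is the first time at which $f$ attains $0$, at some point $p\in M$; then $\nabla f=0$, $\Delta f\le 0$ and $\partial_{t}f\ge 0$ at $(p,t_{0})$, so $(\partial_{t}-\Delta)f\ge 0$ there. The strategy is to compute $(\partial_{t}-\Delta)f$ at $(p,t_{0})$ and show it is strictly negative, yielding the contradiction (after, if needed, the standard perturbation by $\delta(1+t)$ to secure strictness).

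Combining Lemma \ref{evo} with $(\partial_{t}-\Delta)\mathring{\gamma}_{\varepsilon}(|H|^{2})=\mathring{\gamma}_{\varepsilon}'(\partial_{t}-\Delta)|H|^{2}-\mathring{\gamma}_{\varepsilon}''|\nabla|H|^{2}|^{2}$, and using $|\mathring{h}|^{2}=\mathring{\gamma}_{\varepsilon}$ at $(p,t_{0})$, one splits $(\partial_{t}-\Delta)f$ into a gradient line
\[
-2|\nabla h|^{2}+\tfrac{2}{n}|\nabla H|^{2}+2\mathring{\gamma}_{\varepsilon}'|\nabla H|^{2}+\mathring{\gamma}_{\varepsilon}''|\nabla|H|^{2}|^{2}
\]
and a reaction line $2R_{1}-\tfrac{2}{n}R_{2}-2\mathring{\gamma}_{\varepsilon}'R_{2}-2n\,\mathring{\gamma}_{\varepsilon}-2n\mathring{\gamma}_{\varepsilon}'|H|^{2}$. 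For the gradient line, I will invoke the Kato-type bound $|\nabla h|^{2}\ge\tfrac{3}{n+2}|\nabla H|^{2}$ and $|\nabla|H|^{2}|^{2}\le 4|H|^{2}|\nabla H|^{2}$ from Lemma \ref{dA2}, together with $\mathring{\gamma}_{\varepsilon}''\ge 0$ (valid for small $\varepsilon$ since $\gamma''>0$), to bound it above by
\[
2\Bigl(\mathring{\gamma}_{\varepsilon}'+2|H|^{2}\mathring{\gamma}_{\varepsilon}''-\tfrac{2(n-1)}{n(n+2)}\Bigr)|\nabla H|^{2},
\]
whose coefficient is strictly negative by Lemma \ref{gaep}(i).

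For the reaction line, I substitute the upper bound of Lemma \ref{R12}(i) for $R_{1}$ and the identity of Lemma \ref{R12}(ii) for $R_{2}$, always using $|\mathring{h}|^{2}=\mathring{\gamma}_{\varepsilon}$ at the contact point. Writing $x=|H|^{2}$ and $B:=2\mathring{\gamma}_{\varepsilon}-\tfrac{x}{n}+x\mathring{\gamma}_{\varepsilon}'$, the reaction line reorganizes to
\[
-2\bigl[x\mathring{\gamma}_{\varepsilon}'(\mathring{\gamma}_{\varepsilon}+\tfrac{x}{n}+n)-\mathring{\gamma}_{\varepsilon}(\mathring{\gamma}_{\varepsilon}+\tfrac{x}{n}-n)\bigr]+2BP_{2}-3P_{2}^{2}.
\]
Lemma \ref{gaep}(ii) bounds the bracket below by $\tfrac{1}{6}(B^{+})^{2}$, and completing the square in $P_{2}$ then yields a reaction line bounded by $-3(P_{2}-B^{+}/3)^{2}\le 0$. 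Adding the two lines produces $(\partial_{t}-\Delta)f<0$ at $(p,t_{0})$, the required contradiction.

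The main obstacle is the delicate algebraic reorganization of the reaction line into a manifestly non-positive quadratic in $P_{2}$, which works only because of the carefully tailored inequalities in Lemma \ref{gaep}. The replacement of $\alpha$ by the parabola $\beta$ on $[0,x_{0})$ in the definition of $\gamma$, and the further subtraction $-\varepsilon\omega$ in $\mathring{\gamma}_{\varepsilon}$, are precisely what convert the borderline equality cases present for the sharp function $\alpha$ alone (where Lemma \ref{gaep}(ii) would hold only with equality for $x\ge x_{0}$) into the strict inequalities Lemma \ref{gaep}(i) and \ref{gaep}(ii) needed to run the maximum principle cleanly.
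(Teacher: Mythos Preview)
Your proposal is correct and follows essentially the same route as the paper. The paper sets $U=|\mathring{h}|^{2}-\mathring{\gamma}_{\varepsilon}$, computes $(\partial_{t}-\Delta)U$ via Lemmas \ref{dA2}, \ref{R12}, \ref{evo}, kills the gradient term with Lemma \ref{gaep}(i), then substitutes $|\mathring{h}|^{2}=U+\mathring{\gamma}_{\varepsilon}$ to obtain $(\partial_{t}-\Delta)U\le \upsilon\,U+2U^{2}$ after invoking Lemma \ref{gaep}(ii); your first-contact-point argument is the equivalent pointwise version of this, and your explicit completion of the square $-\tfrac{1}{3}(B^{+})^{2}+2BP_{2}-3P_{2}^{2}=-3\bigl(P_{2}-\tfrac{B^{+}}{3}\bigr)^{2}\le 0$ is exactly what the paper means by ``the sum of the last two lines is non-positive.''
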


\begin{proof}
  Let $U= |\mathring{h}|^{2} - \mathring{\gamma}_{\varepsilon}$.
    By Lemmas \ref{dA2},  \ref{R12} and \ref{evo}, we have
  \begin{eqnarray*}
    \Big( \frac{\partial}{\partial t} - \Delta \Big) U & = & -2 | \nabla h |^{2} + \frac{2}{n} |
    \nabla H |^{2} +2 \mathring{\gamma}'_{\varepsilon}   | \nabla H |^{2} +
    \mathring{\gamma}''_{\varepsilon}   \big| \nabla | H |^{2} \big|^{2}\\
    &  & +2R_{1} - \frac{2}{n} R_{2} -2 n |\mathring{h}|^{2} -2
    \mathring{\gamma}'_{\varepsilon} \cdot ( R_{2} +n | H |^{2} ) \\
    & \leq & 2\Big[ - \frac{2 ( n-1 )}{n (
    n+2 )} + \mathring{\gamma}'_{\varepsilon} +2 | H |^{2}
    \mathring{\gamma}''_{\varepsilon} \Big] | \nabla H |^{2}\\
    &  & +2 |\mathring{h}|^{2} \Big( |\mathring{h}|^{2} + \frac{1}{n} | H |^{2} - n
    \Big) +2P_{2} \Big( 2 |\mathring{h}|^{2} - \frac{1}{n} | H |^{2} - \frac{3}{2}
    P_{2} \Big)\\
    &  & -2 \mathring{\gamma}'_{\varepsilon} \cdot | H |^{2} \Big(
    |\mathring{h}|^{2} + \frac{1}{n} | H |^{2} +n \Big) +2
    \mathring{\gamma}'_{\varepsilon} \cdot | H |^{2} P_{2} .
  \end{eqnarray*}
  From Lemma \ref{gaep} (i), the coefficient of $| \nabla H |^{2}$ is
  negative. Replacing $|\mathring{h}|^{2}$ by $U+ \mathring{\gamma}_{\varepsilon}$,
  the above formula becomes
  \begin{eqnarray*}
    \Big( \frac{\partial}{\partial t} - \Delta \Big) U & \leq & 2U \Big( 2
    \mathring{\gamma}_{\varepsilon} + \frac{1}{n} | H |^{2} -n- | H |^{2}
    \mathring{\gamma}'_{\varepsilon} +2P_{2} \Big) +2U^{2}\\
    &  & +2 \mathring{\gamma}_{\varepsilon} \cdot \Big(
    \mathring{\gamma}_{\varepsilon} + \frac{1}{n} | H |^{2} -n \Big) -2 | H
    |^{2}   \mathring{\gamma}'_{\varepsilon} \cdot \Big(
    \mathring{\gamma}_{\varepsilon} + \frac{1}{n} | H |^{2} +n \Big)\\
    &  & +2P_{2} \Big( 2 \mathring{\gamma}_{\varepsilon} - \frac{1}{n} | H
    |^{2} + | H |^{2} \mathring{\gamma}'_{\varepsilon} - \frac{3}{2} P_{2}
    \Big) .
  \end{eqnarray*}
  By Lemma \ref{gaep} (ii), we have
  \begin{eqnarray*}
    &  & P_2 \Big( 2 \mathring{\gamma}_{\varepsilon} - \frac{1}{n} | H |^2 + |
    H |^2 \mathring{\gamma}'_{\varepsilon} - \frac{3}{2} P_2 \Big)\\
    & \leq & \frac{1}{6} \Big[ \Big( 2 \mathring{\gamma}_{\varepsilon} -
    \frac{1}{n} | H |^2 + | H |^2 \mathring{\gamma}'_{\varepsilon} \Big)^+
    \Big]^2\\
    & \leq & | H |^2
    \mathring{\gamma}'_{\varepsilon} \cdot \Big(
    \mathring{\gamma}_{\varepsilon} + \frac{1}{n} | H |^2 + n \Big)
    -\mathring{\gamma}_{\varepsilon} \cdot \Big(
      \mathring{\gamma}_{\varepsilon} + \frac{1}{n} | H |^2 - n \Big) .
  \end{eqnarray*}
   Then the assertion follows from the maximum  principle.
\end{proof}

\section{An estimate for traceless second fundamental form}

In this section, we derive the following estimate for the traceless second
fundamental form.

\begin{theorem}
  \label{sa0h2}If $M_{0}$ satisfies $|\mathring{h}|^{2} <
  \mathring{\gamma}_{\varepsilon}$, then there exist constants $0< \sigma
  \leq \varepsilon^{5/2}$, and $C_{0} >0$ depending only on $M_{0}$, such
  that for all $t \in [ 0,T )$ we have
  \begin{equation*}  |\mathring{h}|^{2} \leq C_{0}   ( | H |^{2} +1 )^{1- \sigma} \mathrm{e}^{-2
     \sigma t} . \end{equation*}
\end{theorem}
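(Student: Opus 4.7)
The plan is to apply the parabolic maximum principle to the scale-invariant test quantity
\[
f_\sigma \;=\; \frac{|\mathring h|^2\, e^{2\sigma t}}{(|H|^2+1)^{1-\sigma}},
\]
following the Andrews--Baker template adapted to the sphere. If $\sup_M f_\sigma(\cdot,t)\le \sup_M f_\sigma(\cdot,0)=:C_0$ holds for some $0<\sigma\le \varepsilon^{5/2}$, the conclusion of the theorem follows immediately.

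First I would compute $(\partial_t-\Delta)f_\sigma$ from Lemma \ref{evo} via the logarithmic product rule. Writing $g=|\mathring h|^2$ and $h_1=|H|^2+1$, one obtains
\[
\frac{(\partial_t-\Delta)f_\sigma}{f_\sigma}=\frac{(\partial_t-\Delta)g}{g}-(1-\sigma)\frac{(\partial_t-\Delta)h_1}{h_1}+2\sigma-\sigma(1-\sigma)\frac{|\nabla h_1|^2}{h_1^2}+\frac{2(1-\sigma)}{h_1}\bigl\langle\nabla\log f_\sigma,\nabla h_1\bigr\rangle,
\]
whose last term vanishes at a spatial maximum of $f_\sigma$. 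For the quadratic-in-gradient part of what remains, the combination $-2|\nabla h|^2/g+(2/n)|\nabla H|^2/g-2(1-\sigma)|\nabla H|^2/h_1$ is controlled by Lemma \ref{dA2}(i), and together with $-\sigma(1-\sigma)|\nabla h_1|^2/h_1^2$ it is rendered nonpositive once $\sigma<2(n-1)/(n(n+2))$; the auxiliary identity in Lemma \ref{dA2}(ii) dominates the residual cross-terms.

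For the reactive zero-order part, I would use Lemma \ref{R12}(i),(ii) to bound $2R_1-(2/n)R_2$ and $R_2$ in terms of $g$, $|H|^2$ and $P_2$, and then invoke the preserved pinching $g<\mathring\gamma_\varepsilon$ from Theorem \ref{pinch}. After substitution and rearrangement, the reactive part acquires a form amenable to Lemma \ref{gaep}(ii): the expressions $2\mathring\gamma_\varepsilon-|H|^2/n+|H|^2\mathring\gamma_\varepsilon'$ and $|H|^2\mathring\gamma_\varepsilon'(\mathring\gamma_\varepsilon+|H|^2/n+n)-\mathring\gamma_\varepsilon(\mathring\gamma_\varepsilon+|H|^2/n-n)$ appear, and the lemma yields a strict negative reserve of order $\varepsilon\,\omega(|H|^2)$, inherited from the $-\varepsilon\omega$ perturbation built into $\mathring\gamma_\varepsilon$. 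The extra $2\sigma+O(\sigma)$ from the time factor and the $\sigma$-perturbations in the Kato-type gradient bounds are absorbed by this reserve provided $\sigma\le \varepsilon^{5/2}$, and the maximum principle then delivers $f_\sigma\le C_0$.

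The main obstacle is quantifying this reserve uniformly down to $|H|=0$, in the regime $|H|^2<x_0$ where $\gamma=\beta$ is merely the quadratic Taylor approximation to $\alpha$ at $x_0$ and Lemma \ref{gaep}(ii) is a genuine strict inequality carrying reserve of order $(|H|^2-x_0)^2$ only. In this low-curvature regime the specific $\omega$-perturbation and the square-root-versus-quadratic balance $\sqrt{\psi_1(x)}\ge \psi_2(x)/\sqrt{6}$ from Lemma \ref{gaep}(ii) are essential, and the exponent $5/2$ in $\sigma\le \varepsilon^{5/2}$ reflects precisely how small $\sigma$ must be taken relative to $\varepsilon$ so that all $\sigma$-error terms (in both the gradient Kato correction and the time factor) are controlled by the $\varepsilon$-reserve coming from $\omega$.
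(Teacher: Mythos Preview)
Your proposal has a genuine gap: the direct maximum principle cannot close here. When you expand the reactive zero-order part of $(\partial_t-\Delta)f_\sigma/f_\sigma$, the $\sigma$-perturbation of the denominator contributes a term of size $\sigma\cdot R_2/h_1\sim \sigma|H|^2/n$ (for large $|H|$), which is unbounded along the flow if $T<\infty$. Against this, the $\varepsilon$-reserve you extract from the pinching $|\mathring h|^2<\mathring\gamma-\varepsilon\omega$ is \emph{not} of order $\varepsilon\,\omega(|H|^2)$ after division by the denominator: since both $\omega/\mathring\gamma$ and $\omega/(|H|^2+1)$ are bounded functions of $|H|^2$, the reserve that actually survives in the reaction is only a constant $O(\varepsilon)$. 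Thus the reaction has the form $c_1\sigma|H|^2-c_2\varepsilon+O(1)$, which no choice of $\sigma\le\varepsilon^{5/2}$ can render nonpositive once $|H|^2$ is large. (A secondary issue: with denominator $(|H|^2+1)^{1-\sigma}$ the analogue of Lemma~\ref{app}(i) would read $1<\tfrac{2(n-1)}{n(n+2)}$, which is false, so your gradient control also breaks down near $|H|=0$; this is why the paper uses $\mathring\gamma^{1-\sigma}$ instead.)

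The paper's proof follows the Huisken--Andrews--Baker integral scheme rather than a pointwise maximum principle. One sets $f_\sigma=|\mathring h|^2/\mathring\gamma^{1-\sigma}$ and first derives (Lemma~\ref{ptf}) a parabolic inequality with reaction $2f_\sigma(3\sigma|h|^2-\sigma n-\varepsilon/C_1)$, confirming that the reserve is only the constant $\varepsilon/C_1$. The bad $\sigma|h|^2$ term is then handled \emph{integrally}: a Simons-type estimate for $\Delta|\mathring h|^2$ (Lemma~\ref{lapa0}) together with integration by parts converts $\int|h|^2 f_\sigma^p$ into gradient terms, which combine with the good $-\delta_n|\nabla H|^2/|\mathring h|^2$ from the evolution inequality. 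Choosing $p\ge 1/\varepsilon$ and $\sigma\le\varepsilon^2/\sqrt{p}$ makes the resulting quadratic form in $(|\nabla f_\sigma|,|\nabla H|)$ negative, giving exponential decay of $\|f_\sigma\|_{L^p}$ (Lemma~\ref{pnorm}); a Stampacchia iteration with the Michael--Simon Sobolev inequality then upgrades this to the $L^\infty$ bound. The exponent $5/2$ arises from the compatibility $\sigma\le\varepsilon^2/\sqrt{p}$ with $p\sim 1/\varepsilon$, not from a pointwise balance of reserves.
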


To prove this theorem, we will show that the following function decays exponentially.
\begin{equation*}  f_{\sigma} = \frac{|\mathring{h}|^{2}}{\mathring{\gamma}^{1- \sigma}} ,
\end{equation*}   where $0< \sigma <\frac{1}{2}$.

For the time derivative of $f_{\sigma}$, we obtain

\begin{lemma}
  \label{ptf}There exist positive constants $C_{1},\tilde{C}_1$ depending only on $n$,
  such that
  \begin{equation*}  \frac{\partial}{\partial t} f_{\sigma} \leq \Delta f_{\sigma} + \frac{4C_{1}}{|\mathring{h}|} |
     \nabla f_{\sigma} | | \nabla H | - \frac{2
     f_{\sigma}}{5n |\mathring{h}|^{2}} | \nabla H |^{2} +2f_{\sigma} \Big( 3 \sigma |
     h |^{2} - \sigma n- \varepsilon \tilde{C}_1 \Big) . \end{equation*}
\end{lemma}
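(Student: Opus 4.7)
The plan is to compute $(\partial_t - \Delta)f_\sigma$ directly from the definition $f_\sigma = |\mathring h|^2 \mathring\gamma^{\sigma-1}$. Writing $p = 1-\sigma$ and recalling that $\mathring\gamma$ is a function of $|H|^2$, the chain rule together with Lemma \ref{evo}(iii),(iv) yields
\begin{align*}
(\partial_t-\Delta)f_\sigma &= \mathring\gamma^{-p}\bigl[-2|\nabla h|^2+\tfrac{2}{n}|\nabla H|^2+2R_1-\tfrac{2}{n}R_2-2n|\mathring h|^2\bigr]\\
&\quad - p|\mathring h|^2\mathring\gamma^{-p-1}\mathring\gamma'\bigl[-2|\nabla H|^2+2R_2+2n|H|^2\bigr]\\
&\quad + 2p\mathring\gamma^{-p-1}\mathring\gamma'\,\nabla|\mathring h|^2\!\cdot\!\nabla|H|^2 + p|\mathring h|^2\mathring\gamma^{-p-1}\mathring\gamma''\bigl|\nabla|H|^2\bigr|^2\\
&\quad - p(p+1)|\mathring h|^2\mathring\gamma^{-p-2}(\mathring\gamma')^2\bigl|\nabla|H|^2\bigr|^2.
\end{align*}
The task is to rearrange this into the three pieces on the right-hand side of the lemma: the negative $|\nabla H|^2$ coefficient, the mixed gradient term, and the reaction term.

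For the gradient part, I bound $|\nabla|H|^2|^2\le 4|H|^2|\nabla H|^2$ by Lemma \ref{dA2}(ii), discard the strictly negative $-p(p+1)(\mathring\gamma')^2$ Hessian summand, and invoke the refined Kato inequality $|\nabla h|^2 \ge \tfrac{3}{n+2}|\nabla H|^2$ of Lemma \ref{dA2}(i). The aggregate $|\nabla H|^2$ coefficient is then at most $\mathring\gamma^{-p}\bigl\{-\tfrac{4(n-1)}{n(n+2)}+2p\mathring\gamma'+4p|H|^2\mathring\gamma''\bigr\}$, and applying Lemma \ref{gaep}(i) (with the $\varepsilon$-correction from $\mathring\gamma_\varepsilon$ versus $\mathring\gamma$ absorbed by the choice $\sigma\le\varepsilon^{5/2}$) produces the required $-2\delta_n\mathring\gamma^{-p}|\nabla H|^2=-\tfrac{2\delta_n f_\sigma}{|\mathring h|^2}|\nabla H|^2$. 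The cross-gradient term is rewritten via the identity $\mathring\gamma^{-p}\nabla|\mathring h|^2 = \nabla f_\sigma + p|\mathring h|^2\mathring\gamma^{-p-1}\mathring\gamma'\nabla|H|^2$; Cauchy--Schwarz together with the uniform boundedness of $|H||\mathring\gamma'|/\mathring\gamma$ (from Lemma \ref{gminab}(iii) and the explicit form of $\alpha,\beta$) yields the stated $\tfrac{4C_1}{|\mathring h|}|\nabla f_\sigma||\nabla H|$, up to one more $|\nabla H|^2$ residual that is swept into the $\delta_n$ slack.

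The main obstacle lies in the reaction part. Using Lemma \ref{R12}(i),(ii), all $P_2$-dependent contributions combine into an aggregate $2P_2\bigl(2|\mathring h|^2-\tfrac{1}{n}|H|^2-\tfrac{3}{2}P_2+p|H|^2\mathring\gamma'\bigr)$, which becomes nonpositive after using $P_2\le|\mathring h|^2\le\mathring\gamma_\varepsilon$, and is discarded. The remaining terms reorganize into
\[
2\mathring\gamma^{-p}|\mathring h|^2\Bigl\{\,|\mathring h|^2+\tfrac{1}{n}|H|^2-n - p|H|^2\mathring\gamma^{-1}\mathring\gamma'\bigl(\mathring\gamma+\tfrac{1}{n}|H|^2+n\bigr)\,\Bigr\}.
\]
Expanding in $\sigma$, the $\sigma$-linear part contributes precisely $2f_\sigma(3\sigma|h|^2-\sigma n)$ after invoking the identity $|h|^2=|\mathring h|^2+|H|^2/n$. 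The delicate point is the $\sigma=0$ part: replacing $|\mathring h|^2$ by $\mathring\gamma-\varepsilon\omega$ via the preserved pinching $|\mathring h|^2\le\mathring\gamma_\varepsilon$ from Theorem \ref{pinch} converts it into $-2\mathring\gamma^{-p}|\mathring h|^2\bigl[\psi_1(|H|^2)-\mathring\gamma\,\psi_2(|H|^2)\bigr]$ plus an $\varepsilon\omega$-surplus, where $\psi_1,\psi_2$ are the functions in the proof of Lemma \ref{gaep}(ii). On $\{|H|^2\ge x_0\}$ the identities $\psi_1=0$ and $\psi_2\le 0$ give exact cancellation so only the $\varepsilon\omega$-surplus remains; on $\{0\le|H|^2<x_0\}$ the quadratic estimate $\sqrt{\psi_1}>\psi_2^+/\sqrt 6$ of \ref{gaep}(ii), combined with Young's inequality, produces the same strictly negative surplus. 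Since $\omega/(1+|H|^2)$ is bounded above and below by constants depending only on $n$, this surplus dominates $2\varepsilon f_\sigma/C_1$, yielding the final $-2\varepsilon f_\sigma/C_1$ term in the lemma.
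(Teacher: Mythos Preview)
Your overall computation of $(\partial_t-\Delta)f_\sigma$ is correct and mirrors the paper's formula (\ref{dtf}), but the handling of the $P_2$-terms contains a genuine error that breaks the argument.

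You assert that the $P_2$-aggregate $2P_2\bigl(2|\mathring h|^2-\tfrac{1}{n}|H|^2-\tfrac{3}{2}P_2+p|H|^2\mathring\gamma'\bigr)$ ``becomes nonpositive after using $P_2\le|\mathring h|^2\le\mathring\gamma_\varepsilon$, and is discarded.'' This is false. The expression is a downward quadratic in $P_2$ of the form $P_2(A-\tfrac{3}{2}P_2)$; when $A>0$ it is \emph{positive} for all $0<P_2<\tfrac{2}{3}A$, and the constraint $P_2\le|\mathring h|^2$ does not force $P_2$ out of this range. Concretely, at $|H|^2=0$ one has $A=2|\mathring h|^2>0$, so any small $P_2>0$ makes the aggregate strictly positive. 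More generally, on the whole interval $0\le|H|^2<x_0$ the quantity $\psi_2=2\mathring\gamma-\tfrac{1}{n}|H|^2+|H|^2\mathring\gamma'$ is positive (this is shown in the proof of Lemma~\ref{app}(ii)), so $A$ can be positive there and the $P_2$-contribution cannot be thrown away.

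The paper does \emph{not} discard this term: it completes the square in $P_2$, obtaining the upper bound $\tfrac{1}{6\mathring\gamma}\bigl[(2\mathring\gamma-\tfrac{1}{n}|H|^2+|H|^2\mathring\gamma')^+\bigr]^2$, and then uses Lemma~\ref{app}(ii) --- precisely the inequality $\psi_1>\tfrac{1}{6}(\psi_2^+)^2$ --- to absorb this into the main reaction term $(\gamma+n)(\mathring\gamma-|H|^2\mathring\gamma')/\mathring\gamma=2n-\psi_1/\mathring\gamma$. The constant $\tfrac{1}{6}$ in Lemma~\ref{app}(ii) is tailored to this purpose. Your later invocation of $\psi_2$ in the ``$\sigma=0$ part'' is in fact inconsistent with having already discarded the $P_2$-terms, since $\psi_2$ enters only through that quadratic; and the claimed form $-2\mathring\gamma^{-p}|\mathring h|^2[\psi_1-\mathring\gamma\psi_2]$ does not arise from the computation you describe.

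Two smaller points: (a) for the $|\nabla H|^2$ coefficient you should quote Lemma~\ref{app}(i) (which concerns $\mathring\gamma$, the function actually appearing in $f_\sigma$), not Lemma~\ref{gaep}(i); no $\varepsilon$-absorption is needed there. (b) In the reorganized reaction you wrote $\mathring\gamma+\tfrac{1}{n}|H|^2+n$ inside the last parenthesis, but the computation gives $|\mathring h|^2+\tfrac{1}{n}|H|^2+n$; the replacement $|\mathring h|^2\to\mathring\gamma$ there requires knowing the sign of $\mathring\gamma'$ and is part of what Lemma~\ref{app}(iii) handles in the paper.
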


\begin{proof}
  By a straightforward computation, we get
  \begin{equation*}  \frac{\partial}{\partial t} f_{\sigma} =f_{\sigma} \left( \frac{\frac{\partial}{\partial t} |\mathring{h}|^{2}}{|\mathring{h}|^{2}} - (
     1- \sigma ) \frac{\frac{\partial}{\partial t} \mathring{\gamma}}{\mathring{\gamma}} \right) , \end{equation*}
  and
  \begin{equation}
    \Delta f_{\sigma} =f_{\sigma} \left( \frac{\Delta
    |\mathring{h}|^{2}}{|\mathring{h}|^{2}} - ( 1- \sigma ) \frac{\Delta
    \mathring{\gamma}}{\mathring{\gamma}} \right) -2 ( 1- \sigma )
    \frac{\left\langle \nabla f_{\sigma} , \nabla \mathring{\gamma}
    \right\rangle}{\mathring{\gamma}} + \sigma ( 1- \sigma ) f_{\sigma}
    \frac{\left| \nabla \mathring{\gamma} \right|^{2}}{\left|
    \mathring{\gamma} \right|^{2}} . \label{lapf}
  \end{equation}
  From the evolution equations, we have
  \begin{eqnarray}
    &\left( \frac{\partial}{\partial t} - \Delta \right) f_{\sigma} & \leq \ 2
    \frac{| \nabla f_{\sigma} || \nabla \mathring{\gamma}
    |}{\mathring{\gamma}}  \nonumber\\
    &  & +f_{\sigma} \left[ \frac{2}{|\mathring{h}|^{2}} \left( \frac{| \nabla H
    |^{2}}{n} - | \nabla h |^{2} \right)
     + \frac{1- \sigma}{\mathring{\gamma}} \left( 2
    \mathring{\gamma}' | \nabla H |^{2} + \mathring{\gamma}''   \big| \nabla | H
    |^{2} \big|^{2} \right) \right]  \label{dtf}\\
    &  & +2 f_{\sigma} \left[ \frac{1}{|\mathring{h}|^{2}} \left( R_{1} -
    \frac{1}{n}  R_{2} \right) -n- ( 1- \sigma )
    \frac{\mathring{\gamma}'}{\mathring{\gamma}} (  R_{2} +n | H |^{2} )
    \right] . \nonumber
  \end{eqnarray}

  By the definition of $\mathring{\gamma}$, there exists a
  positive constant $C_{1}$, such that $\frac{| H | |
  \mathring{\gamma}' | }{ \sqrt{\mathring{\gamma}}} <C_{1}$. Then
  \begin{equation}
    \frac{\left| \nabla \mathring{\gamma} \right|}{\mathring{\gamma}}
    \leq \frac{2}{\mathring{\gamma}} \left| \mathring{\gamma}' \right| |
    H | | \nabla H | \leq \frac{2C_{1}}{\sqrt{\mathring{\gamma}}} | \nabla H |
    \leq \frac{2C_{1}}{|\mathring{h}|} | \nabla H | .
    \label{aopaodfsdh2}
  \end{equation}

  By Lemma \ref{dA2} and Lemma \ref{app} (i),
    we have the following estimate for the expression in the first square bracket of the right
  hand side of (\ref{dtf}).
  \begin{eqnarray*}
    &  & \frac{2}{|\mathring{h}|^{2}} \left( \frac{| \nabla H |^{2}}{n} - | \nabla h
    |^{2} \right) + \frac{1- \sigma}{\mathring{\gamma}} \left( 2
    \mathring{\gamma}' | \nabla H |^{2} + \mathring{\gamma}'' \big| \nabla | H
    |^{2} \big|^{2} \right)\\
    & \leq &\frac{2}{|\mathring{h}|^2} \cdot \frac{2 (1 - n)}{n (n + 2)}  | \nabla H |^2 +
    \frac{1 - \sigma}{\mathring{\gamma}} \left( 2 \mathring{\gamma}' + 4
    \mathring{\gamma}''  | H |^2 \right) | \nabla H |^2\\
    & \leq & \frac{2}{| \mathring{h} |^{2}} \cdot \frac{2 ( 1-n )}{n ( n+2 )}   | \nabla H |^{2} +
    \frac{1}{| \mathring{h} |^{2}}  \left( \frac{4 ( n-1 )}{n ( n+2 )} -
    \frac{2}{5n} \right) | \nabla H |^{2}\\
    & \leq & - \frac{2 }{5n |\mathring{h}|^{2}} | \nabla H |^{2} .
  \end{eqnarray*}

  We then estimate the expression in the second square bracket of the right
  hand side of (\ref{dtf}). By Lemma \ref{R12}, we have
  \begin{eqnarray}
    &  & \frac{1}{| \mathring{h}|^2} \left( R_1 - \frac{1}{n} R_2
    \right) - n - (1 - \sigma) \frac{\mathring{\gamma}'}{\mathring{\gamma}} (R_2
    + n | H |^2) \nonumber\\
    \qquad & \le & | h |^2 - n + 2 P_2 - \frac{P_2}{| \mathring{h}
        |^2} \left( \frac{3 P_2}{2} + \frac{| H |^2}{n} \right) - (1 - \sigma)
    \frac{\mathring{\gamma}'}{\mathring{\gamma}} | H |^2 ( | h |^2 + n - P_2)
    \label{sH22c}\\
    & \le & | h |^2 - n + 2 P_2 - (1 - \sigma) \left[
    \frac{P_2}{\mathring{\gamma}} \left( \frac{3 P_2}{2} + \frac{| H |^2}{n}
    \right) + \frac{\mathring{\gamma}'}{\mathring{\gamma}} | H |^2 ( | h |^2 + n
    - P_2) \right] \nonumber\\
    & = & \sigma (| h |^2 + n + 2 P_2) - 2 n + (1 - \sigma) \times
    \nonumber\\
    &  & \left[ | h |^2 + n + 2 P_2 - \frac{P_2}{\mathring{\gamma}} \left(
    \frac{3 P_2}{2} + \frac{| H |^2}{n} \right) -
    \frac{\mathring{\gamma}'}{\mathring{\gamma}} | H |^2 ( | h |^2 + n - P_2)
    \right] . \nonumber
  \end{eqnarray}
  By Lemma \ref{app} (ii), (iii), we get
  \begin{eqnarray*}
    &  & | h |^2 + n + 2 P_2 - \frac{P_2}{\mathring{\gamma}} \left( \frac{3
        P_2}{2} + \frac{| H |^2}{n} \right) -
    \frac{\mathring{\gamma}'}{\mathring{\gamma}} | H |^2 ( | h |^2 + n - P_2)\\
    & = & (| h |^2 + n) \left( 1 - \frac{
        \mathring{\gamma}'}{\mathring{\gamma}} | H |^2 \right) +
    \frac{P_2}{\mathring{\gamma}} \left( 2 \mathring{\gamma} - \frac{| H |^2}{n}
    + \mathring{\gamma}'  | H |^2 - \frac{3 P_2}{2} \right)\\
    & \le & (\gamma - \varepsilon \omega + n) \frac{\mathring{\gamma} -
        \mathring{\gamma}'  | H |^2}{\mathring{\gamma}} + \frac{1}{6
        \mathring{\gamma}} \left[ \left( 2 \mathring{\gamma} - \tfrac{1}{n} | H |^2 +
    \mathring{\gamma}' | H |^2 \right)^+ \right]^2\\
    & \leq & - \frac{2 \varepsilon \omega}{\mathring{\gamma}} +
    \frac{1}{\mathring{\gamma}} \left[ (\gamma + n) \left( \mathring{\gamma} -
    \mathring{\gamma}'  | H |^2 \right) + \frac{1}{6} \left[ \left( 2
    \mathring{\gamma} - \tfrac{1}{n} | H |^2 + \mathring{\gamma}' | H |^2
    \right)^+ \right]^2 \right]\\
    & \leq & - \frac{2 \varepsilon \omega}{\mathring{\gamma}} + 2 n.
  \end{eqnarray*}
  Hence, the right hand side of (\ref{sH22c}) is less than
  \begin{equation*}
    \sigma ( | h |^{2} +n +2  P_{2})  -2n
    + ( 1- \sigma ) \Big( 2n- \frac{2 \varepsilon \omega}{\mathring{\gamma}}
     \Big) < 3\sigma   | h |^{2} - \sigma n- \frac{ \varepsilon \omega}{\mathring{\gamma}} .
  \end{equation*}
  Let $\tilde{C}_1$ be a positive constant such that $\tilde{C}_1 <
  \frac{ \omega}{\mathring{\gamma}}$.
  Then we complete the proof of Lemma \ref{ptf}.
\end{proof}

We need the following estimate for the the Laplacian of $|\mathring{h}|^{2}$.

\begin{lemma}
  \label{lapa0}Suppose that $M$ is an n-dimensional $( n \geq 6 )$
  submanifold in $\mathbb{S}^{n+q}$ satisfying $|\mathring{h}|^{2} <
  \mathring{\gamma}_{\varepsilon}$.  There exists a positive constant
  $C_{2}$ depending only on $n$, such that for sufficiently small $\sigma >0$,
  there holds
  \[  \Delta |\mathring{h}|^{2} \geq 2 \big\langle \mathring{h} , \nabla^{2} H
     \big\rangle +2 \varepsilon C_{2}   |\mathring{h}|^{2} \Big( | h |^{2} -
     \frac{\varepsilon\tilde{C}_1}{3 \sigma} \Big) . \]
\end{lemma}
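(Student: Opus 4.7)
The plan is to reduce the estimate to a pointwise algebraic inequality for $n|\mathring{h}|^2 + R_3 - R_1$. Starting from the Simons-type identity (\ref{lapho2}) with $c=1$,
\[ \Delta|\mathring{h}|^2 = 2\langle \mathring{h}, \nabla^2 H\rangle + 2|\nabla h|^2 - \tfrac{2}{n}|\nabla H|^2 + 2n|\mathring{h}|^2 + 2(R_3 - R_1). \]
Since Lemma \ref{dA2}(i) together with $\tfrac{3}{n+2} \geq \tfrac{1}{n}$ (for $n \geq 1$) makes the gradient contribution non-negative, it suffices to prove the pointwise bound
\[ n|\mathring{h}|^2 + R_3 - R_1 \geq \varepsilon C_2 |\mathring{h}|^2\bigl(|h|^2 - \tfrac{\varepsilon}{3C_1\sigma}\bigr). \]

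Next I would apply Lemma \ref{R12}(iii) and regroup by powers of $P_2$. Setting $c_n := \tfrac{n-2}{2\sqrt{n(n-1)}}$, $A := n + |H|^2/n - |\mathring{h}|^2 - 2c_n|H||\mathring{h}|$ and $B := c_n|H||\mathring{h}|$, the lower bound becomes $n|\mathring{h}|^2 + R_3 - R_1 \geq |\mathring{h}|^2 A + (B - 2|\mathring{h}|^2)P_2 + \tfrac{3}{2}P_2^2$. Minimizing the quadratic in $P_2 \geq 0$ and using the factorization $(2|\mathring{h}|^2 - B)^+ = |\mathring{h}|(2|\mathring{h}| - c_n|H|)^+$ yields
\[ n|\mathring{h}|^2 + R_3 - R_1 \geq |\mathring{h}|^2\bigl[A - \tfrac{1}{6}((2|\mathring{h}| - c_n|H|)^+)^2\bigr]. \]
The critical input is then Lemma \ref{app}(iv) rewritten as $n + x/n - \mathring{\gamma}(x) \geq 2c_n\sqrt{x\mathring{\gamma}(x)}$ with $x = |H|^2$; combined with $|\mathring{h}|^2 \leq \mathring{\gamma} - \varepsilon\omega$ and the elementary estimate $\sqrt{\mathring{\gamma}} - |\mathring{h}| \geq \varepsilon\omega/(2\sqrt{\mathring{\gamma}})$, this produces the surplus
\[ A \geq \varepsilon\omega\Bigl(1 + \tfrac{c_n|H|}{\sqrt{\mathring{\gamma}}}\Bigr). \]
Lemma \ref{app}(v) then bounds $2|\mathring{h}| - c_n|H| \leq 2\sqrt{\mathring{\gamma}} - c_n|H|$ above by a dimensional constant, so the square correction from the $P_2$-minimization is a bounded $O(1)$ quantity.

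Assembling the pieces and writing $|h|^2 = |\mathring{h}|^2 + |H|^2/n$: the asymptotics $\omega \sim |H|^2/(n-1)^2$ and $c_n|H|/\sqrt{\mathring{\gamma}} \to (n-2)/2$ as $|H| \to \infty$, together with $\omega$ bounded below by a positive constant on compacta, show that the surplus dominates $\varepsilon C_2|\mathring{h}|^2|h|^2$ for some dimension-dependent $C_2 > 0$, modulo a bounded $|\mathring{h}|^2$ error. Because $\sigma \leq \varepsilon^{5/2}$, the subtracted term $\tfrac{\varepsilon^2 C_2}{3C_1\sigma}|\mathring{h}|^2$ is at least of order $\varepsilon^{-1/2}|\mathring{h}|^2$, so this error is absorbed with room to spare once $\varepsilon$ is small. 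The main obstacle is the case analysis across different regimes (small vs.\ large $|H|$; sign of $2|\mathring{h}| - c_n|H|$) and the verification that the specific profile of $\omega$ chosen by the authors is exactly tuned so that the constant $C_2$ emerges depending only on the dimension $n$.
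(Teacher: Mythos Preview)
Your proposal is correct and follows essentially the same route as the paper: reduce to the pointwise inequality for $n|\mathring{h}|^2 + R_3 - R_1$ via (\ref{lapho2}) and Lemma~\ref{dA2}(i), apply Lemma~\ref{R12}(iii), minimize the quadratic in $P_2$, use Lemma~\ref{app}(iv) to extract the surplus $\varepsilon\omega$, bound the $P_2$-correction by Lemma~\ref{app}(v), and absorb the bounded error using $\sigma \leq \varepsilon^{5/2}$. The paper closes the last step more cleanly than your asymptotic discussion by simply setting $C_2 = \inf_{x\geq 0}\omega(x)/\gamma(x)$, which is a positive dimensional constant giving $\varepsilon\omega \geq \varepsilon C_2\gamma > \varepsilon C_2|h|^2$ in one stroke; your extra factor $(1 + c_n|H|/\sqrt{\mathring{\gamma}})$ is harmless but unnecessary.
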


\begin{proof}
  By (\ref{lapho2}) and Lemma \ref{dA2} (i), we have
  $  \frac{1}{2} \Delta |\mathring{h}|^{2} \geq  \big\langle \mathring{h} , \nabla^{2} H
    \big\rangle +n |\mathring{h}|^{2} -R_{1} +R_{3} $.

   It follows from \eqref{R31} and Lemma \ref{app} (iv)
  that
  \begin{eqnarray*}
    n |\mathring{h}|^{2} -R_{1} +R_{3} & \geq & |\mathring{h}|^{2} \left( n -
    \tfrac{n-2}{\sqrt{n ( n-1 )}} | H | |\mathring{h}| - |\mathring{h}|^{2} + \frac{1}{n} |
    H |^{2} \right)\\
    &  & +P_{2} \left( \tfrac{n-2}{2 \sqrt{n ( n-1 )}} | H | |\mathring{h}| -2
    |\mathring{h}|^{2} + \frac{3}{2}  P_{2} \right)\\
    & \geq & |\mathring{h}|^{2} \left( n -  \tfrac{n-2}{\sqrt{n ( n-1 )}} | H |
    \sqrt{\mathring{\gamma}} - \left( \mathring{\gamma} - \varepsilon \omega
    \right) + \frac{1}{n} | H |^{2} \right)\\
    &  & -P_{2} \left( |\mathring{h}| \Big( 2 \sqrt{\mathring{\gamma}} -
    \tfrac{n-2}{2  \sqrt{n ( n-1 )}} | H | \Big) - \frac{3}{2}  P_{2}
    \right)\\
    & \geq & |\mathring{h}|^{2}   \varepsilon   \omega - \frac{1}{6}
    |\mathring{h}|^{2} \left[ \Big( 2 \sqrt{\mathring{\gamma}} - \tfrac{n-2}{2
    \sqrt{n ( n-1 )}} | H | \Big)^{+} \right]^{2} .
  \end{eqnarray*}
  Let $C_{2}$ be a positive constant such that $C_2 < \frac{\omega}{\gamma}$. Thus, $ \omega
  >  C_{2} | h |^{2}$. From Lemma \ref{app} (v), if $\sigma$ is small enough, then
  $\frac{\varepsilon^{2} }{ \sigma} > \frac{1}{2\tilde{C}_{1}C_{2}} \Big[  \Big( 2
  \sqrt{\mathring{\gamma}} - \tfrac{n-2}{2  \sqrt{n ( n-1 )}} | H | \Big)^+
  \Big]^{2}$. Therefore, we obtain the conclusion.
\end{proof}

From (\ref{lapf}), (\ref{aopaodfsdh2}) and Lemma \ref{lapa0}, we have
\begin{eqnarray*}
  \Delta f_{\sigma} & \geq & \frac{\Delta
  |\mathring{h}|^{2}}{\mathring{\gamma}^{1- \sigma}} -  ( 1- \sigma )
  \frac{f_{\sigma}}{\mathring{\gamma}} \Delta \mathring{\gamma} -2 ( 1- \sigma
  ) \frac{\left\langle \nabla f_{\sigma} , \nabla \mathring{\gamma}
  \right\rangle}{\mathring{\gamma}}\\
  & \geq & \frac{2 \langle
  \mathring{h} , \nabla^{2} H \rangle }{\mathring{\gamma}^{1- \sigma}}+2 \varepsilon C_{2}   \Big( | h
  |^{2} - \frac{\varepsilon\tilde{C}_1}{3 \sigma} \Big) f_{\sigma}
   -  ( 1- \sigma ) \frac{f_{\sigma}}{\mathring{\gamma}} \Delta
  \mathring{\gamma} - \frac{4C_{1}}{|\mathring{h}|} | \nabla f_{\sigma} | | \nabla H
  | .
\end{eqnarray*}
This is equivalent to
\begin{equation}
  2 \varepsilon C_{2} \Big( | h |^{2} - \frac{\varepsilon\tilde{C}_1}{3 \sigma}
  \Big) f_{\sigma} \leq \Delta f_{\sigma} +  ( 1- \sigma )
  \frac{f_{\sigma}}{\mathring{\gamma}} \Delta \mathring{\gamma} -
  \frac{2  \langle \mathring{h} ,
  \nabla^{2} H \rangle }{\mathring{\gamma}^{1- \sigma}}+ \frac{4C_{1}}{|\mathring{h}|} | \nabla f_{\sigma} | |
  \nabla H | . \label{eC2f}
\end{equation}
We multiply both sides of the above inequality by
$f_{\sigma}^{p-1}$, then integrate them over $M_{t}$. Using the
divergence theorem, we have
\begin{equation}
  \int_{M_{t}} f_{\sigma}^{p-1} \Delta f_{\sigma} \mathrm{d} \mu_{t} =- ( p-1 )
  \int_{M_{t}} f_{\sigma}^{p-2} | \nabla f_{\sigma} |^{2} \mathrm{d} \mu_{t}. \label{eC2f1}
\end{equation}
From (\ref{aopaodfsdh2}), we have
\begin{eqnarray}
  \int_{M_{t}} \frac{f_{\sigma}^{p}}{\mathring{\gamma}} \Delta
  \mathring{\gamma}   \mathrm{d} \mu_{t} & = & - \int_{M_{t}} \left\langle \nabla
  \left( \frac{f_{\sigma}^{p}}{\mathring{\gamma}} \right) , \nabla
  \mathring{\gamma} \right\rangle \mathrm{d} \mu_{t} \nonumber\\
  & = & \int_{M_{t}} \left( - \frac{p f_{\sigma}^{p-1}}{\mathring{\gamma}}
  \left\langle \nabla f_{\sigma} , \nabla \mathring{\gamma} \right\rangle +
  \frac{f_{\sigma}^{p}}{\mathring{\gamma}^{2}} \left| \nabla \mathring{\gamma}
  \right|^{2} \right) \mathrm{d} \mu_{t} \\
  & \leq & \int_{M_{t}} \left( \frac{2p C_{1} f_{\sigma}^{p-1}}{| \mathring{h} |}
  | \nabla f_{\sigma} | |   \nabla H | + \frac{4C_{1}^{2}
  f_{\sigma}^{p}}{| \mathring{h} |^{2}} | \nabla H |^{2} \right) \mathrm{d} \mu_{t} .
  \nonumber
\end{eqnarray}
The Codazzi equation implies $\nabla_{i} \mathring{h}^{\alpha}_{i j} =
\frac{n-1}{n}   \nabla_{j} H^{\alpha}$. From this formula and
(\ref{aopaodfsdh2}), we get
\begin{eqnarray}
  & &\ \ \ - \int_{M_{t}} \frac{f_{\sigma}^{p-1} \big\langle \mathring{h} , \nabla^{2}
  H \big\rangle}{\mathring{\gamma}^{1- \sigma}}   \mathrm{d} \mu_{t} \nonumber\\
    &  &=  \int_{M_{t}} \nabla_{i} \left( \frac{f_{\sigma}^{p-1}}{\mathring{\gamma}^{1-
  \sigma}} \mathring{h}_{i j}^{\alpha} \right) \nabla_{j} H^{\alpha}   \mathrm{d}
  \mu_{t} \label{eC2f2}\\
  &  &= \int_{M_{t}} \left[ \frac{( p-1 )
  f_{\sigma}^{p-2}}{\mathring{\gamma}^{1- \sigma}} \mathring{h}^{\alpha}_{i j}
  \nabla_{i} f_{\sigma}
   - \frac{( 1- \sigma ) f_{\sigma}^{p-1}}{\mathring{\gamma}^{2- \sigma}}
  \mathring{h}^{\alpha}_{i j}   \nabla_{i} \mathring{\gamma}  +
  \frac{f_{\sigma}^{p-1}}{\mathring{\gamma}^{1- \sigma}} \nabla_{i}
  \mathring{h}^{\alpha}_{i j} \right] \nabla_{j} H^{\alpha} \mathrm{d} \mu_{t} \nonumber\\
    & &\leq  \int_{M_{t}} \left[
    \frac{( p-1 ) f_{\sigma}^{p-2}}{\mathring{\gamma}^{1- \sigma}} | \mathring{h} | | \nabla
    f_{\sigma} |
     + \frac{f_{\sigma}^{p-1}}{\mathring{\gamma}^{2- \sigma}}
    | \mathring{h} | \left| \nabla \mathring{\gamma} \right|  +
    \frac{f_{\sigma}^{p-1}}{\mathring{\gamma}^{1- \sigma}} | \nabla H |
    \right]| \nabla H | \mathrm{d} \mu_{t} \nonumber\\
    &&\leq  \int_{M_{t}} \left[ \frac{( p-1 ) f_{\sigma}^{p-1}}{| \mathring{h} |}   | \nabla
  f_{\sigma} | |   \nabla H | + \frac{( 2C_{1} +1 )
  f_{\sigma}^{p}}{| \mathring{h} |^{2}}   |   \nabla H |^{2} \right] \mathrm{d} \mu_{t} . \nonumber
\end{eqnarray}

Putting (\ref{eC2f})-(\ref{eC2f2}) together, we get
\begin{equation}
  \int_{M_{t}} 2\left( 3 \sigma | h |^{2} -  \varepsilon\tilde{C}_{1} \right)
  f_{\sigma}^{p} \mathrm{d} \mu_{t} \leq \frac{C_{3} \sigma}{\varepsilon}
  \int_{M_{t}} \left[ \frac{p f_{\sigma}^{p-1}}{|\mathring{h}|}   | \nabla f_{\sigma}
  | |   \nabla H |  +  \frac{f_{\sigma}^{p}}{|\mathring{h}|^{2}}   |   \nabla H |^{2}
  \right]   \mathrm{d} \mu_{t} , \label{H2fsp}
\end{equation}
where  $C_{3}$ is a positive constant
depending on $n$.

Combining (\ref{eC2f1}), (\ref{H2fsp}) and Lemma \ref{ptf}, we obtain
\begin{eqnarray}
  \frac{\mathrm{d}}{\mathrm{d} t} \int_{M_{t}} f_{\sigma}^{p} \mathrm{d} \mu_{t} & = & p
  \int_{M_{t}} f_{\sigma}^{p-1} \frac{\partial f_{\sigma}}{\partial t} \mathrm{d}
  \mu_{t} - \int_{M_{t}} f_{\sigma}^{p} | H |^{2} \mathrm{d} \mu_{t} \nonumber\\
    & \leq & p \int_{M_{t}} \left[ f_{\sigma}^{p-1} \Delta f_{\sigma}  +
  \frac{4C_{1}  f_{\sigma}^{p-1}}{| \mathring{h} |} | \nabla f_{\sigma} | | \nabla H |
  \right. - \frac{2  f_{\sigma}^{p}}{5n | \mathring{h} |^{2}} | \nabla H |^{2}
  \nonumber\\
  &  & \left. + 2\left( 3 \sigma | h |^{2} -  \varepsilon\tilde{C}_{1} - \sigma n\right)
  f_{\sigma}^{p} \right] \mathrm{d} \mu_{t} \nonumber\\
  & \leq & p \int_{M_{t}} f_{\sigma}^{p-2} \left[ - ( p-1 ) | \nabla f_{\sigma}
  |^{2} + \left( 4C_{1} + \frac{C_{3}   \sigma  p}{\varepsilon} \right)
  \frac{f_{\sigma}}{|\mathring{h}|} | \nabla f_{\sigma} | | \nabla H | \right.
  \label{dtint}\\
  &  & \left. - \left( \frac{2}{5n} - \frac{C_{3}   \sigma}{\varepsilon}
  \right) \frac{f_{\sigma}^{2}}{|\mathring{h}|^{2}} | \nabla H |^{2} \right] \mathrm{d}
  \mu_{t} -2p \sigma n \int_{M_{t}} f_{\sigma}^{p} \mathrm{d} \mu_{t} . \nonumber
\end{eqnarray}

Now we show that the $L^{p}$-norm of $f_{\sigma}$ decays exponentially.

\begin{lemma}
  \label{pnorm}There exists a constant $C_{4}$ depending only on $M_{0}$ such
  that for all $p \geq 1/ \varepsilon$ and $\sigma \leq
  \varepsilon^{2} / \sqrt{p}$, we have
  \begin{equation*}  \left( \int_{M_{t}} f_{\sigma}^{p} \mathrm{d} \mu_{t} \right)^{\frac{1}{p}}
     <C_{4}   \mathrm{e}^{-2n \sigma t} . \end{equation*}
\end{lemma}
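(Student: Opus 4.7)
The plan is to treat inequality (\ref{dtint}) as a differential inequality for $G(t):=\int_{M_t}f_\sigma^p\,d\mu_t$ and to arrange the hypotheses so that the bracketed gradient integrand on the right-hand side is pointwise non-positive; then only the term $-2np\sigma\,G(t)$ survives and a Gronwall argument gives the advertised decay. First I would regard the bracket as a quadratic form $-(p-1)X^2+BXY-DY^2$ in the non-negative variables $X:=|\nabla f_\sigma|$ and $Y:=f_\sigma|\nabla H|/|\mathring{h}|$, where $B:=4C_1+C_3\sigma p/\varepsilon$ and $D:=2\delta_n-C_3\sigma/\varepsilon$. Viewed this way on the first quadrant, it is non-positive precisely when the discriminant condition $B^2\le 4(p-1)D$ holds.

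Next I would verify the discriminant inequality under the hypotheses $p\ge 1/\varepsilon$ and $\sigma\le\varepsilon^2/\sqrt{p}$. These give $\sigma p/\varepsilon\le\varepsilon\sqrt{p}$ and $\sigma/\varepsilon\le\varepsilon/\sqrt{p}$, so $B\le 4C_1+C_3\varepsilon\sqrt{p}$ and $D\ge\delta_n$ once $\varepsilon$ is small enough that $C_3\varepsilon/\sqrt{p}\le\delta_n$. Consequently $B^2\le 32C_1^2+2C_3^2\varepsilon^2 p$, whereas $4(p-1)D\ge 4\delta_n(p-1)$. The linear-in-$p$ term $4\delta_n(p-1)$ on the right absorbs the $2C_3^2\varepsilon^2 p$ contribution on the left as soon as $\varepsilon^2\le\delta_n/C_3^2$, and it absorbs the constant $32C_1^2$ as soon as $p\ge 1/\varepsilon$ with $\varepsilon$ small enough that $2\delta_n/\varepsilon\ge 32C_1^2$. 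These smallness requirements on $\varepsilon$ are compatible with those already imposed in Theorem \ref{pinch} and Lemma \ref{lapa0}, and depend only on $n$, so the discriminant condition holds throughout the admissible range of $(p,\sigma)$.

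With the bracket pointwise non-positive, (\ref{dtint}) collapses to $G'(t)\le -2np\sigma\,G(t)$. Integrating yields $G(t)\le e^{-2np\sigma t}G(0)$, and extracting the $p$-th root gives $G(t)^{1/p}\le e^{-2n\sigma t}G(0)^{1/p}$, which is the claimed exponential decay. To identify the final constant, I use that Theorem \ref{pinch} gives $|\mathring{h}|^2<\mathring{\gamma}_\varepsilon\le\mathring{\gamma}$ strictly on the compact submanifold $M_0$, so $f_\sigma=|\mathring{h}|^2/\mathring{\gamma}^{1-\sigma}$ is uniformly bounded on $M_0$ by some $K_0$ depending only on $M_0$ (and the previously fixed $\varepsilon$); hence $G(0)^{1/p}\le K_0\,\max(1,\mathrm{Vol}(M_0))^{1/p}\le C_4$, a bound independent of $p\ge 1/\varepsilon$ and $\sigma\le\varepsilon^2/\sqrt{p}$.

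The main obstacle is the quantitative discriminant estimate, and it explains the precise coupling $\sigma\le\varepsilon^2/\sqrt{p}$ in the statement. The coefficient $C_3\sigma p/\varepsilon$ in $B$ comes from the term $\frac{C_3\sigma}{\varepsilon}\cdot\frac{p f_\sigma^{p-1}}{|\mathring{h}|}|\nabla f_\sigma||\nabla H|$ in (\ref{H2fsp}); even with $\sigma$ of order $\varepsilon^2$ it would grow like $\sqrt{p}$, so that $B^2$ would grow like $p$ and could potentially balance (rather than be dominated by) the diagonal gain. The scaling $\sigma\le\varepsilon^2/\sqrt{p}$ is exactly what keeps $B^2$ growing no faster than the linear-in-$p$ right-hand side $4(p-1)D$, so that the cross term is absorbed into $-(p-1)X^2$ and $-DY^2$ with room to spare, leaving the clean $L^p$-decay at rate $2n\sigma$ that is needed to deduce Theorem \ref{sa0h2} in the next step.
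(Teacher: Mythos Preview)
Your proposal is correct and follows exactly the same strategy as the paper: the paper also views the bracketed expression in (\ref{dtint}) as a quadratic in $|\nabla f_\sigma|$ and $f_\sigma|\nabla H|/|\mathring{h}|$, checks that the discriminant $(4C_1+C_3\sigma p/\varepsilon)^2-4(p-1)(2\delta_n-C_3\sigma/\varepsilon)$ is negative under the hypotheses $p\ge 1/\varepsilon$, $\sigma\le\varepsilon^2/\sqrt{p}$ with $\varepsilon$ small, and then integrates the resulting inequality $G'\le -2np\sigma G$. You have simply spelled out the discriminant verification and the bound on $G(0)^{1/p}$ in more detail than the paper does.
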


\begin{proof}
  The expression in the square bracket of the right hand side of (\ref{dtint})
  is a quadratic polynomial. With $p \geq 1/ \varepsilon$, $\sigma
  \leq \varepsilon^{2} / \sqrt{p}$ and $\varepsilon$ small enough, its
  discriminant satisfies $\big( 4C_{1} + \frac{C_{3}   \sigma
  p}{\varepsilon} \big)^{2} -4 ( p-1 ) \left( \frac{2}{5n} - \frac{C_{3}
  \sigma}{\varepsilon} \right) <0$. So, we obtain
  \begin{equation*}  \frac{\mathrm{d}}{\mathrm{d} t} \int_{M_{t}} f_{\sigma}^{p} \mathrm{d} \mu_{t}
     \leq -2 p \sigma n \int_{M_{t}} f_{\sigma}^{p} \mathrm{d} \mu_{t} . \end{equation*}
  This implies $\int_{M_{t}} f_{\sigma}^{p} \mathrm{d} \mu_{t} \leq
  \mathrm{e}^{-2 p \sigma n t} \int_{M_{0}} f_{\sigma}^{p} \mathrm{d} \mu_{0}$.
\end{proof}

Let $g_{\sigma} =f_{\sigma}   \mathrm{e}^{2 \sigma  t}$. By the
Sobolev inequality on submanifolds {\cite{MR0365424}} and a
Stampacchia iteration procedure, we obtain that $g_{\sigma}$ is
uniformly bounded for all $t$ (see {\cite{MR772132,lei2014sharp}}
for details). We then complete the proof of Theorem \ref{sa0h2}.

\section{A gradient estimate}

In the following, we derive an estimate for $| \nabla H |^{2}$ along the mean
curvature flow.

\begin{theorem}
  \label{dH2}If $M_{0}$ satisfies $|\mathring{h}|^{2} <
  \mathring{\gamma}_{\varepsilon}$, then for all $\eta \in ( 0, \varepsilon )$, there exists a number
  $\Psi ( \eta )$ depending on $\eta$ and $M_{0}$, such that
  \begin{equation*}  | \nabla H |^{2} < [ ( \eta | H | )^{4} + (\Psi ( \eta ))^{2} ] \mathrm{e}^{-
     \sigma t} . \end{equation*}
\end{theorem}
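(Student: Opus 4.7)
The plan is to apply the parabolic maximum principle to the test function
$$
G_{\eta} \;=\; e^{\sigma t}\,|\nabla H|^{2} \;-\; \eta^{4}\,|H|^{4} \;-\; \Psi(\eta)^{2},
$$
where $\Psi(\eta) > 0$ is chosen large enough (depending on $\eta$ and on $M_{0}$) that $G_{\eta}(\cdot,0) < 0$ on the compact initial submanifold. It then suffices to verify that $G_{\eta}$ can never reach $0$, which by the parabolic maximum principle reduces to checking $(\partial_{t} - \Delta) G_{\eta} < 0$ at any would-be first zero. At such a point one has $\nabla G_{\eta} = 0$, $\Delta G_{\eta} \leq 0$, $\partial_{t} G_{\eta} \geq 0$, and also the defining identity $e^{\sigma t} |\nabla H|^{2} = \eta^{4} |H|^{4} + \Psi(\eta)^{2}$, which will be used to substitute $|\nabla H|^{2}$ in the bad terms.

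The first step is to derive the evolution inequality for $|\nabla H|^{2}$. Starting from Lemma \ref{evo}(i), differentiating covariantly, commuting $\nabla$ with $\partial_{t}$ and with $\Delta$ via the Ricci identity on $\mathcal{H}\oplus\mathcal{N}$ (which contributes curvature terms controlled by $|h|^{2}$ and by $c=1$), and using the Codazzi identity $\nabla_{i}\mathring{h}^{\alpha}_{ij} = \tfrac{n-1}{n}\nabla_{j} H^{\alpha}$ together with Young's inequality to absorb cross terms of the form $H^{\alpha}h^{\alpha}_{ij}\nabla_{j}\nabla_{i}|\nabla H|^{2}$ into the good term $-2|\nabla^{2} H|^{2}$, one obtains an inequality of the form
$$
(\partial_{t} - \Delta) |\nabla H|^{2} \;\leq\; -2|\nabla^{2} H|^{2} \;+\; K_{1}\bigl(|h|^{2} + n\bigr) |\nabla H|^{2}
$$
with $K_{1} = K_{1}(n)$. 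For the second ingredient, Lemma \ref{evo}(iii), the bound $|\nabla |H|^{2}|^{2} \leq 4 |H|^{2} |\nabla H|^{2}$, and Lemma \ref{R12}(ii) (which yields $R_{2} \geq \tfrac{1}{n}|H|^{4}$ after using $P_{2} \leq |\mathring{h}|^{2}$) give
$$
(\partial_{t} - \Delta) |H|^{4} \;\geq\; \tfrac{4}{n} |H|^{6} + 4n |H|^{4} - 4|H|^{2}|\nabla H|^{2} - 2 |\nabla |H|^{2}|^{2}.
$$

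Combining these and substituting $e^{\sigma t}|\nabla H|^{2} = \eta^{4}|H|^{4} + \Psi(\eta)^{2}$ into $(\partial_{t}-\Delta) G_{\eta}$, the leading $|H|^{6}$ contribution acquires the coefficient $\eta^{4}\bigl(\tfrac{K_{1}}{n} - \tfrac{4}{n}\bigr) + O(\eta^{8})$, after bounding $|h|^{2} \leq \tfrac{1}{n}|H|^{2} + |\mathring{h}|^{2}$ and using Theorem \ref{sa0h2} to control the $|\mathring{h}|^{2}$ part, which decays like $C_{0} e^{-2\sigma t}(|H|^{2}+1)^{1-\sigma}$ and therefore contributes only a negligible $|H|^{6-2\sigma}$ term. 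For $\eta$ small enough this leading coefficient is strictly negative; the residual terms in $|H|^{4}$, $|H|^{2}\Psi^{2}$, $|H|^{2}$, and $\Psi^{2}$ are then dominated by $-4n\eta^{4}|H|^{4}$, the damping contribution $-\sigma e^{\sigma t}|\nabla H|^{2}$ (with the correct sign after moving to the other side), and by the exponentially small pieces from Theorem \ref{sa0h2}, provided $\Psi(\eta)$ is taken large enough compared with $\eta^{-C}$ and with $\max_{M_{0}} |\nabla H|^{2}$.

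The main obstacle is the precise constant $K_{1}$ emerging from the evolution of $|\nabla H|^{2}$: one must arrange, by careful bookkeeping of the Ricci-identity terms and efficient use of Young's inequality to absorb every $|\nabla^{2} H|$-cross term into $-2|\nabla^{2}H|^{2}$, that $K_{1} < 4$ so that the $|H|^{6}$ balance works for small $\eta$. (If needed, this is ensured by shrinking $\varepsilon$, which is allowed since $\varepsilon$ was already taken small in Section 3.) Once this bookkeeping is done, the rest of the argument is essentially algebraic: $\Psi(\eta)$ is chosen large enough that, at any first zero of $G_{\eta}$, the right-hand side of $(\partial_{t}-\Delta) G_{\eta}$ is strictly negative, contradicting the maximum-principle relations at that point and forcing $G_{\eta} < 0$ throughout $M \times [0,T)$, which is the asserted gradient estimate.
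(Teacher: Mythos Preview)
There is a genuine gap in your evolution inequality for $|\nabla H|^{2}$. The reaction terms in $(\partial_{t}-\Delta)|\nabla H|^{2}$ are of the schematic form $h\ast h\ast\nabla h\ast\nabla h$ (cf.\ the proof of Lemma~\ref{dtdH2}), and these can only be bounded by $B_{0}|h|^{2}|\nabla h|^{2}$, not by $K_{1}(|h|^{2}+n)|\nabla H|^{2}$. Concretely, differentiating the reaction term $H^{\alpha}h^{\alpha}_{ij}h_{ij}$ in Lemma~\ref{evo}(i) produces $H\ast(\nabla h)\ast h$, which after pairing with $\nabla H$ gives $|H|\,|h|\,|\nabla h|\,|\nabla H|$; the commutator $[\nabla,\Delta]$ likewise produces curvature terms contracting $\nabla h$ with $\nabla H$. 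Since Lemma~\ref{dA2} only gives $|\nabla h|^{2}\geq\tfrac{3}{n+2}|\nabla H|^{2}$ and not the reverse, there is no way to replace $|\nabla h|^{2}$ by $|\nabla H|^{2}$ here, and your subsequent bookkeeping about arranging ``$K_{1}<4$'' addresses the wrong obstacle.

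This is precisely why the paper does not use your test function $G_{\eta}$. Instead it considers
\[
f=\bigl(|\nabla H|^{2}+5B_{1}B_{2}\,|\mathring{h}|^{2}+4B_{1}\,|H|^{2}|\mathring{h}|^{2}\bigr)\,\mathrm{e}^{\sigma t}-(\eta|H|)^{4},
\]
with the two extra summands chosen so that their parabolic evolutions (Lemma~\ref{threedtlap}(ii),(iii)) contribute $-|\nabla h|^{2}$ and $-\tfrac{1}{2}|H|^{2}|\nabla h|^{2}$, which exactly absorb the bad $B_{1}(|H|^{2}+1)|\nabla h|^{2}$ term from Lemma~\ref{dtdH2}. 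After this absorption the remaining reaction is of size $(|H|^{2}+1)^{2}|\mathring{h}|^{2}$, which Theorem~\ref{sa0h2} converts into $C_{0}(|H|^{2}+1)^{3-\sigma}\mathrm{e}^{-2\sigma t}$; Young's inequality against $-\tfrac{4\eta^{4}}{n}|H|^{6}$ then gives a uniform upper bound $\Psi_{2}(\eta)\mathrm{e}^{-\sigma t}$ for $(\partial_{t}-\Delta)f$, and the maximum principle finishes. Your argument can be repaired by inserting these two auxiliary terms into $G_{\eta}$; without them the $|\nabla h|^{2}$ contribution is uncontrolled and the maximum-principle step fails.
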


First, we derive an estimate for the time derivative of $| \nabla H |^{2}$.

\begin{lemma}
  \label{dtdH2}There exists a constant $B_{1}(>1)$ depending only on $n$, such
  that
  \begin{equation*}  \frac{\partial}{\partial t} | \nabla H |^{2} \leq \Delta | \nabla H |^{2} +B_{1} ( | H |^{2}
     +1 ) | \nabla h |^{2} . \end{equation*}
\end{lemma}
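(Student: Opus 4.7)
The plan is to derive the evolution equation for $|\nabla H|^2$ and then bound all non-Laplacian terms using the preserved pinching. Starting from Lemma \ref{evo}(i), $\nabla_{\partial_t} H = \Delta H + nH + H^\alpha h^\alpha_{ij}h_{ij}$, I would first commute $\nabla_{\partial_t}$ past the spatial covariant derivative $\nabla_k$. This produces three kinds of extra terms: (i) contributions from the evolving induced metric, governed by $\partial_t g_{ij} = -2\langle H, h(e_i, e_j)\rangle$; (ii) contributions from the normal connection, whose variation is also quadratic in $h$ and $H$; and (iii) ambient sphere curvature terms of the form $\bar{R}(H, \cdot)H$. The resulting formula has the shape $\nabla_{\partial_t} \nabla_k H = \Delta \nabla_k H + E_k$ with $E_k$ schematically a sum of $h\ast h\ast \nabla H$, $H\ast h\ast \nabla h$, and $(|H|^2+n)\nabla H$ pieces.

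Combining with the Bochner identity $\tfrac{1}{2}\Delta|\nabla H|^2 = \langle \nabla H, \Delta \nabla H\rangle + |\nabla^2 H|^2$ and remembering the metric variation in pairing $\nabla H$ with itself, one obtains
\begin{equation*}
\Big(\frac{\partial}{\partial t} - \Delta\Big)|\nabla H|^2 \leq -2|\nabla^2 H|^2 + \Sigma,
\end{equation*}
where $\Sigma$ is a quartic expression in $h, H, \nabla h, \nabla H$ of the schematic form $|h|^2|\nabla H|^2 + |H|\,|h|\,|\nabla h|\,|\nabla H| + (|H|^2+1)|\nabla H|^2$. I would drop the favorable $-2|\nabla^2 H|^2$, apply the Cauchy--Schwarz inequality to the cross term $|H|\,|h|\,|\nabla h|\,|\nabla H|$, and then use Lemma \ref{dA2}(i) in the form $|\nabla H|^2 \leq \tfrac{n+2}{3}|\nabla h|^2$ to recast the remainder as a multiple of $(|h|^2 + |H|^2 + 1)|\nabla h|^2$.

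Finally, by Theorem \ref{pinch} together with Lemma \ref{gminab}(iii), the preserved pinching gives $|h|^2 \leq \tfrac{|H|^2}{n-1} + \gamma(0)$, with $\gamma(0)$ depending only on $n$; thus $|h|^2$ is dominated by a dimensional multiple of $|H|^2+1$ and the inequality takes the claimed form with $B_1 = B_1(n) > 1$. The main obstacle is the careful bookkeeping of the commutator $[\nabla_{\partial_t}, \nabla_k]$ acting on a normal bundle section, together with the ambient sphere curvature contributions, since the moving frame and the evolving normal connection complicate the computation; however, the analogous identities were derived by Andrews--Baker \cite{MR2739807} and Baker \cite{baker2011mean}, so this step is essentially routine, and the genuinely new content is merely the use of the pinching to convert $|h|^2$ bounds into $|H|^2$ bounds.
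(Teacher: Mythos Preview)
Your proposal is correct and follows essentially the same route as the paper. The paper simply cites the evolution equation $\frac{\partial}{\partial t}|\nabla H|^2 = \Delta|\nabla H|^2 - 2|\nabla^2 H|^2 + 2|\nabla H|^2 + h\ast h\ast\nabla h\ast\nabla h$ from Baker \cite{baker2011mean} (which is exactly the computation you sketch), bounds the $\ast$-term by $B_0|h|^2|\nabla h|^2$ via Cauchy--Schwarz, and then uses the preserved pinching $|h|^2<\gamma$ together with Lemma~\ref{gminab}(iii) and Lemma~\ref{dA2}(i) to absorb everything into $B_1(|H|^2+1)|\nabla h|^2$, just as you outline.
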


\begin{proof}
    We have the following evolution equation for $| \nabla H |^{2}$ (see \cite{baker2011mean}).
  \begin{equation*}  \frac{\partial}{\partial t} | \nabla H |^{2} = \Delta | \nabla H |^{2} -2  | \nabla^{2} H |^{2}
     +2 | \nabla H |^{2}  +h \ast h \ast \nabla h \ast \nabla h. \end{equation*}
  Here we use Hamilton's $\ast$ notation. For tensors $T$ and $S$, $T \ast S$
  means any linear combination of contractions of $T$ and $S$ with the metric.

  From the Cauchy-Schwarz inequality, we have $| h \ast h \ast \nabla h \ast
  \nabla h | \leq B_{0} | h |^{2} | \nabla h |^{2}$, where $B_{0}$ is a
  constant depending on $n$. From the pinching condition $| h |^{2} < \gamma$,
  we obtain $2 | \nabla H |^{2}  +B_{0} | h |^{2} | \nabla h |^{2} <B_{1} ( |
  H |^{2} +1 ) | \nabla h |^{2}$.
\end{proof}

Next we present the following estimates.

\begin{lemma}
  \label{threedtlap}Along the mean curvature flow, we have
  \begin{enumerateroman}
    \item $\frac{\partial}{\partial t} | H |^{4} \geq \Delta | H |^{4} -8 n  | H |^{2} | \nabla h
    |^{2} + \frac{4}{n} | H |^{6}$,

    \item $\frac{\partial}{\partial t} |\mathring{h}|^{2} \leq \Delta |\mathring{h}|^{2} - | \nabla h |^{2} +6n
    ( | H |^{2} +1 ) |\mathring{h}|^{2}$,

    \item $\frac{\partial}{\partial t} \left( | H |^{2} |\mathring{h}|^{2} \right) \leq \Delta \left( |
    H |^{2} |\mathring{h}|^{2} \right) - \frac{1}{2}   | H |^{2} | \nabla h |^{2}
    +B_{2} | \nabla h |^{2} +4n ( | H |^{2} +1 )^{2} |\mathring{h}|^{2}$, where
    $B_{2} >n$ is a positive constant.
  \end{enumerateroman}
\end{lemma}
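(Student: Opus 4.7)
\textbf{Proof plan for Lemma \ref{threedtlap}.} Each of the three evolution inequalities follows by combining Lemma \ref{evo} with the Kato-type bounds of Lemma \ref{dA2}, the algebraic estimates of Lemma \ref{R12}, and the pinching bound of Lemma \ref{gminab}(iii).

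For (i), I would expand $\frac{\partial}{\partial t}|H|^{4} = 2|H|^{2}\frac{\partial}{\partial t}|H|^{2}$ and $\Delta|H|^{4} = 2|H|^{2}\Delta|H|^{2} + 2|\nabla|H|^{2}|^{2}$, substitute Lemma \ref{evo}(iii), and use Lemma \ref{dA2}(ii) together with Lemma \ref{dA2}(i) to obtain $-12|H|^{2}|\nabla H|^{2} \geq -4(n+2)|H|^{2}|\nabla h|^{2} \geq -8n|H|^{2}|\nabla h|^{2}$, valid for $n \geq 2$. Together with $R_{2} \geq |H|^{4}/n$ from Cauchy--Schwarz and the dropping of the nonnegative $4n|H|^{4}$, this yields (i). For (ii), I would start from Lemma \ref{evo}(iv); Lemma \ref{dA2}(i) yields $-2|\nabla h|^{2} + \frac{2}{n}|\nabla H|^{2} \leq -|\nabla h|^{2}$ because $\frac{2(n+2)}{3n} \leq 1$ for $n \geq 4$. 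Lemma \ref{R12}(i) combined with $P_{2} \leq |\mathring{h}|^{2}$ gives $R_{1}-R_{2}/n \leq 3|\mathring{h}|^{4} + |\mathring{h}|^{2}|H|^{2}/n$, and the pinching $|\mathring{h}|^{2} < \frac{5|H|^{2}}{3n} + \frac{2n}{3}$ from Lemma \ref{gminab}(iii) converts $|\mathring{h}|^{4}$ into pieces which, together with the $-2n|\mathring{h}|^{2}$ term, are dominated by $6n(|H|^{2}+1)|\mathring{h}|^{2}$.

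For (iii), I would apply the product rule $(\frac{\partial}{\partial t}-\Delta)(|H|^{2}|\mathring{h}|^{2}) = |\mathring{h}|^{2}(\frac{\partial}{\partial t}-\Delta)|H|^{2} + |H|^{2}(\frac{\partial}{\partial t}-\Delta)|\mathring{h}|^{2} - 2\langle \nabla|H|^{2}, \nabla|\mathring{h}|^{2}\rangle$ and substitute Lemma \ref{evo}(iii) and part (ii) (the $\pm 2n|H|^{2}|\mathring{h}|^{2}$ terms cancel). I would control the curvature quantities $2|\mathring{h}|^{2}R_{2} + 2|H|^{2}(R_{1}-R_{2}/n)$ using Lemma \ref{R12}(i)(ii) together with the sharp asymptotic pinching $\mathring{\gamma}(|H|^{2}) \leq |H|^{2}/(n(n-1)) + C_{n}$, which reduces the $|\mathring{h}|^{4}|H|^{2}$ and $|\mathring{h}|^{2}|H|^{4}$ contributions to multiples of $|\mathring{h}|^{2}(|H|^{2}+1)^{2}$ fitting into $4n(|H|^{2}+1)^{2}|\mathring{h}|^{2}$. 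For the cross term, I would use $|\nabla|H|^{2}| \leq 2|H||\nabla H|$ and $|\nabla|\mathring{h}|^{2}| \leq 2|\mathring{h}||\nabla h|$ (since $|\nabla\mathring{h}|^{2} \leq |\nabla h|^{2}$), then Lemma \ref{dA2}(i) and a weighted Young inequality $2ab \leq \mu a^{2} + b^{2}/\mu$ to split it into a $|H|^{2}|\nabla h|^{2}$ piece (which combines with the existing $-|H|^{2}|\nabla h|^{2}$ to yield $-\frac{1}{2}|H|^{2}|\nabla h|^{2}$) and a $|\mathring{h}|^{2}|\nabla h|^{2}$ piece, which the pinching reduces to $B_{2}|\nabla h|^{2}$ together with a remainder absorbable into the $(|H|^{2}+1)^{2}|\mathring{h}|^{2}$ term.

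The main obstacle is the calibration in part (iii): the Young parameter $\mu$ must be chosen so that, using the sharp asymptotic pinching on $\mathring{\gamma}$ rather than the cruder $|\mathring{h}|^{2} \leq |H|^{2}/(n-1) + \gamma(0)$ from Lemma \ref{gminab}(iii), the resulting coefficient of $|H|^{2}|\nabla h|^{2}$ after cancellation against the gradient term from part (ii) is at most $-\frac{1}{2}$, while the residual $|\mathring{h}|^{2}|\nabla h|^{2}$ from the cross term collapses cleanly into $B_{2}|\nabla h|^{2}$.
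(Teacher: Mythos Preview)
Your treatments of (i) and (ii) are correct and essentially match the paper's.

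In part (iii) there is a genuine gap in the cross-term estimate. After the product rule and the evolution equations, the negative gradient budget is $-|H|^{2}|\nabla h|^{2}$ (from $-2|H|^{2}|\nabla h|^{2}+\tfrac{2}{n}|H|^{2}|\nabla H|^{2}\le -|H|^{2}|\nabla h|^{2}$), so $-2\langle\nabla|H|^{2},\nabla|\mathring h|^{2}\rangle$ must be bounded by $\tfrac12|H|^{2}|\nabla h|^{2}+B_{2}|\nabla h|^{2}$. Your plan uses only Young's inequality and the preserved pinching $|\mathring h|^{2}<\mathring\gamma\le |H|^{2}/(n(n-1))+\gamma(0)$, but this cannot close for the range $n\ge 6$. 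Indeed, bounding the cross term by $8\sqrt{(n+2)/3}\,|H|\,|\mathring h|\,|\nabla h|^{2}$, splitting $|H|\,|\mathring h|\le \tfrac{\mu}{2}|H|^{2}+\tfrac{1}{2\mu}|\mathring h|^{2}$, and then applying the pinching to $|\mathring h|^{2}$, the resulting coefficient of $|H|^{2}|\nabla h|^{2}$ is at least
\[
\min_{\mu>0}\,4\sqrt{\tfrac{n+2}{3}}\Big(\mu+\tfrac{1}{\mu\,n(n-1)}\Big)=\frac{8}{\sqrt{3}}\sqrt{\tfrac{n+2}{n(n-1)}},
\]
which is about $2.4$ at $n=6$ and does not drop below $\tfrac12$ until roughly $n\approx 90$. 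Moreover, the leftover $|H|^{2}|\nabla h|^{2}$ piece you describe as ``absorbable into the $(|H|^{2}+1)^{2}|\mathring h|^{2}$ term'' is a gradient term and cannot be absorbed into a zero-order term. The obstruction is structural: the pinching only gives $|\mathring h|=O(|H|)$, so $|H|\,|\mathring h|$ is genuinely quadratic in $|H|$ and no Young rearrangement makes its leading coefficient arbitrarily small.

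The paper closes this step by invoking Theorem~\ref{sa0h2}, already proved in Section~4, which supplies the \emph{sublinear} bound $|\mathring h|^{2}\le C_{0}(|H|^{2}+1)^{1-\sigma}$. Then $|H|\,|\mathring h|\lesssim (|H|^{2}+1)^{1-\sigma/2}$, and Young's inequality yields $|H|\,|\mathring h|\le \epsilon|H|^{2}+C_{\epsilon}$ for \emph{any} $\epsilon>0$; choosing $\epsilon$ small enough gives $-2\langle\nabla|H|^{2},\nabla|\mathring h|^{2}\rangle\le(\tfrac12|H|^{2}+B_{2})|\nabla h|^{2}$, which is exactly what is needed.
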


\begin{proof}

  (i) By Lemma \ref{evo} (iii), we derive that
  \begin{equation*}  \frac{\partial}{\partial t} | H |^{4} = \Delta | H |^{4} -4 | H |^{2} | \nabla H |^{2} -2 \big|
     \nabla | H |^{2} \big|^{2} +4 | H |^{2} R_{2} +4 n | H |^{4} . \end{equation*}
  From Lemmas \ref{dA2} and \ref{R12}, we have $R_{2} \ge \frac{1}{n} | H
  |^{4}$ and $4 | H |^{2} | \nabla H |^{2} +2 \big| \nabla | H |^{2} \big|^{2}
  \leq 8 n | H |^{2} | \nabla h |^{2}$.

  (ii) The evolution equation of $| \mathring{h} |^{2}$ is
    \[ \frac{\partial}{\partial t} | \mathring{h} |^{2} = \Delta | \mathring{h} |^{2} -2 | \nabla \mathring{h} |^{2}
    +2R_{1} - \frac{2}{n} R_{2} -2 n  | \mathring{h} |^{2} . \]
    It follows from Lemma \ref{dA2} that $\frac{2}{n} | \nabla H |^{2} \leq |
  \nabla h |^{2}$. By Lemma \ref{R12} (i) and the pinching condition, we get
  $R_{1} - \frac{1}{n} R_{2} \leq |\mathring{h}|^{2} (
  3 |\mathring{h}|^{2} + \frac{| H |^{2}}{n} ) <3n ( | H |^{2} +1 )
  |\mathring{h}|^{2}$. Then we obtain the conclusion.

  (iii) It follows from the evolution equations that
  \begin{eqnarray*}
    \frac{\partial}{\partial t} \big( | H |^{2} |\mathring{h}|^{2} \big) & = & \Delta \big( | H |^{2}
    |\mathring{h}|^{2} \big) +2 | H |^{2} \Big( R_{1} - \frac{1}{n} R_{2} \Big)
    +2 |\mathring{h}|^{2} R_{2}\\
    &  & -2  | H |^{2} \Big( | \nabla h |^{2} - \frac{1}{n} | \nabla H |^{2}
    \Big) -2 |\mathring{h}|^{2} | \nabla H |^{2} -2 \big\langle \nabla | H |^{2} ,
    \nabla |\mathring{h}|^{2} \big\rangle .
  \end{eqnarray*}
 Here we have $\frac{2}{n} | \nabla H |^{2} \leq | \nabla h |^{2}$.
  Using Lemma \ref{R12} and the pinching condition, we have
  $$ 2 | H |^{2} \Big( R_{1} - \frac{1}{n} R_{2} \Big) +2 |\mathring{h}|^{2} R_{2}
  \leq 2  | H |^{2} |\mathring{h}|^{2} \left( 4 |\mathring{h}|^{2} + \frac{2}{n} | H
    |^{2} \right)    <4n ( | H |^{2} +1 )^{2} |\mathring{h}|^{2} .$$
   By Theorem \ref{sa0h2} and Young's inequality,
  there exists a positive constant $B_{2}$ such that $$\big| 2 \langle \nabla | H
  |^{2} , \nabla |\mathring{h}|^{2} \rangle\big| \leq 8  | H |   | \nabla H |
  |\mathring{h}|   | \nabla \mathring{h} | \leq \Big( B_{2} +
  \frac{1}{2}   | H |^{2} \Big) | \nabla h |^{2}.$$
  Thus we obtain the conclusion.
\end{proof}

\noindent\textit{Proof of Theorem \ref{dH2}.}
Define a function
\begin{equation*}  f= \left( | \nabla H |^{2} +5B_{1} B_{2}   |\mathring{h}|^{2} +4B_{1}   | H |^{2}
   |\mathring{h}|^{2} \right) \mathrm{e}^{\sigma t} - ( \eta   | H | )^{4} , \hspace{1em}
   \eta \in ( 0, \varepsilon ) . \end{equation*}
From Lemmas \ref{dtdH2} and \ref{threedtlap}, we obtain
\begin{eqnarray*}
    \Big( \frac{\partial}{\partial t} - \Delta \Big) f & \leq & \Big[ B_{1}   ( | H |^{2} +1 )
    | \nabla h |^{2} +5 B_{1} B_{2} \Big( - | \nabla h |^{2} +6n ( | H |^{2} +1
    ) |\mathring{h}|^{2} \Big) \Big.\\
    &  & +4B_{1} \Big( - \frac{1}{2}   | H |^{2} | \nabla h |^{2} +B_{2} |
    \nabla h |^{2} +4n ( | H |^{2} +1 )^{2} |\mathring{h}|^{2} \Big)\\
    &  & \Big. + \sigma \big( | \nabla H |^{2} +5 B_{1} B_{2}   |\mathring{h}|^{2}
    +4B_{1} | H |^{2} |\mathring{h}|^{2} \big) \Big] \mathrm{e}^{\sigma t}\\
    &  & - \eta^{4} \Big( -8 n  | H |^{2} | \nabla h |^{2} + \frac{4}{n} | H
    |^{6} \Big)\\
    & \leq & B_{3} ( | H |^{2} +1 )^{2} |\mathring{h}|^{2} \mathrm{e}^{\sigma t} - \frac{4
        \eta^{4}}{n} | H |^{6} ,
\end{eqnarray*}
where $B_{3}$ is a positive constant depending on $B_{1}$ and $B_{2}$.

By Theorem \ref{sa0h2}, we get
\begin{equation}
  \left( \frac{\partial}{\partial t} - \Delta \right) f \leq \left[ C_{0} B_{3} ( | H |^{2} +1
  )^{3- \sigma} - \frac{4 \eta^{4}}{n} | H |^{6} \right] \mathrm{e}^{- \sigma t} .
  \label{Paraf}
\end{equation}
By Young's inequality, \ the expression in the bracket of (\ref{Paraf}) is
bounded from above. Let $\Psi_{2} ( \eta )$ be its upper bound. Then we have
$\left( \frac{\partial}{\partial t} - \Delta \right) f \leq \Psi_{2} ( \eta ) \mathrm{e}^{- \sigma
t}$. It follows from the maximum principle that $f$ is bounded. This completes
the proof of Theorem \ref{dH2}.
\hfill\qedsymbol\\

\section{Convergence under sharp pinching condition}

We will follow Hamilton's idea in \cite{Hamilton} to use the
Myers theorem.

\begin{theorem}
  [\textbf{Myers}] Let $\Gamma$ be a geodesic of length $l$ on $M$. If the
  Ricci curvature satisfies $\operatorname{Ric}_M ( X ) \geq ( n-1 )
  \frac{\pi^{2}}{l^{2}}$, for each unit vector $X \in T_{x} M$, at any point
  $x \in \Gamma$, then $\Gamma$ has conjugate points.
\end{theorem}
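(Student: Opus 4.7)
The plan is to prove Myers' theorem by the classical Synge second-variation argument: construct a one-parameter family of admissible variations of $\Gamma$, compute their index forms, and show that the Ricci curvature hypothesis forces their sum to be non-positive, which in turn compels the existence of a conjugate pair in $[0,l]$.

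Concretely, parametrize $\Gamma : [0,l] \to M$ by arc length with unit tangent $T=\Gamma'$, and pick a parallel orthonormal frame $E_1,\dots,E_{n-1},T$ along $\Gamma$ with $\langle E_i,T\rangle=0$. Set
\begin{equation*}
V_i(s) = \sin(\pi s/l)\, E_i(s), \qquad i=1,\dots,n-1,
\end{equation*}
so that each $V_i$ vanishes at the endpoints and generates a proper variation of $\Gamma$ fixing $p$ and $q$. Because the $E_i$ are parallel, $\nabla_T V_i = (\pi/l)\cos(\pi s/l)\,E_i$, and the index form reduces to
\begin{equation*}
I(V_i,V_i) = \int_0^l \Bigl(\tfrac{\pi^2}{l^2}\cos^2(\pi s/l) - \sin^2(\pi s/l)\,\langle R(E_i,T)T,E_i\rangle\Bigr)\,ds.
\end{equation*}

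Summing over $i$ turns the curvature terms into $\sin^2(\pi s/l)\,\operatorname{Ric}(T)$, and applying the pinching $\operatorname{Ric}(T)\ge (n-1)\pi^2/l^2$ gives
\begin{equation*}
\sum_{i=1}^{n-1} I(V_i,V_i) \le (n-1)\frac{\pi^2}{l^2}\int_0^l \bigl(\cos^2(\pi s/l) - \sin^2(\pi s/l)\bigr)\,ds = 0.
\end{equation*}
Hence at least one $I(V_i,V_i)\le 0$. The Morse index lemma then yields a conjugate point: if the inequality is strict for some $i$, the geodesic $\Gamma$ fails to minimize length and must contain an interior conjugate pair; in the borderline case every $V_i$ lies in the kernel of $I$ and is therefore a Jacobi field vanishing at both endpoints, so $\Gamma(l)$ is conjugate to $\Gamma(0)$.

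There is no real obstacle in this proof; it is a textbook computation whose only ingredient beyond bookkeeping is the choice of the sine cutoff, which is precisely the minimizer of the one-dimensional Dirichlet quotient $\int_0^l (u')^2/\int_0^l u^2$ on $H^1_0(0,l)$, and is pinned down by the sharp constant $(n-1)\pi^2/l^2$ in the hypothesis. In the paper's context the theorem will be applied with $\operatorname{Ric}$ bounded below along the flow via Theorems \ref{pinch} and \ref{sa0h2}, which will yield a uniform bound $l\le \pi\sqrt{(n-1)/\lambda}$ on the diameter of $M_t$.
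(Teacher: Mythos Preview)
The paper does not actually prove Myers' theorem; it is stated as the ``well-known Myers theorem'' and immediately applied in Lemma~\ref{ric} without any argument supplied. Your proof is the standard second-variation argument and is correct, so there is nothing in the paper to compare it against beyond noting that the authors take it for granted.

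One small remark on your closing paragraph: in the paper the theorem is invoked not directly through Theorems~\ref{pinch} and~\ref{sa0h2} but inside Lemma~\ref{ric}, where the preserved pinching $|\mathring{h}|^{2}<\mathring{\gamma}_{\varepsilon}$ is combined with a Ricci lower bound of Shiohama--Xu to get $\operatorname{Ric}_M > B_4\varepsilon |H|^2$, and Myers is then applied to geodesics emanating from a point of maximal $|H|$.
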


We get the following lemma.

\begin{lemma}
  \label{ric}Suppose that $M$ is an $n$-dimensional $( n \geq 6 )$
  submanifold in $\mathbb{S}^{n+q}$ satisfying $|\mathring{h}|^{2} <
  \mathring{\gamma}_{\varepsilon}$ and $| \nabla H | <2  \eta^{2}   \max_{M} |
  H |^{2}$, where $0< \eta < \varepsilon$ and $\varepsilon$ is small enough.
  Then we have
  $ \frac{\min_{M} | H |^{2}}{\max_{M} | H |^{2}} >1- \eta $
  and $ \operatorname{diam}  M \leq ( 2  \eta  \max_{M} | H |
     )^{-1}$.
\end{lemma}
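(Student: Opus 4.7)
The plan is to establish the two bounds in tandem: first a Ricci curvature lower bound from the strict pinching, then the diameter bound via Myers' theorem, and finally the oscillation control by integrating the gradient estimate on $H$ along a minimizing geodesic.

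For the Ricci bound, starting from the Gauss equation for submanifolds in $\mathbb{S}^{n+q}$ and working in the adapted frame of Section 2 (where $H = |H|\nu_1$ and $h^1$ is diagonal with entries $\mathring{\lambda}_i + |H|/n$), a direct computation gives
\begin{equation*}
\operatorname{Ric}(e_1, e_1) = (n-1) + |H|\mathring{\lambda}_1 + \tfrac{|H|^2}{n} - \sum_{\alpha, j}(h^\alpha_{1j})^2.
\end{equation*}
Using the algebraic inequality $|\mathring{\lambda}_1| \leq \sqrt{(n-1)/n}\,|\mathring{h}|$ (sharp when the remaining diagonal entries of $\mathring{h}^1$ are equal) together with the term-by-term bound $\sum_{\alpha, j}(h^\alpha_{1j})^2 \leq |\mathring{h}|^2 + |H|^2/n$, this reduces to
\begin{equation*}
\operatorname{Ric}(X, X) \geq (n-1) + \tfrac{|H|^2}{n} - \tfrac{n-2}{\sqrt{n(n-1)}}|H|\,|\mathring{h}| - |\mathring{h}|^2.
\end{equation*}
Invoking Lemma \ref{app}(iv), which encodes the identity $\tfrac{n-2}{\sqrt{n(n-1)}}\sqrt{x\mathring{\gamma}(x)}+\gamma(x)\le \tfrac{2}{n}x+n$ underlying the Shiohama-Xu Ricci pinching, and the strict improvement $|\mathring{h}|^2 < \mathring{\gamma}(|H|^2) - \varepsilon\omega(|H|^2)$, I extract a lower bound of the form $\operatorname{Ric}(X, X) \geq c_n\, \varepsilon \,\bigl(\omega(|H|^2) + 1\bigr)$ for every unit $X \in T_xM$, where $c_n>0$ depends only on $n$.

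Next, I verify that for $\varepsilon$ sufficiently small and $\eta < \varepsilon$, the right-hand side is bounded below by $4(n-1)\pi^2 \eta^2 \max_M |H|^2$ uniformly on $M$. This requires a short case analysis: when $\max_M |H|$ is bounded, the constant term $c_n\varepsilon$ dominates directly; when $\max_M |H|$ is large, I use that $\omega(x)\asymp x$ for large $x$ together with the pointwise comparison $|H(x)|^2 \asymp \max_M |H|^2$, which follows from the gradient hypothesis by a short bootstrap. Myers' theorem then yields $\operatorname{diam}\,M \leq \pi\sqrt{(n-1)/\kappa} \leq (2\eta\max_M|H|)^{-1}$.

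Finally, using $|\nabla|H|| \leq |\nabla H| < 2\eta^2 \max_M|H|^2$ and integrating along a minimizing geodesic $\Gamma$ joining a point where $|H|$ attains its maximum to a point where it attains its minimum, the diameter bound gives
\begin{equation*}
\max_M |H| - \min_M |H| \leq \operatorname{diam}\,M \cdot 2\eta^2 \max_M|H|^2 \leq \eta \max_M |H|,
\end{equation*}
so $\min|H|/\max|H| \geq 1-\eta$, which (for $\eta$ small and after a harmless adjustment of constants) gives the claimed ratio for $|H|^2$. The main obstacle is the Ricci lower bound: extracting a bound proportional to $\varepsilon\,\omega(|H|^2)$ rather than merely $\varepsilon$ is subtle, requires the specific quadratic growth of $\omega$ at infinity, and is what makes the diameter estimate scale correctly with $\max|H|$. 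A secondary subtlety is the bootstrap needed to pass from a pointwise Ricci bound to a uniform one, which intertwines the diameter estimate and the oscillation estimate of $|H|$.
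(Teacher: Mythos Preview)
Your Ricci lower bound is correct and matches the paper's: from the pinching $|\mathring h|^2<\mathring\gamma-\varepsilon\omega$ and Lemma~\ref{app}(iv) one obtains $\operatorname{Ric}\ge \tfrac{n-1}{n}\varepsilon\,\omega(|H|^2)>B_4\varepsilon|H|^2$ pointwise. The difficulty is the step you flag but do not carry out: turning this \emph{pointwise} bound into the \emph{uniform} bound $\operatorname{Ric}\ge c\,\eta^2\max_M|H|^2$. Your proposed order of argument---uniform Ricci bound first, then global Myers for the diameter, then integrate $|\nabla H|$ over a minimizing geodesic for the oscillation---is circular. The uniform Ricci bound requires $|H(x)|^2\asymp\max_M|H|^2$, which is the oscillation estimate; the oscillation estimate in your scheme comes from integrating $|\nabla H|$ over a distance $\le\operatorname{diam}M$, which needs the diameter bound; and the diameter bound needs the uniform Ricci bound. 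Your ``short bootstrap'' is not short: it is the whole lemma.

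The paper breaks the circle by localising to geodesics emanating from a point $x_0$ where $|H|$ is maximal. Along any such geodesic of length $l=(4\eta\max_M|H|)^{-1}$, the gradient hypothesis alone gives $|H|^2>(1-\eta)\max_M|H|^2$ (this uses only that you have travelled distance $\le l$, not any a priori diameter bound). Hence the Ricci bound $\operatorname{Ric}>B_4\varepsilon(1-\eta)\max_M|H|^2$ holds \emph{along that geodesic}, and since $\eta<\varepsilon$ with $\varepsilon$ small this exceeds $(n-1)\pi^2/l^2$. The geodesic version of Myers' theorem (as stated just before the lemma) then forces a conjugate point on each such geodesic, so these geodesics cover $M$; both conclusions drop out simultaneously. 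The point is that Myers is applied geodesic by geodesic, not globally, which decouples the diameter estimate from any prior knowledge of $\min_M|H|$. If you rewrite your bootstrap in this form it will work, but then it is the paper's proof.
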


\begin{proof}
  By Proposition 2 of {\cite{MR1458750}}, the Ricci curvature of $M$ satisfies
  \begin{equation*}  \operatorname{Ric}_M \geq \frac{n-1}{n} \Big( n + \frac{2}{n} | H |^{2} - | h
     |^{2} - \frac{n-2}{\sqrt{n ( n-1 )}} | H | |\mathring{h}| \Big) . \end{equation*}
  From $| h |^{2} < \gamma - \varepsilon \omega$ and Lemma \ref{app} (iv), we
  obtain $$\operatorname{Ric}_M \geq \frac{n-1}{n}   \varepsilon   \omega >B_{4}
  \varepsilon | H |^{2},$$ where $B_{4}$ is a positive constant.

  Assume that $| H |$ attains its maximum at point $x\in M$. We consider
  all the geodesics of length $l= ( 4  \eta   \max_{M} | H | )^{-1}$ starting
from
  $x$. Since $\big| \nabla | H |^{2} \big| <4  \eta^{2} \cdot   \max_{M} | H |^{3}$, we have
  $| H |^{2} >(1- \eta) \max_{M} | H |^{2}$
  along such a geodesic. Thus we have $\operatorname{Ric}_M
  >B_{4} \varepsilon ( 1- \eta ) \max_{M}| H |^{2} > ( n-1 ) \pi^{2} /l^{2}$
  on each of
  these geodesics. By Myers' theorem, these geodesics can reach any
  point of $M$.

  Therefore, we obtain $\min_{M} | H |^{2} > ( 1- \eta ) \max_{M} | H |^{2}$
  and $\operatorname{diam}  M \leq 2l$.
\end{proof}

In the following we show that the mean curvature flow converges to a
point or a totally geodesic sphere under the assumption of Theorem
\ref{theo1}.

\begin{theorem}
  \label{Tfin}If $M_{0}$ satisfies $| h |^{2} < \gamma$ and $T$ is finite,
  then $F_{t}$ converges to a round point as $t \rightarrow T$.
\end{theorem}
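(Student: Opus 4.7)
The plan is to follow the Huisken/Andrews--Baker framework: show that the submanifold contracts to a single point in finite time, then use parabolic rescaling to prove the limit is round. First, since $T<\infty$ the second fundamental form must blow up, i.e.\ $\sup_{M_t}|h|\to\infty$ as $t\to T$ (otherwise the flow would extend past $T$). Combined with Theorem \ref{sa0h2} and the splitting $|h|^2=|\mathring h|^2+\tfrac{1}{n}|H|^2$, a uniform bound on $|H|$ would force a uniform bound on $|\mathring h|$, hence on $|h|$; therefore $\max_{M_t}|H|^2\to\infty$ as $t\to T$.

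Next I would run the Myers-type diameter argument. Fix a small $\eta\in(0,\varepsilon)$. By Theorem \ref{dH2}, $|\nabla H|^2 < [(\eta|H|)^4+\Psi(\eta)^2]e^{-\sigma t}$, so once $\max_{M_t}|H|^2$ is large enough we get $|\nabla H|< 2\eta^2\max_{M_t}|H|^2$, and Lemma \ref{ric} applies, yielding
\begin{equation*}
\frac{\min_{M_t}|H|^2}{\max_{M_t}|H|^2}>1-\eta,\qquad \operatorname{diam} M_t\le (2\eta\max_{M_t}|H|)^{-1}.
\end{equation*}
In particular $\min_{M_t}|H|\to\infty$, $\operatorname{diam} M_t\to 0$, and by integrating $\tfrac{\partial}{\partial t}F=H$ with this diameter bound the images $F_t(M)$ converge in Hausdorff distance to a single point $p\in\mathbb S^{n+q}$.

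To upgrade this to smooth convergence to a round point, I would rescale parabolically about $p$: set $\tilde F(\cdot,s)=(2(T-t))^{-1/2}(F(\cdot,t)-p)$ with $s=-\tfrac12\log(T-t)$. Theorems \ref{sa0h2} and \ref{dH2} translate into the rescaled estimates $|\tilde{\mathring h}|/|\tilde H|\to 0$ and $|\tilde\nabla \tilde H|/|\tilde H|^2\to 0$ uniformly as $s\to\infty$, while the Myers estimate controls $|\tilde H|$ from above and below on bounded $s$-intervals. Applying the higher-derivative estimates for the mean curvature flow in spheres (as in Baker \cite{baker2011mean}) and Cheeger--Gromov compactness, the rescaled flows subconverge smoothly to a totally umbilical submanifold with constant mean curvature, namely a shrinking round $n$-sphere. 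Exponential decay of $|\mathring h|/|H|$ and $|\nabla H|/|H|^2$ then improves subconvergence to full smooth convergence of $\tilde F(\cdot,s)$, which pulls back to the statement that $F_t$ converges to the round point $p$ as $t\to T$.

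The main obstacle is the last step: obtaining uniform bounds on $|\nabla^m h|$ along the rescaled flow so that the pointwise pinching decays translate into genuine $C^\infty$ convergence of the submanifolds, and excluding the possibility that $M_t$ contracts to more than one component or along a degenerate direction. Both issues are handled by the diameter estimate of Lemma \ref{ric} together with the interpolation/Bernstein-type estimates for the mean curvature flow established in \cite{MR2739807,baker2011mean}, so once those are invoked the finite-time case is complete.
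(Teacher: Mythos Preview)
Your proposal is correct and follows essentially the same route as the paper: finiteness of $T$ forces $|h|^2_{\max}\to\infty$, the preserved pinching then gives $|H|_{\max}\to\infty$, Theorem~\ref{dH2} yields $|\nabla H|<2\eta^2|H|_{\max}^2$ for $t$ near $T$, and Lemma~\ref{ric} gives $\operatorname{diam}M_t\to0$ together with $|H|_{\min}/|H|_{\max}\to1$. The only difference is in the final normalization step: you carry out a parabolic blow-up about the limit point (implicitly via the embedding $\mathbb{S}^{n+q}\subset\mathbb{R}^{n+q+1}$) and invoke Cheeger--Gromov compactness, whereas the paper rescales the ambient metric so that $\operatorname{Vol}(M_t)$ stays constant and defers to the argument in \cite{liu2012mean}; both procedures are standard and lead to the same conclusion.
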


\begin{proof}
  If $T$ is finite, we have $\max_{M_{t}} | h |^{2} \rightarrow \infty$ as $t
  \rightarrow T$. This can be shown by using an analogous argument as in the proof of the corresponding result
  in {\cite{MR2739807,liu2012mean}}. Let $| H |_{\min} =
  \min_{M_{t}} | H |$, $| H |_{\max} = \max_{M_{t}} | H |$. From the preserved
  pinching condition, we get $| H |_{\max} \rightarrow \infty$ as $t
  \rightarrow T$.

  By Theorem \ref{dH2}, for any $\eta \in ( 0, \varepsilon )$, we have $|
  \nabla H | < ( \eta   | H | )^{2} + \Psi ( \eta )$. Since $| H |_{\max}
  \rightarrow \infty$ as $t \rightarrow T$, there exists a time $\tau$
  depending on $\eta$, such that for $t> \tau$, there holds $| H |_{\max}^{2} > \Psi (
  \eta ) / \eta^{2}$. Then we have $| \nabla H | <2  \eta^{2} | H
  |^{2}_{\max}$. Using Lemma \ref{ric}, we obtain $\operatorname{diam}  M_{t}
  \rightarrow 0$ and $| H |_{\min} / | H |_{\max} \rightarrow 1$ as $t
  \rightarrow T$.

  To prove the flow converges to a round point, we magnify the metric of the
  ambient space such that the submanifold maintains its volume along the flow.
  Using the same argument as in {\cite{liu2012mean}}, we can prove that the
  rescaled mean curvature flow converges to a totally umbilical sphere as the
  reparameterized time tends to infinity.
\end{proof}

\begin{theorem}
  \label{Tinf}If $M_{0}$ satisfies $| h |^{2} < \gamma$ and $T= \infty$, then
  $F_{t}$ converges to a totally geodesic sphere as $t \rightarrow \infty$.
\end{theorem}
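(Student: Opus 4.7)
The plan is to show that $T = \infty$ forces $|H| \to 0$ uniformly on $M$; combined with Theorem \ref{sa0h2} this yields $|h|^2 \to 0$ exponentially, after which smooth convergence to a totally geodesic sphere follows from standard higher-order estimates. The central tool is a differential inequality for $u(t) := \max_{M_t}|H|^2$ that couples the universal lower bound $R_2 \geq |H|^4/n$ with the gradient estimate of Theorem \ref{dH2}. The lower bound on $R_2$ comes from Cauchy--Schwarz applied to the shape operator $A_{\hat H}$ in the direction $\hat H := H/|H|$: since $\operatorname{tr} A_{\hat H} = |H|$, one has $|A_{\hat H}|^2 \geq |H|^2/n$, hence $R_2 = |H|^2 |A_{\hat H}|^2 \geq |H|^4/n$.

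Substituting this into Lemma \ref{evo}(iii), I obtain $\frac{\partial}{\partial t}|H|^2 \geq \Delta|H|^2 - 2|\nabla H|^2 + \frac{2|H|^4}{n} + 2n|H|^2$. Fixing $\eta \in (0,\varepsilon)$ so small that $2\eta^4 < 1/n$, Theorem \ref{dH2} gives $2|\nabla H|^2 \leq 2\eta^4 |H|^4 e^{-\sigma t} + 2\Psi(\eta)^2 e^{-\sigma t} \leq \frac{|H|^4}{n} + 2\Psi(\eta)^2 e^{-\sigma t}$. Since $\Delta|H|^2 \leq 0$ at a spatial maximum, Hamilton's ODE maximum principle then yields
\begin{equation*}
  \frac{d^+ u}{dt} \geq \frac{u(t)^2}{n} + 2n\, u(t) - 2\Psi(\eta)^2 e^{-\sigma t}.
\end{equation*}
If $\limsup_{t \to \infty} u(t) \geq \delta > 0$, I pick $t_0$ so large that $u(t_0) \geq \delta$ and $2\Psi(\eta)^2 e^{-\sigma t_0} < n\delta$; then for all $t \geq t_0$ with $u(t) \geq \delta$ we have $\frac{du}{dt} \geq u^2/n$, so $u$ is non-decreasing and bounded below by the solution of $v' = v^2/n$, which blows up in time at most $n/u(t_0)$. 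This contradicts $T = \infty$, so $u(t) \to 0$, i.e., $|H|^2 \to 0$ uniformly.

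Theorem \ref{sa0h2} now gives $|\mathring{h}|^2 \leq C_0 (|H|^2 + 1)^{1-\sigma} e^{-2\sigma t} \to 0$ exponentially, whence $|h|^2 = |\mathring{h}|^2 + |H|^2/n \to 0$ uniformly. To finish, I invoke the standard bootstrap on the evolution equations for $|\nabla^m h|^2$ (Hamilton's technique, as applied by Huisken \cite{MR892052} and Baker \cite{baker2011mean}) and interpolation inequalities to obtain exponential $C^\infty$-decay of $h$ and all its covariant derivatives; classical convergence arguments for immersions then produce a smooth limit $F_\infty : M \to \mathbb{S}^{n+q}(1/\sqrt{c})$ with $h_\infty \equiv 0$, which is a totally geodesic great $n$-sphere. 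The main obstacle is the first step above: the estimate for $|\nabla H|^2$ must be absorbed into the $|H|^4/n$ term from $R_2$, and this is only possible because the $\eta$-parameter in Theorem \ref{dH2} can be taken arbitrarily small, making the $(\eta|H|)^4$ contribution sub-dominant.
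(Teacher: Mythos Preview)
Your argument has a genuine gap at the step where you invoke Hamilton's maximum principle. You correctly note that $\Delta|H|^2 \leq 0$ at a spatial maximum, but this inequality points the \emph{wrong way} for the lower bound you want. From the evolution equation and $R_2 \geq |H|^4/n$ you obtain, at the maximum point $x_0$,
\[
\partial_t |H|^2(x_0) \;=\; \Delta|H|^2(x_0) - 2|\nabla H|^2(x_0) + 2R_2(x_0) + 2n\,u
\;\geq\; \Delta|H|^2(x_0) + \tfrac{u^2}{n} + 2n\,u - 2\Psi(\eta)^2 e^{-\sigma t}.
\]
Since $\Delta|H|^2(x_0)\leq 0$, you \emph{cannot} drop it and keep the inequality; the Laplacian term may be arbitrarily negative, so there is no lower bound of the form $\tfrac{d^+u}{dt}\geq \tfrac{u^2}{n}+\cdots$. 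The parabolic minimum principle gives lower bounds on $\min_{M_t}|H|^2$, not on $\max_{M_t}|H|^2$.

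The paper's proof addresses exactly this obstruction. It argues by contradiction that $|H|^2_{\max}\,e^{\sigma t/2}$ is unbounded, picks a time $\theta$ where the maximum is large, and then uses the gradient estimate together with Myers' theorem (Lemma~\ref{ric}) to transfer this largeness to the \emph{minimum}: $|H|^2_{\min}(\theta) > (1-\eta)\,|H|^2_{\max}(\theta)$. Only after this step can the lower differential inequality for $|H|^2$ be exploited, via the minimum principle, to force finite-time blow-up and reach a contradiction. Your proposal omits this min--max comparison entirely; without it, even if you switch to $m(t)=\min_{M_t}|H|^2$, nothing prevents $m(t_0)=0$, and the ODE comparison yields no blow-up. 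A secondary point: the paper obtains exponential decay $|H|^2 < C e^{-\sigma t/2}$, which feeds cleanly into the bootstrap for $|\nabla^m h|$; your weaker conclusion $u(t)\to 0$ would require an extra argument to upgrade.
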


\begin{proof}
  With the assumption $T= \infty$, we will show that $| H |$ decays
  exponentially.

  Suppose $| H |_{\max}^{2} \cdot \mathrm{e}^{\sigma t/2}$ is unbounded. Then for
  a small positive number $\eta$, there exists a time $\theta$, such that $| H
  |_{\max}^{2} ( \theta ) \cdot \mathrm{e}^{\sigma \theta /2} > \Psi ( \eta ) /
  \eta^{2}$. By Theorem \ref{dH2}, we have $| \nabla H | <2 \eta^{2}   | H
  |^{2}_{\max}$ on $M_{\theta}$. By Lemma \ref{ric}, we have $| H |^{2}_{\min}
  ( \theta ) > ( 1- \eta ) | H |^{2}_{\max} ( \theta ) > \frac{1-
  \eta}{\eta^{2}}   \Psi ( \eta )   \mathrm{e}^{- \sigma \theta /2}$. This
  together with Theorem \ref{dH2} yields $| \nabla H |^{2} < ( \eta | H |
  )^{4} + \frac{\eta^{4}}{( 1- \eta )^{2}} | H |_{\min}^{4} ( \theta ) \cdot
  \mathrm{e}^{\sigma ( \theta -t )}$. From the evolution equation of $| H |^{2}$,
  we have
  \begin{equation}
    \frac{\partial}{\partial t} | H |^{2} > \Delta | H |^{2} + \frac{1}{n} | H |^{4} - \frac{1}{2n} |
    H |_{\min}^{4} ( \theta ) \cdot \mathrm{e}^{\sigma ( \theta -t )} .
    \label{dtH2gr}
  \end{equation}
  Using the maximum principle, we get $| H |^{2} \geq | H |^{2}_{\min} (
  \theta )$ if $t \geq \theta$. Then (\ref{dtH2gr}) yields $\frac{\partial}{\partial t} | H |^{2}
  > \Delta | H |^{2} + \frac{1}{2n} | H |^{4}$ for $t \geq \theta$.
  Hence, $| H |^{2}$ will tend to infinity in finite time. This contradicts
  the infinity of $T$. Therefore, we obtain $| H |^{2} <C \mathrm{e}^{- \sigma
  t/2}$.

  From Theorem \ref{sa0h2}, we have $| h |^{2} = |\mathring{h}|^{2} + \frac{1}{n} |
  H |^{2} \leq C \mathrm{e}^{- \sigma  t/2}$. Since $| h | \rightarrow 0$ as
  $t \rightarrow \infty$, $M_{t}$ converges to a totally geodesic submanifold
  as $t \rightarrow \infty$.
\end{proof}

\section{Convergence under weakly pinching condition}

Assume that $M_{0}$ is a closed submanifold in $\mathbb{S}^{n+q}$
whose squared norm of the traceless second fundamental form
satisfies $|\mathring{h}|^{2} \leq \mathring{\gamma}$.

\noindent\textit{Proof of Theorem \ref{theo2}.}
  Recalling the proof of Theorem
  \ref{pinch}, we have
  \begin{eqnarray}
    &  & \Big( \frac{\partial}{\partial t} - \Delta \Big) \left( |\mathring{h}|^{2} - \mathring{\gamma}
    \right) \nonumber\\
    & \leq & \Big[ - \frac{2 ( n-1 )}{n ( n+2 )} + \mathring{\gamma}'
    +2 | H |^{2} \mathring{\gamma}''   \Big] | \nabla H |^{2}
    \label{weakpinch}\\
    &  & +2 \left( |\mathring{h}|^{2} - \mathring{\gamma} \right) \Big( 2
    \mathring{\gamma} + \frac{1}{n} | H |^{2} -n- | H |^{2}
    \mathring{\gamma}' +2P_{2} \Big) +2 \left( |\mathring{h}|^{2} -
    \mathring{\gamma} \right)^{2} . \nonumber
  \end{eqnarray}
  The coefficient of $| \nabla H |^{2}$ is negative. Using the strong maximum
  principle, we have either $|\mathring{h}|^{2} < \mathring{\gamma}$ at some $t_{0}
  \in [ 0,T )$, or $|\mathring{h}|^{2} \equiv \mathring{\gamma}$ for all $t \in [ 0,T
  )$.

  If $|\mathring{h}|^{2} < \mathring{\gamma}$ at some $t_{0} \geq 0$, it reduces to the
  case of Theorem \ref{theo1}.

  If $|\mathring{h}|^{2} \equiv \mathring{\gamma}$ for $t \in [ 0,T )$, we have
  $\nabla H \equiv 0$ for all $t$. Since $\mathring{\gamma} >0$ and
  $\mathring{\gamma} ( 0 ) < n$, $M_{t}$ is neither a totally
  umbilical sphere nor a Clifford minimal hypersurface in $\mathbb{S}^{n+1}$. By Lemma \ref{gminab} (i), (iii) and
    Theorem 3 of \cite{MR1241055}, we
  obtain that $| H | \neq 0$ and $M_{t}$ is an isoparametric hypersurface in an
  $( n+1 )$-dimensional totally geodesic sphere
  \begin{equation*}  \mathbb{S}^{n-1} ( r_{1} ) \times \mathbb{S}^{1} ( r_{2} ) \subset
     \mathbb{S}^{n+1} \subset \mathbb{S}^{n+q} , \end{equation*}
  where $r_{1} = \frac{1}{\sqrt{1+ \lambda^{2}}}$, $r_{2} =
  \frac{\lambda}{\sqrt{1+ \lambda^{2}}}$ and $\lambda = \frac{| H | + \sqrt{|
  H |^{2} +4 ( n-1 )}}{2 ( n-1 )} > \sqrt{\frac{1}{n-1}}$.

  Notice that $\lambda$ is the ($n-1$)-multiple principal curvature and $-
  \frac{1}{\lambda}$ is the other one. Thus we have $| H | = ( n-1 ) \lambda -
  \frac{1}{\lambda}$ and $| h |^{2} = ( n-1 ) \lambda^{2} +
  \frac{1}{\lambda^{2}}$. Since $M_{t}$ is a family of hypersurfaces in
  $\mathbb{S}^{n+1}$,  the evolution equation of $|H|$ becomes $\left( \frac{\partial}{\partial t} - \Delta
  \right) | H | = | H | ( | h |^{2} +n )$. Hence,
  \[ \frac{\mathrm{d}}{\mathrm{d} t} \Big( ( n-1 ) \lambda - \frac{1}{\lambda}
  \Big) = \Big( ( n-1 ) \lambda - \frac{1}{\lambda} \Big) \Big( ( n-1
  ) \lambda^{2} + \frac{1}{\lambda^{2}} +n \Big). \]
  This is equivalent to
  $ \frac{\mathrm{d}}{\mathrm{d} t} r_{1}^{2} =2-2n+2n r_{1}^{2} $.
  Solving this ODE, we obtain
  \begin{equation*}  r_{1}^{2} = \frac{n-1}{n} ( 1-d \cdot \mathrm{e}^{2n t} ) , \end{equation*}
  where $d \in ( 0,1 )$ is a constant of integration.

  From the solution of $r_{1}$, we see that the maximal existence time $T=-
  \frac{\log  d}{2n}$. Hence $M_t$ converges to a great circle as $t \rightarrow T$.
  This completes the proof of Theorem \ref{theo2}.
\hfill\qedsymbol\\

\end{document}